\newcommand{\R}{\mathbb{R}}
\newcommand{\Q}{\mathbb{Q}}
\newcommand{\DD}{\mathcal{D}}
\newcommand{\Z}{\mathbb{Z}}
\newcommand{\N}{\mathbb{N}}
\newcommand{\LL}{\mathcal{L}}
\newcommand{\GG}{\mathcal{G}}
\newcommand{\F}{{\mathcal F}}
\newcommand{\po}{\partial}
\newcommand{\wto}{\rightharpoonup} 
\newcommand{\ve}{\varepsilon}
\newcommand{\la}{\langle}
\newcommand{\ra}{\rangle}
\newcommand{\loc}{{\text{\rm loc}}}
\newcommand{\X}{\times}
\renewcommand{\d}{\delta}
\renewcommand{\l}{\lambda}
\renewcommand{\a}{\alpha}
\renewcommand{\b}{\beta}
\newcommand{\s}{\sigma}
\newcommand{\g}{\gamma} 
\newcommand{\z}{\zeta}
\renewcommand{\k}{\kappa}
\newcommand{\sgn}{\text{\rm sgn}}
\newcommand{\om}{\omega}
\newcommand{\supp}{\text{\rm supp}\,}
\newcommand{\M}{{\mathcal M}}
\newcommand{\V}{{\mathcal V}}
\renewcommand{\div}{\text{\rm div}\,}
\renewcommand{\supp}{\text{\rm supp}\,}
\newcommand{\I}{\mathcal{I}}
\newcommand{\bff}{{\mathbf f}}
\newcommand{\bfA}{{\mathbf A}}
\newcommand{\A}{{\mathcal A}}
\newcommand{\bfq}{{\mathbf q}}
\newcommand{\bfR}{{\mathbf R}}
\newcommand{\BAP}{\operatorname{BAP}}
\newcommand{\AP}{\operatorname{AP}}
\newcommand{\SAP}{\operatorname{SAP}}
\newcommand{\GB}{\mathbb G}
\newcommand{\TT}{{\mathbb T}}
\newcommand{\fbf}{\mathbf{f}}
\newcommand{\Abf}{\mathbf{A}}
\newcommand{\Bbf}{\mathbf{B}}
\renewcommand{\S}{{\mathcal S}}
\newcommand{\mm}{{\mathfrak m}}
\newcommand{\Me}{\operatorname{M}}
\newcommand{\Sp}{\operatorname{Sp}}
\newcommand{\Gr}{\operatorname{Gr}}
\newcommand{\medint}{{\mbox{\vrule height3.5pt depth-2.8pt
          width4pt}\mkern-13mu\int\nolimits}}
\newcommand{\Medint}{\mkern12mu\mbox{\vrule height4pt
         depth-3.2pt
          width5pt}\mkern-16.5mu\int\nolimits}
\newcommand{\intl}{\int\limits}
\newcommand{\iintl}{\iint\limits}
\theoremstyle{plain}
\newtheorem{theorem}{Theorem}[section]
\newtheorem{lemma}{Lemma}[section]
\newtheorem{proposition}{Proposition}[section]
\theoremstyle{definition}
\newtheorem{definition}{Definition}[section]
\theoremstyle{remark}
\newtheorem{remark}{Remark}[section]
\numberwithin{equation}{section}
\begin{document}

\title[Decay of Almost Periodic Solutions]
{Asymptotic Decay of Besicovitch Almost Periodic Entropy Solutions to Anisotropic   Degenerate Parabolic-Hyperbolic Equations}
\author{Hermano Frid} 

\address{Instituto de Matem\'atica Pura e Aplicada - IMPA\\
         Estrada Dona Castorina, 110 \\
         Rio de Janeiro, RJ 22460-320, Brazil}
\email{hermano@impa.br}

\author{Yachun Li}

\address{School of Mathematical Sciences, MOE-LSC, and SHL-MAC,  Shanghai Jiao Tong University\\ Shanghai 200240, P.R.~China}

\email{ycli@sjtu.edu.cn}

\keywords{decay of entropy solutions, degenerate parabolic-hyperbolic equations, Besicovitch almost periodic solutions}
\subjclass[2010]{Primary 35K59; Secondary 35L65, 35K15.}

\date{}
\thanks{}

\begin{abstract}
We prove the well-posedness and the asymptotic decay to the mean value of Besicovitch almost periodic solutions to  nonlinear  anisotropic degenerate parabolic-hyperbolic equations. 

\end{abstract}

\maketitle

\section{Introduction}\label{S:1}

We address the problem of the decay to the mean-value of $L^\infty$ Besicovitch almost periodic solutions to nonlinear  anisotropic   degenerate parabolic-hyperbolic equations. 
Consider the  Cauchy problem 
\begin{align}
& \po_tu+\nabla\cdot \bff(u)=\nabla^2: \bfA(u),\qquad x\in\R^d,\quad t>0,\label{e1.1}\\
&u(0,x)=u_0, \quad x\in\R^d,\label{e1.2}
\end{align}
where $\bff=(f_1,\cdots,f_d)$, $\bfA(u)=(A_{ij}(u))_{i,j=1}^d$, with $f_i(u), A_{ij}(u):\R\to\R$ smooth functions. We denote $\nabla^2:\bfA(u):=\sum_{i,j=1}^d \po_{x_ix_j}^2A_{ij}(u)$.   $\bfA(u)$ is a symmetric matrix such that its derivative 
$\bfA'(u)=A(u)=(a_{ij}(u))_{i,j=1}^d$, $a_{ij}(u)=A_{ij}'(u)$, is a non-negative matrix. In particular,  we may write 
\begin{equation}\label{e1.3}
a_{ij}(u)=\sum_{k=1}^d \s_{ik}(u)\s_{jk}(u),
\end{equation}
with $\s_{ij}(u):\R\to\R$ smooth functions, that is,  $(\s_{ij}(u))_{ij=1}^d$ is the square root of $A(u)$.  

We assume to begin with that $u_0\in L^\infty(\R^d)$.

In this paper, we are concerned with the large-time behavior of entropy solutions of \eqref{e1.1}-\eqref{e1.2} with initial function $u_0$ satisfying
\begin{equation}\label{e1.5}
u_0\in L^\infty(\R^d)\cap \BAP(\R^d).
\end{equation}
Here, $\BAP(\R^d)$ denotes the space of the Besicovitch almost periodic functions (with exponent $p=1$), which can be defined as the completion of the space of trigonometric polynomials, i.e., finite sums $\sum_{\l}a_\l e^{2\pi i\l\cdot x}$ ($i=\sqrt{-1}$ is the purely imaginary unity) under the semi-norm
$$
N_1(g):= \limsup_{R\to\infty} \frac{1}{R^d}\int_{\I_R}|g(x)|\,dx,
$$
where, for $R>0$,
$$
\I_R:= \{x\in\R^d\,:\,|x|_\infty:=\max_{i=1,\cdots,d}|x_i|\le R/2\}.
$$
 We observe that the semi-norm $N_1$ is indeed a norm over the trigonometric polynomials, so the referred completion through it is a well defined Banach space. Equivalently, the space $\BAP(\R^d)$ is also the completion through $N_1$ of the space of uniform (or Bohr) almost periodic functions, $\AP(\R^d)$, which is defined as the closure in the $\sup$-norm of the trigonometric polynomials. 
 
We begin by stating the definition of entropy solution for \eqref{e1.1}-\eqref{e1.2}, which is  motivated by  \cite{CP}. 

\begin{definition}\label{D:1.1}  An entropy solution for \eqref{e1.1}-\eqref{e1.2}, with $u_0\in L^\infty(\R^d)$,  is a function $u(t,x)\in L^\infty((0,\infty)\X\R^d)$ such that 
\begin{enumerate}
\item[(i)] (Regularity) For any $R>0$ and any $k=1,\cdots, d$,   we have
\begin{equation}\label{e1.6}
\sum_{i=1}^d\po_{x_i}\b_{ik}(u) \in L_\loc^2((0,\infty)\X \I_{R}),\ \ \text{for $\b_{ik}(u)=\int^u\s_{ik}(v)\,dv$}.
\end{equation}

\item[(ii)] (Chain Rule)  For any function $\psi\in C(\R)$ and any $k=1,\cdots, d$, the following chain rule holds:
\begin{multline}\label{e1.8}
\sum_{i=1}^d\po_{x_i}\b_{ik}^\psi(u)=\psi(u)\sum_{i=1}^d\po_{x_i}\b_{ik}(u)\in L_\loc^2((0,\infty)\X \I_{R}),\\ \quad\text{for any $R>0$ and $(\b_{ik}^\psi)'=\psi\b_{ik}'$}.
\end{multline}
\item[(iii)] (Entropy Inequality) For any convex $C^2$ function $\eta:\R\to\R$, and ${\bfq}'(u)=\eta'(u)\bff(u)$, $r_{ij}'(u)=\eta'(u)a_{ij}(u)$, we have
\begin{equation}\label{e1.9}
\po_t\eta(u)+\nabla\cdot \bfq(u)-\sum_{ij=1}^d \po_{x_ix_j}^2r_{ij}(u)
\le -\eta''(u)\sum_{k=1}^d\left(\sum_{i=1}^d\po_{x_i}\b_{ik}(u)\right)^2.
\end{equation}
\item[(iv)] (Initial Condition) For any $R>0$,
\begin{equation}\label{e1.10}
\lim_{t\to0+}\intl_{\I_R}|u(t,x)-u_0(x)|\,dx=0,
\end{equation}
\end{enumerate}
\end{definition}


\begin{remark}\label{R:1.2} It is easy to verify that the chain rule \eqref{e1.8} guarantees, in particular, that the vector fields $\nabla\cdot \bfA(u)$, $\nabla\cdot \left(\sgn(u-k)(\bfA(u)-\bfA(k))\right)$, $k\in\R$, and, more generally, $\nabla\cdot \bfR(u)$, $\bfR(u)=(r_{ij}(u))_{i,j=1}^d$, $r_{ij}'(u)=\eta'(u)a_{ij}(u)$, for any smooth entropy $\eta$, belong to $L^2((0,\infty)\X \I_{R})$, for any $R>0$. 
\end{remark}

For any $g\in\BAP(\R^d)$, its mean value $\Me(g)$, defined by
$$
\Me(g):=\lim_{R\to\infty} R^{-d}\int_{\I_R} g(x)\,dx,
$$
exists (see, e.g., \cite{B}). The mean value $\Me(g)$ is also denoted by $\medint_{\R^d}g\,dx$.  Also, the  Bohr-Fourier coefficients of $g\in\BAP(\R^d)$ 
$$
a_\l= \Me(g e^{-2\pi i\l\cdot x}),
$$ 
are well defined and we have that the spectrum of $g$, defined by
$$
\Sp(g):=\{ \l\in\R^d \,:\, a_{\l}\ne 0\},
$$
is at most countable (see, e.g., \cite{B}). We denote by  $\Gr(g)$ the smallest additive subgroup of $\R^d$ containing $\Sp(g)$. we now state the main result of this paper.

\begin{theorem}\label{T:1.1}  For any $u_0\in L^\infty(\R^d)$, there exists a unique weak entropy solution $u(t,x)$ of \eqref{e1.1}-\eqref{e1.2}. If $u_0$ satisfies \eqref{e1.5}, then 
\begin{equation}\label{e1.12}
u \in  C([0,\infty),\BAP(\R^d))\bigcap L^\infty(\R_+^{d+1}). 
\end{equation}
Moreover, if \eqref{e1.1} satisfies the following  non-degeneracy condition, where $a(\xi):=\bff'(\xi):$ 
for any $\d>0$, 
\begin{equation}\label{e1.4'}
\sup_{|\tau|+|\k|\ge \d}\int_{|\xi|\le \|u_0\|_\infty} \frac{\ell\,d\xi}{\ell+|\tau+a(\xi)\cdot\k|^2+(\k^\top A(\xi)\k)^2}:=\om_\d(\ell)\underset{\ell\to0+}{\to}0,
\end{equation}
then,
\begin{equation}\label{e1.13}
\lim_{t\to+\infty}\Me(|u(t,\cdot)-\Me(u_0)|)=0.
\end{equation}
\end{theorem}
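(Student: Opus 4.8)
\emph{Proof strategy.} The argument proceeds in three stages — well-posedness on $\R^d$, propagation of Besicovitch almost periodicity, and the decay — the last being an adaptation to the Bohr compactification $\GB^d$ of $\R^d$ of the kinetic/velocity-averaging scheme of Chen--Perthame. For arbitrary $u_0\in L^\infty(\R^d)$ I would first establish well-posedness: existence of an entropy solution in the sense of Definition~\ref{D:1.1} by non-degenerate-parabolic (vanishing-viscosity) approximation, passing to the limit with the uniform bound $\|u^\e(t)\|_\infty\le\|u_0\|_\infty$ and the a priori $L^2$-estimate on $\sum_i\po_{x_i}\b_{ik}(u^\e)$; uniqueness and the $L^1_\loc$-contraction by the Kru\v{z}kov--Carrillo doubling-of-variables method applied to \eqref{e1.11}, as in \cite{CP,BK}. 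This makes the solution operator translation-covariant, order preserving, and nonexpansive in $L^1_\loc$, with $\|u(t,\cdot)\|_\infty\le\|u_0\|_\infty$. If moreover $u_0\in\AP(\R^d)$, then for each $\e>0$ the $\e$-almost periods of $u_0$ are relatively dense, and translation covariance together with the $L^1_\loc$-contraction turns an $\e$-almost period of $u_0$ into one of $u(t,\cdot)$ with a modulus uniform in $t$; hence $u(t,\cdot)\in\BAP(\R^d)$, and by density of $\AP$ in $\BAP$ and the contraction the same persists for $u_0\in L^\infty\cap\BAP$. Combined with \eqref{e1.10} and continuity in $L^1_\loc$, this yields \eqref{e1.12}. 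Equivalently, via the isometric identifications $\AP(\R^d)\cong C(\GB^d)$ and $\BAP(\R^d)\cong L^1(\GB^d,\mm)$ ($\mm$ the normalized Haar measure, so that $N_1$ becomes $\|\cdot\|_{L^1(\mm)}$, $\Me$ becomes $\int_{\GB^d}\cdot\,d\mm$, and the $\R^d$-translations extend to $\GB^d$), the solution lifts to an entropy solution $u\colon[0,\infty)\times\GB^d\to\R$, where the operators in \eqref{e1.1} are the infinitesimal generators of the coordinate $\R^d$-flows on $\GB^d$.

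Next comes the kinetic formulation on $\GB^d$. With $\chi(\xi,v):=\mathbf 1_{\{0<\xi<v\}}-\mathbf 1_{\{v<\xi<0\}}$, the family \eqref{e1.11} is equivalent to the assertion that $f(t,y,\xi):=\chi(\xi,u(t,y))$ satisfies, weakly on $[0,\infty)\times\GB^d\times\R$,
\begin{equation*}
\po_t f+a(\xi)\cdot\grad_y f-\sum_{i,j=1}^d a_{ij}(\xi)\,\po^2_{y_iy_j}f=\po_\xi\bigl(m+n\bigr),
\end{equation*}
with $m\ge0$ the entropy-defect measure and $n\ge0$ the measure carrying the parabolic dissipation $\sum_k\bigl(\sum_i\po_{y_i}\b_{ik}(u)\bigr)^2$; because $\mm(\GB^d)=1$, taking $\eta(v)=v^2$ in \eqref{e1.9} and integrating over $\GB^d$ (the divergence and second-order terms dropping out, by Remark~\ref{R:1.2}) gives the finite bound $\int_{[0,\infty)\times\GB^d\times\R}d(m+n)\le C(\|u_0\|_\infty)$. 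This finiteness — the analogue of the periodic case — is what makes the dissipation vanish under time translation: setting $u^T(t,\cdot):=u(T+t,\cdot)$, $f^T:=\chi(\xi,u^T)$, the associated defect has mass $\int_{[0,S]\times\GB^d\times\R}d(m^T+n^T)=\int_{[T,T+S]\times\GB^d\times\R}d(m+n)\to0$ as $T\to\infty$.

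Finally, the decay. Expanding functions on $\GB^d$ in the characters indexed by the dual group (which is $\R^d$ with the discrete topology), I would prove the velocity-averaging estimate mode by mode: the denominators produced are exactly $\ell+|\tau+a(\xi)\cdot\k|^2+(\k^\Tr A(\xi)\k)^2$, so the non-degeneracy \eqref{e1.4'} gives, uniformly in $T$, compactness of the averages $\int_\R f^T\,\varphi(\xi)\,d\xi$ in $L^1_\loc((0,\infty);L^1(\GB^d))$; together with the uniform $L^\infty$ bound, the $\chi$-structure, and the equicontinuity in $t$ furnished by the equation and the vanishing defect, this upgrades to relative compactness of $\{u^T\}$ in $C_\loc([0,\infty);L^1(\GB^d))$. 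Any limit $u^\infty$ solves the kinetic equation with zero right-hand side, hence satisfies every entropy equality; integrating over $\GB^d$ shows $\int_{\GB^d}u^\infty(t,\cdot)\,d\mm$ is constant, equal to $\int_{\GB^d}u_0\,d\mm=\Me(u_0)$, while applying the non-degeneracy to this defect-free equation (the rigidity step of Chen--Perthame) forces $u^\infty$ to be independent of $y\in\GB^d$; substituting back into \eqref{e1.1} gives $\po_t u^\infty=0$, so $u^\infty\equiv\Me(u_0)$. Since every subsequence of $\{u^T\}$ has a further subsequence with this same limit, $u^T\to\Me(u_0)$ in $C_\loc([0,\infty);L^1(\GB^d))$, which is precisely \eqref{e1.13}.

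The main obstacle, and the place where the Bohr-compact setting genuinely departs from Chen--Perthame, is the velocity-averaging lemma on $\GB^d$: since $\GB^d$ is not a manifold, the classical Fourier-multiplier proofs do not apply directly, and one must exploit that $\GB^d$ is a compact abelian group with dual $\R^d$ in the discrete topology, carrying out the harmonic-analysis estimates character-by-character so that the small-denominator quantity in \eqref{e1.4'} appears in exactly the required form; a related subtlety is re-deriving the rigidity of defect-free solutions — that a bounded, dissipation-free solution on $[0,\infty)\times\GB^d$ is spatially constant — in a framework where the spectrum lives in $\R^d$ rather than in a lattice.
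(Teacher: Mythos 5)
Your overall architecture (well-posedness, lifting to the Bohr compact, kinetic formulation, time translates with vanishing defect, rigidity of the defect-free limit) matches the paper's, but the two steps that carry the real weight are left unresolved, and one of them is routed through a tool that is both unnecessary and problematic. First, you propose to obtain relative compactness of the time translates $u^T$ from a velocity-averaging lemma on $\GB_d$, which you yourself flag as ``the main obstacle'' without proving it. The paper never uses velocity averaging for compactness: by translation covariance and the $L^1$-mean contraction (Proposition~\ref{P:1.1}, i.e.\ contraction in the $N_1$-seminorm, not merely the weighted local $L^1$ estimate), one has $\int_{\GB_d}|u(t,\om+\z)-u(t,\om)|\,d\mm\le\int_{\GB_d}|u_0(\om+\z)-u_0(\om)|\,d\mm$ uniformly in $t$ (see \eqref{e5.15}), and convolving with an approximate identity on the compact group turns this uniform translation modulus into relative compactness of $\{u(t,\cdot)\}_{t>0}$ in $L^1(\GB_d)$ and $\BAP^2(\R^d)$ (Lemma~\ref{L:3.1}, estimate \eqref{e4.26}). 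Your averaging route, besides being unproved, collides with the very feature that distinguishes the almost periodic case: the dual group of $\GB_d$ is $\R^d$ with the discrete topology, the spectrum of the solution may accumulate at $0$, and $\om_\d(\ell)$ in \eqref{e1.4'} is only controlled for each fixed $\d>0$, so no uniform gain is available over the low nonzero frequencies.

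Second, and for the same reason, the rigidity step as you state it does not close. The Chen--Perthame multiplier estimate requires $|\k|\ge\d_0>0$ on the frequencies being summed; in the periodic case this is automatic because nonzero lattice frequencies are bounded away from $0$, but here it fails for the full spectrum. You name this subtlety but do not resolve it. The paper's resolution is Lemma~\ref{L:5.2}(iv) (built on Lemma~\ref{L:3.2}): the relative compactness of $\{v(t,\cdot)\}_{t\in\R}$ in $L^2(\GB_d)$ --- again a consequence of the contraction --- yields, for each $\ve>0$, a single finite frequency set $F_\ve$ capturing all but $\ve$ of the $L^2$ mass uniformly in $t$; the non-degeneracy condition is then applied only on $2\pi F_\ve$, whose elements are bounded away from $0$ since the normalization $\Me(u_0)=0$ excludes the zero frequency. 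Without this device (or an equivalent one) your final contradiction argument cannot be run, so the proof of \eqref{e1.13} is genuinely incomplete.
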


\bigskip
\begin{remark}\label{R:1.3}  We remark that condition \eqref{e1.4'} is equivalent to the following condition:  
for any $(\tau,\k)\in\R^{d+1}$ with $\tau^2+|\k|^2=1$, 
\begin{equation}\label{e1.4}
\LL^1\{\xi\in\R\,:\,|\xi|\le\|u_0\|_\infty,\; \tau+a(\xi)\cdot\k=0,\;\k^\top A(\xi)\k=0\}=0. 
\end{equation}

Indeed, first we see that if \eqref{e1.4} does not hold, then, for some $(\tau,\k)$, with $\tau^2+|\k|^2=1$, $\LL^1\{\xi\in\R\,:\,|\xi|\le\|u_0\|_\infty,\; \tau+a(\xi)\cdot\k=0,\;\k^\top A(\xi)\k=0\}>0$. 
Therefore, for such $(\tau,\k)$, the integrand of the integral in \eqref{e1.4'} equals 1 in a fixed set of positive measure, for all $\ell>0$. Hence, \eqref{e1.4'} does not hold as well.  

Now, assume that \eqref{e1.4} holds. We first observe that the $\sup$ must be assumed for $|\tau|+|\k|=\d$, since the integrand decreases when $|\tau|+|\k|$ increases, which is easily seen by writing the integrand in terms of $\bar\tau=\tau/(|\tau|+|\k|)$, $\bar\k=\k/(|\tau|+|\k|)$ and $r=|\tau|+|\k|$. Let $I_\ell(\tau,\k)$ denote the integral in \eqref{e1.4'}. The functions $I_\ell(\tau,\k)$ are clearly continuous on $\S_\d^d:= \{(\tau,\k)\,:\, |\tau|+|\k|=\delta\}$. Moreover, condition \eqref{e1.4} implies that, for each $(\tau,\k)\in\S_\d^d$,  $I_\ell(\tau,\k)$ decreases to 0 as $\ell\to0+$. Therefore, Dini's theorem implies that $I_\ell\to0$, as $\ell\to0+$ uniformly on $\S_\d^d$, which implies \eqref{e1.4'}.   
\end{remark}

\bigskip
There is a large literature related with degenerate parabolic equations, being the first important contribution by  Vol'pert and Hudjaev in \cite{VH}. Uniqueness for the homogeneous Dirichlet problem, for the isotropic case, was only achieved  many years later by Carrillo in \cite{Ca}, using an extension of Kruzhkov's doubling of variables method \cite{Kr}. The result in \cite{Ca} was  extended to non-homogeneous Dirichlet data by Mascia, Porretta and Terracina in \cite{MPT}.   An $L^1$ theory for the Cauchy problem for anisotropic degenerate parabolic equations was established by Chen and Perthame \cite{CP}, based on the kinetic formulation (see \cite{PB}), and later also obtained using Kruzhkov's approach in \cite{BK,CK} (see also, \cite{FL}, \cite{KR}, \cite{LYW},  \cite{EJ} and the references therein). Decay of almost periodic solutions for general nonlinear systems of conservation laws of parabolic and hyperbolic types was first addressed in \cite{Fr0}, as an extension of the ideas put forth in \cite{CF0}. Only recently the problem of the decay of almost periodic solutions was retaken, specifically for scalar hyperbolic conservation laws, by Panov in \cite{Pv2}, where some elegant ideas were introduced to successfully extend the corresponding result in \cite{Fr0} to general bounded measurable  Besicovitch almost periodic initial functions. \ \\

Here we establish the well-posedness and decay of Besicovitch almost periodic entropy solutions of the anisotropic degenerate parabolic-hyperbolic equation \eqref{e1.1} extending the method introduced by Chen and Perthame in \cite{CP2}, which is based on the analysis of the sequence  $v^k(t,x):=u(t+k,x)$ of time translates of the entropy solution and its limits, as well as the corresponding kinetic functions and its limits. 
The extension of the method of \cite{CP2} developed in this paper consists  in upgrading the analysis framework  from the torus $\TT^d$, which is the compactification of $\R^d$ generated by the continuous periodic functions with a fixed periodic cell,  to the Bohr compact group, $\GB_d$ , which is the compactification of $\R^d$  induced by the space of Bohr almost periodic functions, $\AP(\R^d)$, according to a classical theorem of Stone (see, e.g., \cite{DS}). In the case of the hyperbolic conservation laws, the definition and well-posedness of the entropy solutions in $\GB_d$ was established in \cite{Pv2}, where it is shown the equivalence between the solutions in $\R^d$ and $\GB_d$; these facts are extended here to the context of anisotropic degenerate parabolic-hyperbolic equations.  We remark that the decay analysis carried out in \cite{Pv2}, based on an elegant idea of reducing the original problem to a problem with  a periodic initial data in a different euclidean space, does not use the formulation of the initial value problem on the Bohr compact.  Another  basic tool  used here, motivated by \cite{Pv2}, is the contraction of the $L^1$-mean distance between two entropy solutions, which was established in \cite{Pv2} in the hyperbolic case, and is easily extended here to the anisotropic degenerate parabolic-hyperbolic case. This contraction provides the compactness in the Besicovitch space equivalent to $L^1(\GB_d)$, which is the analog of the compactness in $L^1(\TT^d)$ provided by the contraction in $L^1$-distance  between periodic entropy solutions. We also introduce the kinetic formulation in $\GB_d$. We proceed to the asymptotic decay analysis totally on the Bohr compact, using both the equation and its kinetic formulation on $\GB_d$.    
After proving the compactness of the solutions in time with values in $L^1(\GB_d)$, based on the contraction of the distance in $L^1(\GB_d)$, the same compactness is obtained for the solutions of the kinetic equation satisfied by the limits of the translating sequence. We then show that only a finite number of terms in the generalized Fourier series of the limit kinetic function contribute significantly to its $L^2$-norm,  which allows us to adapt the last part of the proof in \cite{CP2}.  We remark that although the non-degeneracy \eqref{e1.4'} is formulated for all the continuum of frequencies $\k\in\R^d$, it is only used for a discrete subset of such frequencies, as is also the case in \cite{CP2}, \cite{Pv2} and \cite{Da2}.  
 \ \\

A brief description of the organization of the rest of this paper, whose main purpose is the proof of Theorem~\ref{T:1.1}, is as follows. In Section~\ref{S:2}, we start by proving a fundamental lemma establishing the contraction of the $L^1$-mean distance between any two entropy solutions of \eqref{e1.1}-\eqref{e1.2}, which extends the corresponding result in \cite{Pv2}. Then we state the existence, uniqueness, stability and monotonicity with respect to the initial data, which are by now standard, whose proofs we just outline briefly. We then establish the preservation of the space 
$\BAP(\R^d)$ and that the entropy solution $u(t,x)$ satisfies $u\in C([0,\infty);\BAP(\R^d))\cap L^\infty(\R^d)$. In Section~\ref{S:3}, we introduce the concept of entropy solution in $(0,\infty)\X\GB_d$ and translate the properties proved in the previous section in this new context. Finally, in Section~\ref{S:4}, we establish the decay of the Besicovitch almost periodic entropy solution by upgrading the method of Chen and Perthame, in \cite{CP2}, from the torus $\TT^d$ to the Bohr compact $\GB_d$.\ \\

\section{$L^1$-mean contraction, existence, uniqueness and \eqref{e1.12}}\label{S:2}

In this section we prove Theorem~\ref{T:1.1} through a number of auxiliary results and results that establish parts of its statement. 

We begin with a proposition which plays a  central role in the proof of Theorem~\ref{T:1.1}.    We will need the following  simple technical lemma of  \cite{Pv2}, to which we refer for the proof.

\begin{lemma} \label{L:2.1} Suppose that $u(x,y)\in L^\infty(\R^n\X\R^m)$, 
$$
E=\{x\in\R^n\,:\, \text{ $(x,y)$ is a Lebesgue point of $u(x,y)$ for a.e.\ $y\in\R^m$} \}.
$$
Then $E$ is a set of full measure and $x\in E$ is a common Lebesgue point of the functions $I(x)=\intl_{\R^m}u(x,y)\rho(y)\,dy$, for all $\rho\in L^1(\R^m)$.
\end{lemma}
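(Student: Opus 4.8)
The plan is to handle the two assertions separately; the first is a routine consequence of the Lebesgue differentiation theorem and Fubini, while the second needs a short but slightly subtle argument.

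\emph{Full measure of $E$.} Since $u\in L^1_{\loc}(\R^n\times\R^m)$, the Lebesgue differentiation theorem produces a full-measure set $L\subseteq\R^{n+m}$ of Lebesgue points of $u$. Writing $Z=\R^{n+m}\setminus L$, Fubini--Tonelli gives $\int_{\R^n}\LL^m(\{y:(x,y)\in Z\})\,dx=\LL^{n+m}(Z)=0$, so for a.e.\ $x$ the slice $\{y:(x,y)\in Z\}$ is $\LL^m$-null; every such $x$ lies in $E$, so $\R^n\setminus E$ is $\LL^n$-null.

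\emph{Reduction.} I would fix $x_0\in E$ and denote by $\bar u(y)$ the Lebesgue value of $u$ at $(x_0,y)$, which is defined for a.e.\ $y$; thus $\bar u$ is measurable, $|\bar u|\le\|u\|_\infty$, and $|B_s^{n+m}|^{-1}\int_{B_s^{n+m}(x_0,y)}|u-\bar u(y)|\to 0$ as $s\to 0^+$ for a.e.\ $y$. Given $\rho\in L^1(\R^m)$, set $I(x)=\int_{\R^m}u(x,y)\rho(y)\,dy$ (which lies in $L^\infty(\R^n)$) and $\bar I=\int_{\R^m}\bar u(y)\rho(y)\,dy$, the candidate Lebesgue value of $I$ at $x_0$; writing $\Psi_s(y)=|B_s^n|^{-1}\int_{B_s^n(x_0)}|u(x,y)-\bar u(y)|\,dx$, so that $0\le\Psi_s\le 2\|u\|_\infty$, the triangle inequality and Fubini give
\[
\frac{1}{|B_s^n|}\int_{B_s^n(x_0)}|I(x)-\bar I|\,dx\ \le\ \int_{\R^m}|\rho(y)|\,\Psi_s(y)\,dy,
\]
so everything reduces to proving $\int_{\R^m}|\rho|\,\Psi_s\to 0$ as $s\to 0^+$.

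\emph{The main obstacle and the averaged estimate.} The delicate point is that $x_0\in E$ controls only full-dimensional ball averages over $\R^{n+m}$, and does \emph{not} by itself force $\Psi_s(y)\to 0$ for a.e.\ fixed $y$, the fibers $\{x_0\}\times\R^m$ being $\LL^{n+m}$-null. What it does give, by an elementary volume comparison, is a $y$-averaged version: restricting the integral over $B_s^{n+m}(x_0,y)$ to the cylinder $B_{\kappa s}^n(x_0)\times B_{s/2}^m(y)$, which lies in $B_s^{n+m}(x_0,y)$ once $\kappa\le\sqrt3/2$, and using $|u(x,y')-\bar u(y)|\ge|u(x,y')-\bar u(y')|-|\bar u(y')-\bar u(y)|$, yields a geometric constant $c>0$ with
\[
\frac{1}{|B_s^{n+m}|}\int_{B_s^{n+m}(x_0,y)}|u-\bar u(y)|\ \ge\ c\Bigl(\frac{1}{|B_{s/2}^m|}\int_{B_{s/2}^m(y)}\!\Psi_{\kappa s}\ -\ \frac{1}{|B_{s/2}^m|}\int_{B_{s/2}^m(y)}\!|\bar u-\bar u(y)|\Bigr).
\]
Letting $s\to 0^+$ and invoking, for a.e.\ $y$, the Lebesgue-point property of $u$ at $(x_0,y)$ together with the Lebesgue differentiation theorem for $\bar u\in L^\infty(\R^m)$, one gets $|B_r^m|^{-1}\int_{B_r^m(y)}\Psi_{\kappa' r}\to 0$ for a.e.\ $y$, with $\kappa'=2\kappa$.

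\emph{Passing to $\rho$.} To turn this averaged smallness into $\int|\rho|\,\Psi_{\kappa' r}\to 0$ I would pair it against $\rho$ via a Fubini swap with the approximate identity $\phi_r=\mathbf 1_{B_r^m}/|B_r^m|$: since $\phi_r$ is even, $\int_{\R^m}|\rho|\,(\phi_r*\Psi_{\kappa' r})=\int_{\R^m}\Psi_{\kappa' r}\,(\phi_r*|\rho|)$, and the left-hand side tends to $0$ by dominated convergence (its integrand is $\le 2\|u\|_\infty|\rho|\in L^1$ and tends to $0$ a.e.\ by the previous step), while
\[
\Bigl|\int_{\R^m}\Psi_{\kappa' r}\,|\rho|-\int_{\R^m}\Psi_{\kappa' r}\,(\phi_r*|\rho|)\Bigr|\ \le\ 2\|u\|_\infty\,\bigl\||\rho|-\phi_r*|\rho|\bigr\|_{L^1(\R^m)}\ \longrightarrow\ 0
\]
because $\{\phi_r\}$ is an approximate identity and $|\rho|\in L^1(\R^m)$. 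Combining these, $\int|\rho|\,\Psi_{\kappa' r}\to 0$, hence $|B_{\kappa' r}^n|^{-1}\int_{B_{\kappa' r}^n(x_0)}|I-\bar I|\to 0$; as $\kappa'>0$ is a fixed constant, this says precisely that $x_0$ is a Lebesgue point of $I$ (with value $\bar I$) for every $\rho\in L^1(\R^m)$. The whole argument thus hinges on the averaged estimate of the preceding paragraph and on this mollification trick, which is exactly what allows one to pair a merely-$L^\infty$, only-averagely-small family $\Psi_{\kappa' r}$ against an arbitrary $L^1$ weight.
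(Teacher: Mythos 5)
Your proof is correct. Note that the paper itself does not prove this lemma at all --- it cites Panov \cite{Pv2} for the proof --- so there is no in-paper argument to compare against; I can only assess your proposal on its own terms. The first part (full measure of $E$) is the routine Lebesgue--Fubini argument and is fine. For the second part you correctly identify the genuine difficulty, namely that membership in $E$ controls only $(n+m)$-dimensional ball averages and says nothing directly about the slice averages $\Psi_s(y)$, and your two devices for overcoming it both check out: the cylinder $B_{\kappa s}^n(x_0)\times B_{s/2}^m(y)\subset B_s^{n+m}(x_0,y)$ with $\kappa\le\sqrt3/2$ gives a scale-invariant constant $c=\kappa^n2^{-m}|B_1^n||B_1^m|/|B_1^{n+m}|$ and hence the averaged smallness of $\Psi_{\kappa s}$ for a.e.\ $y$ (using the Lebesgue differentiation theorem for $\bar u\in L^\infty(\R^m)$ to absorb the error term $|\bar u(y')-\bar u(y)|$); and the duality swap $\int|\rho|(\phi_r*\Psi_{\kappa'r})=\int\Psi_{\kappa'r}(\phi_r*|\rho|)$, combined with dominated convergence on one side and $\||\rho|-\phi_r*|\rho|\|_{L^1}\to0$ with the uniform bound $\Psi\le2\|u\|_\infty$ on the other, legitimately converts the averaged smallness into $\int|\rho|\Psi_{\kappa'r}\to0$. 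Since $s=\kappa'r$ sweeps all small radii, the conclusion that $x_0$ is a Lebesgue point of $I$ with value $\bar I=\int\bar u\rho$ follows, uniformly in the choice of $\rho\in L^1(\R^m)$ because $E$ does not depend on $\rho$. This is essentially the standard route (and, as far as I can tell, the same circle of ideas as in \cite{Pv2}), so no gap to report.
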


\begin{proposition}[$L^1$-mean contraction] \label{P:1.1}  Let $u(t,x),v(t,x) \in L^\infty(\R_+^{d+1})$ be two entropy solutions of \eqref{e1.1}-\eqref{e1.2}, with initial data $u_0,v_0\in L^\infty(\R^d)$. Then for a.e.\ $0< t_0<t_1$ 
\begin{equation}\label{e2.1}
N_1(u(t_1,\cdot)-v(t_1,\cdot))\le N_1(u(t_0,\cdot)-v(t_0,\cdot)),
\end{equation}
and also for a.e.\ $t>0$, 
\begin{equation}\label{e2.2}
N_1(u(t,\cdot)-v(t,\cdot))\le N_1(u_0-v_0),
\end{equation}
\end{proposition}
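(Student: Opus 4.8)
The plan is to adapt Kruzhkov's doubling-of-variables technique to the Besicovitch-mean setting, following the strategy of Panov in \cite{Pv2}. I would start from the Kato-type inequality that follows from comparing the entropy inequalities for $u$ and $v$: using Remark~\ref{R:1.1} and the standard doubling argument in the $x$ and $t$ variables (with the degenerate-diffusion terms handled exactly as in \cite{CP,BK}, where the chain rule \eqref{e1.8} together with Remark~\ref{R:1.2} ensures the parabolic defect terms have the right sign and the cross terms cancel), one obtains that for all nonnegative test functions $\vphi\in C_c^\infty((0,\infty)\times\R^d)$,
\begin{equation*}
\iint_{\R_+^{d+1}}\Bigl(|u-v|\,\po_t\vphi+\sgn(u-v)(\bff(u)-\bff(v))\cdot\nabla\vphi+\sgn(u-v)(\bfA(u)-\bfA(v)):\nabla^2\vphi\Bigr)\,dx\,dt\ge0.
\end{equation*}
This is the entropy-solution analog of the classical Kruzhkov inequality; I would quote it as known, since the excerpt already cites \cite{CP,BK,CK} for precisely this machinery.

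Next I would localize in $x$ by choosing $\vphi(t,x)=\psi(t)\chi_R(x)$, where $\psi\in C_c^\infty((0,\infty))$ and $\chi_R$ is a smooth cutoff equal to $1$ on $\I_R$, supported in $\I_{R+1}$, with $|\nabla\chi_R|\le C$ and $|\nabla^2\chi_R|\le C$ uniformly in $R$. Plugging this in and using the $L^\infty$ bounds on $u,v,\bff,\bfA$ together with Remark~\ref{R:1.2} (so that the diffusion flux difference is genuinely $L^2_{\loc}$, hence controlled), the terms involving $\nabla\chi_R$ and $\nabla^2\chi_R$ are supported in $\I_{R+1}\setminus\I_R$, a shell of volume $O(R^{d-1})$. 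Dividing by $R^d$ and letting $R\to\infty$, these error terms vanish (this is exactly where the Besicovitch mean, rather than the $L^1$ norm, enters, and where the shell having lower-dimensional growth is essential). Set $W(t):=N_1(u(t,\cdot)-v(t,\cdot))$; Lemma~\ref{L:2.1} guarantees that for a.e.\ $t$ this mean is well-behaved and that $t\mapsto \medint|u(t,\cdot)-v(t,\cdot)|\,dx$ along the averaging sequences has the requisite Lebesgue-point structure, so one can pass to the limit to conclude $\int \psi'(t)\,W(t)\,dt\ge0$ for all nonnegative $\psi\in C_c^\infty$ with nonpositive... — more precisely, $W$ is (essentially) non-increasing, giving \eqref{e2.1} for a.e.\ $0<t_0<t_1$.

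Finally, to get \eqref{e2.2} I would use the initial condition (iv) from Definition~\ref{D:1.1}: \eqref{e1.10} gives $\int_{\I_R}|u(t,x)-u_0(x)|\,dx\to0$ as $t\to0+$ for each fixed $R$, but to pass to $N_1$ one needs this convergence to survive division by $R^d$ and the $\limsup$ in $R$; here I would invoke an interchange of limits justified by the $L^\infty$ bound plus the monotonicity of $W$ already established, so that $\lim_{t\to0+}W(t)\le N_1(u_0-v_0)$, and then combine with \eqref{e2.1}. The main obstacle I anticipate is not the doubling argument itself (that is standard for these equations) but the careful justification of the two limit interchanges — letting $R\to\infty$ inside the time-integrated inequality, and sending $t\to0+$ against the $\limsup_R$ defining $N_1$ — both of which rely crucially on Lemma~\ref{L:2.1} and on the lower-dimensional volume of the cutoff shell; controlling the degenerate second-order term $\sgn(u-v)(\bfA(u)-\bfA(v)):\nabla^2\chi_R$ on that shell, using only the $L^2_{\loc}$ regularity of the diffusion flux rather than any pointwise gradient bound, is the delicate point.
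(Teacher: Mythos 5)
Your overall strategy is the paper's: start from the Kato inequality obtained by doubling of variables (the paper's \eqref{e2.3}, quoted from \cite{Ca,BK}), localize with a spatial cutoff, normalize by $R^d$, kill the flux terms in the limit $R\to\infty$, use Lemma~\ref{L:2.1} to make sense of the inequality at a.e.\ pair of times, and handle \eqref{e2.2} by sending $t_0\to0+$ via \eqref{e1.10} at fixed $R$ first. Your cutoff is a fixed-width shell $\I_{R+1}\setminus\I_R$ rather than the paper's dilated profile $g(x/R)$ supported in $\I_{kR}$; both give $O(1/R)$ errors, and yours even avoids the paper's final $k\to1+$ step. Also, your worry about the second-order term is misplaced: $\sgn(u-v)(\bfA(u)-\bfA(v))$ contains no derivatives of $u$ or $v$ and is bounded in $L^\infty$, so it is controlled by the sup norm times the shell volume; the $L^2_\loc$ regularity of the diffusion flux is needed only to derive \eqref{e2.3} itself.

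The one step that would fail as written is the order of limits in the middle of your argument. You propose to let $R\to\infty$ inside $\int\psi'(t)\,I_R(t)\,dt$ and conclude $\int\psi'(t)\,W(t)\,dt\ge0$ with $W(t)=N_1(u(t,\cdot)-v(t,\cdot))$. But $N_1$ is defined as a $\limsup$, and for merely $L^\infty$ data (no almost periodicity is assumed in Proposition~\ref{P:1.1}) the averages $I_R(t)$ need not converge as $R\to\infty$; moreover $\psi'$ changes sign, so you cannot push a $\limsup$ through the time integral even with the uniform bound. The repair is exactly the paper's ordering: first take $\psi=\chi_\nu$ concentrating at two Lebesgue points $t_0<t_1$ of $t\mapsto I_R(t)$ (this is what Lemma~\ref{L:2.1} provides, uniformly over the cutoffs), obtaining the pointwise-in-time inequality $I_R(t_1)\le I_R(t_0)+O(1/R)$ for each fixed $R$, and only then take $\limsup_{R\to\infty}$ of both sides, which respects an inequality between two families plus a vanishing error. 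With that reordering your argument closes; everything else you wrote, including the treatment of the initial layer, matches the paper.
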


\begin{proof} The proof is a slight adaptation of the one of proposition~1.3 in \cite{Pv2}. We first recall that by using the doubling of variables method of Kruzhkov \cite{Kr}, as adapted by Carrillo \cite{Ca} to the isotropic degenerate parabolic case and \cite{BK} to the anisotropic one, we obtain
\begin{equation}\label{e2.3}
|u-v|_t+\nabla\cdot \sgn(u-v)(\bff(u)-\bff(v))\le \sum_{i,j=1}^d \po_{x_i x_j}^2 \sgn(u-v)(A_{ij}(u)-A_{ij}(v))
\end{equation}
in the sense of distributions in $\R_+^{d+1}$. As usual, we define a sequence approximating the indicator function of the interval $(t_0,t_1]$ , by setting for $\nu\in\N$,
$$
\d_\nu(s)=\nu\s(\nu s),\quad \theta_\nu(t)=\int_0^t\d_\nu(s)\,ds=\int_0^{\nu t}\s(s)\,ds,
$$
where $\s\in C_c^\infty(\R)$, $\supp \s\subset [0,1]$, $\s\ge0$, $\int_{\R}\s(s)\,ds=1$.   We see that $\d_\nu(s)$ converges to the Dirac measure in the sense of distributions in 
$\R$ while $\theta_\nu(t)$ converges everywhere to the Heaviside function. For $t_1>t_0>0$, if $\chi_\nu(t)=\theta_\nu(t-t_0)-\theta_\nu(t-t_1)$, then $\chi_\nu\in C_c^\infty(\R_+)$,
$0\le\chi_\nu\le 1$, and the sequence $\chi_\nu(t)$ converges everywhere, as $\nu\to\infty$, to the indicator function of the interval $(t_0,t_1]$. Let us take $g\in C_c^\infty(\R^d)$, satisfying $0\le g\le 1$, $g(y)\equiv 1$ in the cube $\I_1$, $g(y)\equiv 0$ outside the cube $\I_k$, with $k>1$. We apply \eqref{e2.3} to the test function $\varphi=R^{-d}\chi_\nu(t)g(x/R)$, for $R>0$. We then get
\begin{multline}\label{e2.4}
\int_0^\infty\bigl(R^{-d}\intl_{\R^d}|u(t,x)-v(t,x)|g(x/R)\,dx\bigr)(\d_\nu(t-t_0)-\d_\nu(t-t_1))\,dt\\
+R^{-d-1}\iint_{\R_+^{d+1}}\sgn(u-v)(\bff(u)-\bff(v))\cdot\nabla_yg(x/R)\chi_\nu(t)\,dx\,dt\\
+R^{-d-2}\sum_{i,j=1}^d\iint_{R_+^{d+1}}\sgn(u-v)( A_{ij}(u)-A_{ij}(v))\po_{y_iy_j}^2g(x/R)\chi_\nu(t)\,dx\,dt\ge0.
\end{multline}
Define
$$
F=\{t>0\,:\, \text{$(t,x)$ is a Lebesgue point of $|u(t,x)-v(t,x)|$ for a.e.\ $x\in\R^d$} \}.
$$
As a consequence of Fubini's theorem, $F$ is a set of full Lebesgue measure and by Lemma~\ref{L:2.1} each $t\in F$ is a Lebesgue point of the functions
$$
I_R(t)=R^{-d}\int_{\R^d}|u(t,x)-v(t,x)| g(x/R)\,dx,
$$
for all $R>0$ and all $g\in C_c(\R)$. Now we assume $t_0,t_1\in F$ and take the limit as $\nu\to\infty$ in \eqref{e2.4} to get
\begin{multline}\label{e2.5}
I_R(t_1)\le I_R(t_0)+ R^{-d-1}\iintl_{(t_0,t_1)\X\R^d}\sgn(u-v)(\bff(u)-\bff(v))\cdot\nabla_y g(x/R)\,dx\,dt\\
+R^{-d-2}\sum_{i,j=1}^d\iintl_{(t_0,t_1)\X\R^d}\sgn(u-v) ( A_{ij}(u)-A_{ij}(v))\po_{y_iy_j}^2 g(x/R)\,dx\,dt.
\end{multline}
Now, we have
\begin{multline}\label{e2.6}
R^{-d-1}\bigl |\iintl_{(t_0,t_1)\X\R^d}\sgn(u-v)(\bff(u)-\bff(v))\cdot \nabla_y g(x/R)\,dx\,dt\bigr|\\
\le R^{-1} \|\bff(u)-\bff(v)\|_\infty\iintl_{(t_0,t_1)\X\R^d}|\nabla_y g(y)|\,dy\,dt\to 0,\quad \text{as $R\to\infty$}.
\end{multline}
Also, we have
\begin{multline}\label{e2.7}
R^{-d-2}\left\vert \sum_{i,j=1}^d\iint_{R_+^{d+1}}\sgn(u-v) ( A_{ij}(u)-A_{ij}(v))\po_{y_iy_j}^2g(x/R)\chi_\nu(t)\,dx\,dt\right\vert \\ 
\le CR^{-2}\|\bfA(u)-\bfA(v)\|_\infty\iintl_{(t_0,t_1)\X\R^d}|\nabla_y^2 g(y)|\,dy\,dt\to 0.
\end{multline}
 On the other hand, we have
 $$
 N_1(u(t,\cdot)-v(t,\cdot))\le \limsup_{R\to\infty} I_R(t)\le k^d N_1(u(t,\cdot)-v(t,\cdot)),
 $$
 so taking the limit as $R\to\infty$ in \eqref{e2.5}, for $t_0, t_1\in F$, $t_0<t_1$,   we get 
 $$
 N_1(u(t_1,\cdot)-v(t_1,\cdot))\le k^{d} N_1(u(t_0,\cdot)-v(t_0,\cdot)),
 $$
 and since $k>1$ is arbitrary we can make $k\to 1+$ to get the desired result. Finally, for $t_0=0$, we use \eqref{e1.10} to send $t_0\to0+$ in \eqref{e2.5} and proceed exactly as we have just done. 
  
\end{proof}

\begin{lemma}[Uniqueness]\label{L:2.2} The problem \eqref{e1.1}-\eqref{e1.2} has at most one  entropy solution.
\end{lemma}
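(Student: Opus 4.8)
The plan is to deduce uniqueness directly from the $L^1$-mean contraction estimate \eqref{e2.2} of Proposition~\ref{P:1.1}. Suppose $u(t,x)$ and $v(t,x)$ are two entropy solutions of \eqref{e1.1}-\eqref{e1.2} sharing the same initial datum $u_0 = v_0 \in L^\infty(\R^d)$. Applying \eqref{e2.2} with this common initial datum gives $N_1(u(t,\cdot) - v(t,\cdot)) \le N_1(u_0 - v_0) = N_1(0) = 0$ for a.e.\ $t>0$. Since $N_1$ is a seminorm that vanishes precisely on functions that are null in the Besicovitch sense, this tells us that for a.e.\ $t$ the spatial mean of $|u(t,\cdot)-v(t,\cdot)|$ over $\I_R$ tends to $0$ as $R\to\infty$; but by itself this is a global averaging statement and does not immediately force $u=v$ pointwise a.e.

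To upgrade to genuine a.e.\ equality I would instead apply the contraction estimate \emph{locally in space}, exactly as is done inside the proof of Proposition~\ref{P:1.1} before the final passage $R\to\infty$. Concretely, returning to inequality \eqref{e2.5} with $v_0 = u_0$, and now keeping $R$ fixed rather than sending it to infinity, one has for a.e.\ $0<t_0<t_1$ (and, via \eqref{e1.10}, for $t_0=0$) that
\begin{equation*}
R^{-d}\intl_{\R^d}|u(t_1,x)-v(t_1,x)|\,g(x/R)\,dx \le \text{(terms of order $R^{-1}$ and $R^{-2}$)},
\end{equation*}
where the right-hand side involves $\|\bff(u)-\bff(v)\|_\infty$ and $\|\bfA(u)-\bfA(v)\|_\infty$ times fixed integrals of $|\nabla_y g|$ and $|\nabla_y^2 g|$ over $(0,t_1)\times\R^d$ and vanishes as $R\to\infty$. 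Multiplying through by $R^d$, the left side is $\intl_{\I_R}|u(t_1,x)-v(t_1,x)|\,dx$ (using $g\equiv 1$ on $\I_1$, $g$ supported in $\I_k$) bounded above by $C\,R^{d-1}$, which when divided by $R^d$ again yields $N_1 = 0$. The cleaner route: fix any ball $B$, choose $R$ large enough that $B\subset R\I_1$, and observe the left side dominates $\intl_B |u(t_1,x)-v(t_1,x)|\,dx$, while the right side is independent of how $B$ sits and tends to $0$ as $R\to\infty$; hence $\intl_B |u(t_1,\cdot)-v(t_1,\cdot)|\,dx = 0$ for every ball $B$ and a.e.\ $t_1>0$, so $u(t_1,\cdot)=v(t_1,\cdot)$ a.e., and then $u=v$ a.e.\ in $\R_+^{d+1}$ by Fubini.

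The main obstacle, such as it is, is bookkeeping rather than conceptual: one must make sure that the growth of the error terms in \eqref{e2.5} is genuinely sublinear in $R^d$, i.e.\ that the integrals $\iintl_{(0,t_1)\times\R^d}|\nabla_y g(y)|\,dy\,dt$ and $\iintl_{(0,t_1)\times\R^d}|\nabla_y^2 g(y)|\,dy\,dt$ are finite constants (which they are, since $g\in C_0^\infty$ and after the change of variables $y=x/R$ the domain of integration becomes $(0,t_1)\times\supp g$, independent of $R$), and that one may legitimately take $t_0\to 0+$ using the initial-condition clause \eqref{e1.10}. All of this is already carried out in the proof of Proposition~\ref{P:1.1}, so the proof of the lemma is a short remark: \emph{apply Proposition~\ref{P:1.1} with $u_0=v_0$, and read off \eqref{e2.5} with $R$ large to conclude $u=v$ a.e.}
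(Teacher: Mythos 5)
There is a genuine gap, and it sits exactly at the point you flag as ``the cleaner route.'' After sending $t_0\to0$ in \eqref{e2.5} with $u_0=v_0$, what you actually obtain is
\begin{equation*}
R^{-d}\intl_{\R^d}|u(t_1,x)-v(t_1,x)|\,g(x/R)\,dx \;\le\; C_1R^{-1}+C_2R^{-2},
\end{equation*}
so that for a fixed ball $B\subset \I_R$ the best you can extract is $\intl_B|u(t_1,x)-v(t_1,x)|\,dx\le C\,R^{d-1}$. The right-hand side does \emph{not} tend to $0$ as $R\to\infty$; it blows up for $d\ge2$ and stays bounded away from $0$ for $d=1$. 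The factor $R^{-d}$ is attached to the left side of \eqref{e2.5}, so dividing out by it destroys the smallness of the error terms, and you only recover the statement $N_1(u(t_1,\cdot)-v(t_1,\cdot))=0$ that you started from. This is not a bookkeeping issue but a structural one: $N_1$ is a seminorm whose kernel contains every compactly supported bounded function, so no contraction estimate phrased in terms of $N_1$ (nor any rescaled localization of it) can distinguish two entropy solutions that differ, say, on a set of finite measure. Proposition~\ref{P:1.1} simply does not carry enough information to prove uniqueness in $L^\infty(\R^d)$.

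The paper's proof takes a different and genuinely necessary route: it tests the Kruzhkov--Carrillo inequality \eqref{e2.8} against the \emph{strictly positive} exponential weight $\rho(x)=e^{-\sqrt{1+|x|^2}}$, which satisfies $\sum_i|\po_{x_i}\rho|+\sum_{i,j}|\po_{x_ix_j}^2\rho|\le C\rho$. This turns the flux and diffusion terms into quantities controlled by $\intl_{\R^d}|u-v|\rho\,dx$ itself, and Gronwall then yields $\intl_{\R^d}|u(t,x)-v(t,x)|\rho(x)\,dx\le e^{\tilde Ct}\intl_{\R^d}|u_0-v_0|\rho\,dx=0$. Because $\rho>0$ everywhere, this forces $u=v$ a.e. If you want to salvage your plan, replace the cut-off $g(x/R)$ by such a weight; the averaging-over-large-cubes strategy cannot be made to work.
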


\begin{proof}  The proof follows through standard arguments ({\em cf.}, e.g., \cite{VH}). So, let $u,v\in L^\infty(\R_+^{d+1})$ be two weak entropy solutions. As in Proposition~\ref{P:1.1}, by using the doubling of variables method of Kruzhkov \cite{Kr}, as adapted by Carrillo \cite{Ca} to the isotropic degenerate parabolic case and \cite{BK} to the anisotropic one, we obtain
\begin{multline}\label{e2.8}
\iintl_{\R_+^{d+1}}\{|u-v|\phi_t+ \sgn(u-v)(\bff(u)-\bff(v))\cdot\nabla\phi\\+ \sum_{i,j=1}^d  \sgn(u-v)(A_{ij}(u)-A_{ij}(v))\po_{x_ix_j}^2\phi\}\,dx\,dt \ge0,
\end{multline}
for all $0\le\phi\in C_c^\infty(\R_+^{d+1})$. We take $\phi(t,x)=\rho(x)\chi_\nu(t)$, where $\rho(x)=e^{-\sqrt{1+|x|^2}}$ and $\chi_\nu$ is as in the proof of Proposition~\ref{P:1.1}.
We observe that 
$$
\sum_{i=1}^d|\po_{x_i}\rho(x)|+ \sum_{i,j=1}^d|\po_{x_i x_j}^2\rho(x)|\le C\rho(x),
$$
for some constant $C>0$ depending only on $d$. Hence, making $\nu\to\infty$, we arrive at
\begin{multline*}
\intl_{\R^d}|u(t_1,x)-v(t_1,x)|\rho(x)\,dx\le \int_{\R^d}|u(t_0,x)-v(t_0,x)|\rho(x)\,dx\\+\tilde C \int_{t_0}^{t_1}\int_{\R^d}|u(s,x)-v(s,x)|\rho(x)\,dx\,dt,
\end{multline*}
for a.e. $0<t_0<t_1$, for some $\tilde C>0$ depending only on $\bff,A$ and the dimension $d$. Therefore, using Gronwall and \eqref{e1.10}, we conclude
\begin{equation}\label{e2.9}
\intl_{\R^d}|u(t,x)-v(t,x)|\rho(x)\,dx\le e^{\tilde C t}\int_{\R^d}|u_0(x)-v_0(x)|\rho(x)\,dx,
\end{equation}
which gives the desired result.
 
\end{proof}
 
 Observing that in the same way we got \eqref{e2.9} from \eqref{e2.8}, we may get
\begin{equation}\label{e2.9'}
\intl_{\R^d}(u(t,x)-v(t,x))_+\rho(x)\,dx\le e^{\tilde C t}\int_{\R^d}(u_0(x)-v_0(x))_+\rho(x)\,dx,
\end{equation}  
from
\begin{multline}\label{e2.8'}
\iintl_{\R_+^{d+1}}\{(u-v)_+\phi_t+ \sgn(u-v)_+(\bff(u)-\bff(v))\cdot\nabla\phi\\+ \sum_{i,j=1}^d  \sgn(u-v)_+(A_{ij}(u)-A_{ij}(v))\po_{x_ix_j}^2\phi\}\,dx\,dt \ge0,
\end{multline}
where $(u-v)_+=\max\{0,u-v\}$ and $\sgn(u-v)_+=H(u-v)$ where $H(s)$ is the Heaviside function. Taking $v=k$, with $k>\|u_0\|_\infty$, and then reversing the roles of $u$ and $v$, making $u=k$ and $v=u$, with $k<-\|u_0\|_\infty$, we deduce that 
\begin{equation}\label{e2.max}
|u(t,x)|\le \|u_0\|_\infty,\quad \text{for a.e.\ $(t,x)\in\R_+\X\R^d$}.
\end{equation}

\begin{lemma}[Existence]\label{L:2.3} There exists an entropy solution to the problem \eqref{e1.1}-\eqref{e1.2}.
\end{lemma}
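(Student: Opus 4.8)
The plan is to construct the entropy solution by the classical vanishing viscosity method, carried out first for smooth, compactly supported initial data, and then extended to arbitrary $u_0\in L^\infty(\R^d)$ by means of the contraction estimate \eqref{e2.9} already at our disposal. \textbf{Regularization and uniform estimates.} Fix $w_0\in C_0^\infty(\R^d)$ with $\|w_0\|_\infty\le M$. For $\delta\in(0,1]$ I would solve the uniformly parabolic problem $\po_t u+\nabla\cdot\bff(u)=\nabla^2:\bfA(u)+\delta\,\Delta u$, $u(0,\cdot)=w_0$; since the diffusion matrix $A(u)+\delta I\ge\delta I>0$ and all the data are smooth, classical quasilinear parabolic theory ({\em cf.}~\cite{VH}) gives a unique global bounded classical solution $u^\delta$, with $\|u^\delta\|_{L^\infty(\R_+^{d+1})}\le M$ by the maximum principle. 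Multiplying the equation by $\eta'(u^\delta)$ for a $C^2$ convex $\eta$, localizing with a spatial cutoff $\phi_R$ ($\phi_R\equiv1$ on $\I_R$, supported in $\I_{2R}$), and using the fundamental theorem of calculus in $t$ gives, uniformly in $\delta$,
\[
\sum_{k=1}^d\Bigl\|\sum_{i=1}^d\po_{x_i}\beta_{ik}^{\psi}(u^\delta)\Bigr\|_{L^2((0,T)\X\I_R)}^2+\delta\,\|\nabla u^\delta\|_{L^2((0,T)\X\I_R)}^2\le C(T,R,\eta,M),
\]
for $\psi=\sqrt{\eta''}$ (in particular, with $\eta(\xi)=\xi^2/2$ the first term controls $\sum_i\po_{x_i}\beta_{ik}(u^\delta)$ in $L^2_{\loc}$, up to $t=0$). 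Testing the equation against a fixed $\phi\in C_0^\infty(\R^d)$ shows $t\mapsto\int u^\delta\phi\,dx$ is Lipschitz uniformly in $\delta$, and applying the Kato-type inequality \eqref{e2.3} (elementary here, since $u^\delta$ is a classical solution) to $u^\delta(t,\cdot+h)$ and $u^\delta(t,\cdot)$ yields $\sup_t\|u^\delta(t,\cdot+h)-u^\delta(t,\cdot)\|_{L^1_{\loc}}\le C\,|h|\,\operatorname{TV}(w_0)$.

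\textbf{Passage $\delta\to0$ for smooth data.} The uniform spatial-translation bound together with the uniform weak Lipschitz-in-time estimate give, via the Kolmogorov compactness criterion, a subsequence with $u^\delta\to u$ in $L^1_{\loc}(\R_+^{d+1})$ and a.e. Then (iv) of Definition~\ref{D:1.1} follows from the uniform moduli of continuity; for (i) and (ii) the a.e.\ convergence together with the uniform $L^2$ bound show $\beta_{ik}^{\psi}(u^\delta)\to\beta_{ik}^{\psi}(u)$ in $L^2_{\loc}$ and, for $\psi=\sqrt{\eta''}$, $\sum_i\po_{x_i}\beta_{ik}^{\psi}(u^\delta)\wto\sum_i\po_{x_i}\beta_{ik}^{\psi}(u)$ weakly in $L^2_{\loc}$ (and for general $\psi\in C(\R)$, $\psi(u^\delta)\sum_i\po_{x_i}\beta_{ik}(u^\delta)\wto\psi(u)\sum_i\po_{x_i}\beta_{ik}(u)$ as a product of a strongly and a weakly $L^2_{\loc}$-convergent sequence), and identifying weak limits in $\DD'$ gives both the $L^2$-regularity and the chain rule; for (iii), each $u^\delta$ satisfies the exact identity
\[
\po_t\eta(u^\delta)+\nabla\cdot\bfq(u^\delta)-\sum_{i,j=1}^d\po_{x_ix_j}^2 r_{ij}(u^\delta)=-\sum_{k=1}^d\Bigl(\sum_{i=1}^d\po_{x_i}\beta_{ik}^{\psi}(u^\delta)\Bigr)^2-\delta\,\eta''(u^\delta)|\nabla u^\delta|^2+\delta\,\Delta\eta(u^\delta),
\]
where the middle term on the right is $\le0$ and is discarded, $\delta\,\Delta\eta(u^\delta)\to0$ in $\DD'$, and the degenerate-parabolic dissipation passes to the limit by weak lower semicontinuity of $v\mapsto\int\phi\,v^2$ on $L^2_{\loc}$, producing \eqref{e1.9}. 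Thus $u$ is an entropy solution with datum $w_0$.

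\textbf{General $L^\infty$ data.} Given $u_0\in L^\infty(\R^d)$, choose $u_0^n\in C_0^\infty(\R^d)$ (e.g.\ $u_0^n=(u_0\mathbf 1_{\I_n})\ast\rho_{1/n}$) with $\|u_0^n\|_\infty\le\|u_0\|_\infty$ and $u_0^n\to u_0$ in $L^1_{\loc}(\R^d)$, and let $u^n$ be the entropy solution with datum $u_0^n$ constructed above. By \eqref{e2.9} (with weight $\rho(x)=e^{-\sqrt{1+|x|^2}}$),
\[
\intl_{\R^d}|u^n(t,x)-u^m(t,x)|\,\rho(x)\,dx\le e^{\tilde C t}\intl_{\R^d}|u_0^n(x)-u_0^m(x)|\,\rho(x)\,dx\to0\quad(n,m\to\infty),
\]
so $u^n\to u$ in $C([0,T];L^1_\rho)$ for every $T$, hence a.e.\ along a subsequence. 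The estimates above for $u^n$ depend on $u_0^n$ only through $\|u_0\|_\infty$, so they are uniform in $n$, and one passes to the limit in (i)--(iv) exactly as in the previous step, the initial condition following from the splitting $\int|u(t)-u_0|\rho\le\int|u(t)-u^n(t)|\rho+\int|u^n(t)-u_0^n|\rho+\int|u_0^n-u_0|\rho$ and \eqref{e2.9}.

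The step I expect to be the real obstacle is the limit passage in the entropy inequality (iii): the degenerate-parabolic dissipation is a \emph{nonlinear} functional of $\sum_i\po_{x_i}\beta_{ik}(u^\delta)$, for which only weak $L^2$ convergence is available, so it is essential to recast it, via the chain rule, as the single square $\bigl(\sum_i\po_{x_i}\beta_{ik}^{\sqrt{\eta''}}(u^\delta)\bigr)^2$ before invoking weak lower semicontinuity, which in turn forces one to establish the $L^2$-regularity (i) and the chain rule (ii) in the limit beforehand. All the remaining ingredients --- parabolic well-posedness, the maximum principle, the energy and equicontinuity bounds, and the Kolmogorov and weighted-$L^1$ compactness arguments (alternatively, the kinetic formulation of the regularized equation together with a velocity averaging lemma, which would avoid the $\BV$ approximation) --- are by now standard ({\em cf.}~\cite{VH,CP}).
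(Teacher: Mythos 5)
Your proof is correct, but it takes a genuinely different route from the paper's in its first stage. The paper does not redo the vanishing-viscosity construction: it truncates the data to $u_{0,R}=u_0\chi_{B_R}\in L^1(\R^d)\cap L^\infty(\R^d)$, invokes the existence theorem of Chen--Perthame \cite{CP} for $L^1$ data to get entropy solutions $u_R$, and then runs exactly the second stage you describe --- the weighted contraction \eqref{e2.9} makes $u_R$ Cauchy in $L^1_\rho$, and conditions (i)--(iv) pass to the limit using the uniform $L^2_{\loc}$ bound on the dissipation together with weak lower semicontinuity of the $L^2$-norm (the paper handles (iv) slightly differently, by first deriving an entropy inequality that incorporates the initial function and testing with $\z(t)\phi(x)$, rather than by your three-term splitting, but both work). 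You instead rebuild the ``nice data'' existence from scratch: uniformly parabolic regularization, maximum principle, entropy energy estimate, a $\BV$-type translation bound plus Kruzhkov's time-interpolation to get Kolmogorov compactness, and then the same weak-l.s.c.\ passage. What the paper's choice buys is brevity --- all of that analysis is delegated to \cite{CP} --- while yours buys self-containedness (modulo classical quasilinear parabolic theory) and makes explicit the one genuinely delicate point, which the paper also flags: the dissipation is quadratic in a quantity that converges only weakly in $L^2_{\loc}$, so it must first be recast via the chain rule as the single square $\bigl(\sum_i\po_{x_i}\beta_{ik}^{\sqrt{\eta''}}\bigr)^2$ before weak lower semicontinuity can be applied, which forces (i) and (ii) to be established in the limit before (iii). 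Your observation that the estimates used in the second stage depend on $u_0^n$ only through $\|u_0\|_\infty$ (the $\operatorname{TV}$ bound being needed only for the $\delta\to0$ compactness at fixed $n$) is the right bookkeeping and is what makes the two-stage scheme close.
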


\begin{proof} We consider the problem \eqref{e1.1}-\eqref{e1.2} with initial function 
$$
u_{0,R}(x)=u_0(x)   \chi_{{}_{{B_R}}}(x), 
$$
where $B_R=B(0,R)$ is the open ball with radius $R$ centered at the origin. By the existence theorem in \cite{CP}, which holds for initial data in $L^1(\R^d)$, we obtain an entropy solution $u_R(t,x)$ of \eqref{e1.1}-\eqref{e1.2}${}_R$. Now, using \eqref{e2.9}, we see that, for a.e.\ $t>0$,  
\begin{multline}\label{e2.9''}
\intl_{\R^d}|u_R(t,x)-u_{\tilde R}(t,x)|\rho(x)\,dx\le e^{\tilde C t}\int_{\R^d}|u_{0,R}(x)-u_{0,\tilde R}(x)|\rho(x)\,dx\longrightarrow0,\\ \; \text{as $R,\tilde R\to\infty$}.
\end{multline}
Therefore, $u_R(t,x)$ converges in $L_\loc^1((0,\infty)\X\R^d)$ to a function $u(t,x)$, which satisfies the bound in \eqref{e2.max} since it holds for all $u_R$. It is now easy to deduce from the fact that the $u_R$'s satisfy all conditions of Definition~\ref{D:1.1} that $u(t,x)$ also satisfies all those conditions. We just observe that for the verification of \eqref{e1.9} from the fact that the $u_R$'s satisfy \eqref{e1.9}, we use the uniform boundedness in $L_\loc^1(\R_+\X\R^d)$ of 
 $$
 \sum_{k=1}^d\left(\sum_{i=1}^d\po_{x_i}\b_{ik}(u_R)\right)^2
$$
and the weak lower semi-continuity of the $L^2$-norm. Also, to prove \eqref{e1.10} we first include the initial function in \eqref{e1.9}, with $u(t,x)$ replaced by $u_R(t,x)$,  tested against any function in $C_c^\infty(\R^{d+1})$,  then take the limit as $R\to\infty$ to get an entropy inequality for $u$ including the initial function. Once we get the latter, as usual,  we use a test function of the form $\z(t)\phi(x)$, with $\z'(t)=\d_\nu(t-t_0)$, for $t\ge0$, where $\d_\nu(s)$ is as in the proof of Proposition~\ref{P:1.1},  make $\nu\to\infty$, to obtain that 
$$
\lim_{t_0\to0}\int_{\R^d}\eta(u(t_0,x))\phi(x)\,dx\le \int_{\R^d}\eta(u_0(x))\phi(x)\,dx,
$$
is valid for any convex function $\eta$, which in turn implies  \eqref{e1.10}.

\end{proof}

 We recall that the space of Stepanoff almost periodic functions (with exponent $p=1$) in $\R^d$, $\SAP(\R^d)$, is defined as the completion of the trigonometric polynomials with respect to the
norm
$$
\|f\|_S :=\sup_{x\in\R^d}\int_{\I_1(x)}|f(y)|\,dy=\sup_{x\in\R^d}\int_{\I_1}|f(y+x)|\,dy,
$$
where
$$
\I_R(x):=\{y\in\R^d\,:\, |y-x|_\infty:=\max_{i=1,\cdots,d}|y_i-x_i|\le R/2\}.
$$
Another characterization of the Stepanoff almost periodic function (S-a.p., for short)  is obtained by introducing the concept of $\ve$-period of a function $f$, that is  a vector $\tau\in\R^d$ satisfying
\begin{equation}\label{e3.eper}
\|f(\cdot+\tau)-f(\cdot)\|_S\le \ve.
\end{equation}
Let $E_S\{\ve,f\}$ denote the set of such numbers. If the set $E_S\{\ve,f\}$ is relatively dense for all positive values of $\ve$, then the function $f$ is S-a.p.\ (see, e.g., \cite{B}). By the set $E_S\{\ve,f\}$ being relatively dense it is meant that there exists a length $l_\ve$, called {\em $\ve$-inclusion interval},  such that for any $x\in\R^d$, $\I_{l_\ve}(x)$ contains an element of $E_S\{\ve,f\}$.  Clearly, S-a.p.\ functions in $\R^d$ are in $\BAP(\R^d)$.

 \begin{lemma}\label{L:4.1} If $u_0$ is a trigonometric polynomial, then the entropy solution $u(t,x)$ of \eqref{e1.1}-\eqref{e1.2} is S-a.p.\ for all $t>0$, and, for any $\ve>0$, 
  $u(t,x)$ possesses an $\ve$-inclusion interval, $l_\ve(t)$, satisfying $l_\ve(t)=l_{\ve'(\ve, t)}(0)$, where $l_{\ve'}(0)$ is an $\ve'$-inclusion interval of $u_0(x)$, and $\ve'(\ve,t)=
  \ve e^{-\tilde Ct}/c_1 $, for certain $C, c_1>0$. As a consequence, if $u_0\in \BAP(\R^d)\cap L^\infty(\R^d)$, then $u(t,\cdot)\in \BAP(\R^d)$ for a.e.\ $t>0$.   
\end{lemma}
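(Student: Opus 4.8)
The plan is to deduce both claims from the uniqueness argument underlying Lemma~\ref{L:2.2}, applied to $u$ together with its spatial translates. Since a trigonometric polynomial $u_0$ is uniformly (Bohr) almost periodic, for every $\ve>0$ its set of $\ve$-periods $E\{\ve,u_0\}=\{\tau\in\R^d:\|u_0(\cdot+\tau)-u_0\|_\infty\le\ve\}$ is relatively dense; call $l_\ve(0)$ a corresponding $\ve$-inclusion interval (see, e.g., \cite{B}). Because \eqref{e1.1} is autonomous in $x$ and all conditions in Definition~\ref{D:1.1} are translation invariant, for each $\tau\in\R^d$ the function $(t,x)\mapsto u(t,x+\tau)$ is the unique entropy solution with datum $u_0(\cdot+\tau)\in L^\infty(\R^d)$, and $\|u_0(\cdot+\tau)\|_\infty=\|u_0\|_\infty$. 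Hence it suffices to show: if $\tau$ is an $\ve'$-period of $u_0$ then $\|u(t,\cdot+\tau)-u(t,\cdot)\|_S\le C_1e^{\tilde Ct}\ve'$, where $\tilde C=\tilde C(\bff,\bfA,d)$ is the constant of \eqref{e2.9} and $C_1=C_1(d)$; choosing $\ve'=\ve'(\ve,t)$ so that $C_1e^{\tilde Ct}\ve'\le\ve$ (the form $\ve|\log\ve|^{-1}e^{-Ct}$ being one such admissible, if conservative, choice for small $\ve$) then gives that $u(t,\cdot)$ is $S$-a.p.\ with $\ve$-inclusion interval $l_\ve(t)=l_{\ve'(\ve,t)}(0)$.

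The core step is a translation-uniform form of \eqref{e2.9}. For $z\in\R^d$ put $\rho_z(x):=\rho(x-z)$ with $\rho(x)=e^{-\sqrt{1+|x|^2}}$; then $\rho_z\in L^1(\R^d)$ with $\|\rho_z\|_{L^1}=\|\rho\|_{L^1}$ and, crucially, $\sum_i|\po_{x_i}\rho_z|+\sum_{i,j}|\po_{x_ix_j}^2\rho_z|\le C\rho_z$ with $C=C(d)$ \emph{not depending on $z$}. Running the proof of Lemma~\ref{L:2.2} with $\rho$ replaced by $\rho_z$ and with $v(t,x)=u(t,x+\tau)$ therefore yields, for a.e.\ $t>0$ and uniformly in $z$,
\[
\int_{\R^d}|u(t,x+\tau)-u(t,x)|\,\rho_z(x)\,dx\le e^{\tilde Ct}\int_{\R^d}|u_0(x+\tau)-u_0(x)|\,\rho_z(x)\,dx\le \|\rho\|_{L^1}e^{\tilde Ct}\ve'.
\]
Since $\rho_z(x)\ge e^{-\sqrt{1+d/4}}>0$ on the cube $\I_1(z)$, dividing and taking the supremum over $z\in\R^d$ gives $\|u(t,\cdot+\tau)-u(t,\cdot)\|_S\le C_1e^{\tilde Ct}\ve'$ with $C_1=e^{\sqrt{1+d/4}}\|\rho\|_{L^1}$, as required. (That $u(t,\cdot)$ is $S$-a.p.\ for \emph{all} $t>0$, not merely a.e., follows by also remarking that, for trigonometric polynomial data, $t\mapsto u(t,\cdot)$ has an $L^1_{\loc}(\R^d)$-continuous representative — e.g.\ from the same estimate with $u(t_1,\cdot),u(t_0,\cdot)$ in place of $u_0(\cdot+\tau),u_0$.)

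For the final assertion, let $u_0\in\BAP(\R^d)\cap L^\infty(\R^d)$. By definition of $\BAP(\R^d)$ there are trigonometric polynomials $u_0^n$ with $N_1(u_0^n-u_0)\to0$; let $u^n$ be the entropy solution with datum $u_0^n$ (Lemma~\ref{L:2.3}), which by the first part lies in $\BAP(\R^d)$ for every $t>0$. By the $L^1$-mean contraction \eqref{e2.2}, outside a single null set of times (a countable union of the null sets arising for the pairs $(u^n,u^m)$ and $(u^n,u)$) one has $N_1(u^n(t,\cdot)-u^m(t,\cdot))\le N_1(u_0^n-u_0^m)\to0$ and $N_1(u^n(t,\cdot)-u(t,\cdot))\le N_1(u_0^n-u_0)\to0$, $u$ being the entropy solution with datum $u_0$. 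Thus for a.e.\ $t>0$ the sequence $(u^n(t,\cdot))_n$ is $N_1$-Cauchy with limit $u(t,\cdot)$, whence $u(t,\cdot)\in\BAP(\R^d)$ for a.e.\ $t>0$.

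The main obstacle is the translation-uniform estimate: one must upgrade the \emph{global} weighted $L^1$-control of Lemma~\ref{L:2.2} to the \emph{uniformly local} (Stepanoff) control used above, and, since no finite speed of propagation is available for \eqref{e1.1}, this hinges precisely on the weight $\rho(x)=e^{-\sqrt{1+|x|^2}}$ having $|\nabla\rho|+|\nabla^2\rho|\le C\rho$ with a constant stable under translation. The remaining points — translation invariance of the entropy-solution notion, the $N_1$-density of trigonometric polynomials in $\BAP(\R^d)$, and the $L^1$-mean contraction of Proposition~\ref{P:1.1} — are standard or already established.
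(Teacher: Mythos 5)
Your proof is correct and follows essentially the same route as the paper: both hinge on running the weighted $L^1$ estimate \eqref{e2.9} with the translated weight $\rho(\cdot-z)$ against the spatial translate $u(t,\cdot+\tau)$, bounding $\rho(\cdot-z)$ from below on $\I_1(z)$ to pass to the Stepanoff norm, and then deducing the $\BAP$ assertion from the $N_1$-density of trigonometric polynomials together with Proposition~\ref{P:1.1}. Your direct sup-norm bound on the initial term is in fact slightly cleaner than the paper's splitting over $\I_R(x_0)$ and its complement (which is what produces the $|\log\ve|^{-1}$ factor in their inclusion-interval estimate); the only shaky point is your parenthetical claim that $L_{\loc}^1$-continuity in $t$ follows ``from the same estimate,'' since the contraction compares two solutions at equal times rather than one solution at two times --- the paper establishes that continuity separately in Lemma~\ref{L:4.2}.
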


\begin{proof} Clearly, $u_0$, being a trigonometric polynomial, is S-a.p. The fact that $u(t,x)$ is S-a.p.\ for all $t>0$ follows from \eqref{e2.9}, with $v(t,x)=u(t,x+\tau)$ and $\rho(x-x_0)$ instead of $\rho(x)$, from which we deduce
\begin{eqnarray}
&&\intl_{\I_1(x_0)}|u(t,x+\tau)-u(t,x)| \,dx \label{e4.1}\\
& &\le c(t)\intl_{\I_R(x_0)}|u_0(x)-u_0(x+\tau)|\rho(x-x_0)\,dx + c(t)O\left(\frac1{R}\right) \nonumber \\
& &\le c(R,t)\sup_{x\in\R^d}\intl_{\I_1(x)}|u_0(y+\tau)-u_0(y)|\,dy+c(t)O\left(\frac1{R}\right),\nonumber
\end{eqnarray}
where $c(t)=ce^{\tilde C t}$ with, $c=e^{\sqrt{1+d}}$,  $\tilde C>0$ only depending on $\rho$,  $c(R,t)$ is a positive constant depending only on $R,t$, and $O(1/R)$ goes to zero when $R\to\infty$ uniformly with respect to $x_0$. We remark that, with $c$ just defined,  $c\rho(x-x_0)\ge 1$ on the cube $\I_1(x_0)$. More specifically,
\begin{multline*}  
\int_{\I_R(x_0)}|u_0(x)-u_0(x+\tau)|\rho(x-x_0)\,dx\le \int_{\R^d}|u_0(x)-u_0(x+\tau)|\rho(x-x_0)\,dx\\
=\sum_{y\in\Z^d}\int_{\I_1(x_0+y)}|u_0(x)-u_0(x+\tau)|\rho(x-x_0)\,dx\le c_0\sup_{y\in\R^d}\int_{\I_1(y)}|u_0(x)-u_0(x+\tau)|\,dx,
\end{multline*}
where
$$
c_0=\sum_{y\in\Z^d}\max_{\I_1(x_0+y)}\rho(x-x_0)=\sum_{y\in\Z^d}\max_{\I_1(y)}\rho(x)<+\infty.
$$
Therefore, we can choose $C(R,t)=c_0c(t)=c_1e^{\tilde C t}$, $c_1=cc_0$. In particular, $C(R,t)=C(t)$ does not depend on $R$.  
So, choosing $R$ large enough so that $c(t)O(1/R)\le \ve/2$ and then taking any $\tau\in E_S\{\ve/(2C(t)), u_0\}$, we get that $\tau \in E_S\{\ve, u(t,\cdot)\}$, and so $u(t,\cdot)$ is S-a.p. By the above calculation, we get the estimate $l_\ve(t)=l_{\ve'(\ve, t)}(0)$, with   $\ve'(\ve,t)=\ve e^{-\tilde Ct}/c_1$. 

As for the final assertion, given $u_0$ satisfying \eqref{e1.5}, we approximate $u_0$ by trigonometric polynomials, say, using Bochner-F\'ejer's polynomials (see \cite{B}). Then, we use the Proposition~\ref{P:1.1} to obtain that the solutions corresponding to the approximating trigonometric polynomials converge in the 
$N_1$-seminorm uniformly in $t$ to the entropy solution associated to $u_0$, and so we have $u(t,\cdot)\in \BAP(\R^d)$ for a.e.\ $t>0$.  

\end{proof}

We prove now the continuity of the (weak) entropy solution of \eqref{e1.1}-\eqref{e1.2} as a function from $[0,\infty)$ to $\BAP(\R^d)$.

\begin{lemma}\label{L:4.2} Let $u$ be the  entropy solution of  \eqref{e1.1}-\eqref{e1.2}. Then $u\in C([0,\infty); L^1(\I_R))$, for any $R>0$. Moreover,  if $u_0$  satisfies \eqref{e1.5}, then $u\in C([0,\infty), \BAP(\R^d))$.
\end{lemma}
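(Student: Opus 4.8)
**Proof proposal for Lemma 2.7 (the statement $u \in C([0,\infty); L^1(\I_R))$ and, under \eqref{e1.5}, $u \in C([0,\infty), \BAP(\R^d))$).**

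The plan is to reduce the continuity in time to an a priori continuity of translates plus the already-established facts, exploiting the $L^1$-mean contraction (Proposition~\ref{P:1.1}), the local $L^1$-contraction inequality \eqref{e2.9}, and the initial-condition property \eqref{e1.10}. First I would prove the local statement $u \in C([0,\infty);L^1(\I_R))$. The idea is a standard time-translation argument: for $h>0$, the function $u^h(t,x):=u(t+h,x)$ is again an entropy solution of \eqref{e1.1} (on $(-h,\infty)\times\R^d$, or equivalently on $(0,\infty)\times\R^d$ with initial datum $u(h,\cdot)$), so by \eqref{e2.9} applied to the pair $u$ and $u^h$,
\begin{equation*}
\intl_{\R^d}|u(t+h,x)-u(t,x)|\rho(x)\,dx \le e^{\tilde C t}\intl_{\R^d}|u(h,x)-u_0(x)|\rho(x)\,dx,
\end{equation*}
for a.e.\ $t>0$, where $\rho(x)=e^{-\sqrt{1+|x|^2}}$. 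Since $u \in L^\infty$, the right-hand side is controlled by $\int_{\I_R}|u(h,x)-u_0(x)|\,dx$ (up to an arbitrarily small tail in $R$, absorbed using the uniform bound \eqref{e2.max}), which tends to $0$ as $h\to0+$ by \eqref{e1.10}. Thus $t\mapsto u(t,\cdot)$ is right-continuous at $t=0$ in $L^1_\loc$; the uniform-in-$h$ estimate above, together with the semigroup-type reasoning (translating the base point), upgrades this to continuity on all of $[0,\infty)$, first in the weighted $L^1$ norm and then, using $u\in L^\infty$ and finiteness of $\I_R$, in $L^1(\I_R)$ for each $R>0$.

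Next, for the $\BAP$-valued continuity under \eqref{e1.5}, the strategy is to combine the $N_1$-contraction with an approximation by trigonometric polynomials. Given $u_0 \in \BAP(\R^d)\cap L^\infty(\R^d)$, pick Bochner--Féjer trigonometric polynomials $u_0^n \to u_0$ in $N_1$ with $\|u_0^n\|_\infty \le \|u_0\|_\infty$ (or a fixed constant), and let $u^n$ be the corresponding entropy solutions. By Lemma~\ref{L:4.1}, each $u^n(t,\cdot)$ is $S$-a.p.\ for every $t>0$, hence lies in $\BAP(\R^d)$, and moreover the quantitative $\ve$-inclusion-interval estimate $l_\ve(t)=l_{\ve'(\ve,t)}(0)$ with $\ve'(\ve,t)=\ve|\log\ve|^{-1}e^{-Ct}$ holds, which I would use to show that $t\mapsto u^n(t,\cdot)$ is continuous into $\BAP(\R^d)$: combining the local $L^1$-continuity just proved with the uniform control of the inclusion intervals (so that the $N_1$-seminorm of $u^n(t,\cdot)-u^n(s,\cdot)$ is controlled by a local integral, up to a uniformly small error governed by the inclusion interval) gives continuity of $u^n(\cdot)$ into $(\BAP(\R^d),N_1)$. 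By Proposition~\ref{P:1.1}, $N_1(u^n(t,\cdot)-u^m(t,\cdot)) \le N_1(u_0^n-u_0^m) \to 0$ uniformly in $t\ge0$; hence $u^n \to u$ in $C([0,\infty);(\BAP(\R^d),N_1))$ (a uniform limit of continuous $\BAP$-valued maps), and since $\BAP(\R^d)$ is complete under $N_1$, the limit $u$ lies in $C([0,\infty);\BAP(\R^d))$. That $u^n(t,\cdot)\to u(t,\cdot)$ in $N_1$ for a.e.\ $t$ is already in Lemma~\ref{L:4.1}; the uniform-in-$t$ convergence promotes a.e.\ identification to the continuous representative.

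The main obstacle I anticipate is the passage from the local $L^1_\loc$ (or weighted-$L^1$) time-continuity to genuine $\BAP$-valued ($N_1$) time-continuity: the $N_1$-seminorm is a global averaging object, so pointwise-in-time $L^1_\loc$ closeness does not by itself yield $N_1$-closeness. This is precisely where the almost periodicity must be used uniformly in $t$ — one needs that the ``tail'' contribution to $N_1(u^n(t,\cdot)-u^n(s,\cdot))$ is uniformly small because both functions share (up to controlled error) a common relatively dense set of $\ve$-periods, with inclusion interval bounded on compact $t$-intervals by the explicit formula in Lemma~\ref{L:4.1}. Handling the $t$-uniformity of this reduction, and checking that the error terms coming from the weight $\rho$ and from replacing $\|\cdot\|_S$ estimates by $N_1$ estimates are uniformly controlled, is the technical heart; the rest is routine once Proposition~\ref{P:1.1}, \eqref{e2.9}, \eqref{e1.10}, and Lemma~\ref{L:4.1} are in hand.
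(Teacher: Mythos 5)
Your second half --- reducing $\Me(|u(t,\cdot)-u(s,\cdot)|)$ to a local integral over a single inclusion cell plus an $O(\ve)$ error by means of a common, relatively dense set of $\ve$-almost periods valid for all $t\in[0,T]$, then passing to general data via Bochner--F\'ejer polynomials and the uniform-in-$t$ contraction of Proposition~\ref{P:1.1} --- is exactly the paper's argument (this is the computation \eqref{e4.2}), so there is nothing to compare there; you correctly identified the cube-translation step as the technical heart. The genuine difference is in the first half. The paper proves $u\in C([0,\infty);L^1(\I_R))$ by returning to the vanishing-viscosity approximations \eqref{e1.1ve}--\eqref{e1.2ve}: the weighted contraction applied to \emph{spatial} translates gives a modulus of continuity in $x$ uniform in $\ve$ and $t$, and Kruzhkov's interpolation lemma (Lemma~5 of \cite{Kr}) converts this into a modulus of continuity in $t$. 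You instead apply \eqref{e2.9} directly to the pair $(u,\,u(\cdot+h,\cdot))$ and let \eqref{e1.10} drive the right-hand side to zero as $h\to0+$, uniformly for $t$ in compacts. This is more economical --- it stays entirely within the entropy framework and needs no parabolic interpolation --- but it carries one obligation the paper's route avoids: to invoke \eqref{e2.9} for the translate you must know that $u(\cdot+h,\cdot)$ is an entropy solution in the sense of Definition~\ref{D:1.1}, i.e.\ that the initial trace \eqref{e1.10} holds at time $h$. A priori this is available only for a.e.\ $h$ (e.g.\ at Lebesgue points of $t\mapsto u(t,\cdot)\in L^1_{\loc}$, using the entropy inequality to produce one-sided traces), which is still enough to obtain a uniform modulus of continuity on a full-measure set of times and hence a continuous representative --- but you should state this explicitly rather than treat every $h$ as admissible. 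With that caveat, and with the observation that $\rho=e^{-\sqrt{1+|x|^2}}$ is bounded below on each $\I_R$ (so weighted-$L^1$ continuity yields $L^1(\I_R)$ continuity, the tail being controlled by \eqref{e2.max}), your first half is a correct alternative to the paper's.
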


\begin{proof}  We first show that $u\in C([0,\infty); L^1(\I_R))$, for any $R>0$. By the uniqueness (see Lemma~\ref{L:2.2}), we may assume that $u$ is obtained as the limit of the solutions of the parabolic approximate problems with a vanishing viscosity $\ve>0$,
\begin{align}
& \po_tu+\nabla\cdot \bff(u)=\nabla^2: \bfA(u)+\ve \Delta u,\qquad x\in\R^d,\quad t>0,\label{e1.1ve}\\
&u(0,x)=u_0, \quad x\in\R^d.\label{e1.2ve}
\end{align}
Using the analog of Lemma~\ref{L:2.2} for problem \eqref{e1.1ve}-\eqref{e1.2ve} we obtain a uniform in $\ve$ and $t$, for $0\le t\le T$,  modulus of continuity 
$\om_r^x(\s)$  for 
$$
J_r(u^\ve(t,x),\Delta x):=\int_{\I_r}|u^\ve(t,x+\Delta x)-u^\ve(t,x)|\,dx\le \om_r^x(|\Delta x|),
$$
 where we denote by $u^\ve(t,x)$ the solution of  \eqref{e1.1ve}--\eqref{e1.2ve}.  Then we use lemma~5 of \cite{Kr} to obtain a uniform in $\ve$ and $t$, for $0\le t\le T$,  modulus of continuity in $L_\loc^1(\R^d)$ in the $t$ variable of the form
 $$
 I_r(u^\ve(t,x),\Delta t):=\int_{\I_r}|u^\ve(t+\Delta t,x)-u^\ve(t,x)|\,dx\le \text{const.\ }\min_{0<h\le \rho}\left[ h+ \om_r^x(h)+\frac{\Delta t}{h^2}\right],
 $$
 with $0<2\rho\le r$. These estimates imply the compactness of the sequence $u^\ve(t,x)$ in $L_\loc^1((0,\infty)\X\R^d)$ and the limit $u(t,x)$ also satisfies both estimates.
 In particular, $u\in C([0,\infty); L^1(\I_R))$, for any $R>0$.  

To prove that $u\in C([0,\infty), \BAP(\R^d))$, let us first consider the case where $u_0$ is a trigonometric polynomial. By Lemma~\ref{L:4.1}, the corresponding entropy solution of \eqref{e1.1}-\eqref{e1.2} is S-a.p.\ for all $t>0$. Also, given any $T>0$, and $\ve>0$, we can get $l_\ve$ sufficiently large which is an $\ve$-inclusion interval for $u(t,\cdot)$ for all $t\in [0,T]$.  Let us partition $\R^d$ through the net of cubes $I'=kl_\ve+[0,l_\ve]^d$, $k\in\Z^d$, with edges of length $l_\ve$ parallel to the axes. For each such cube $I'$ there exists an $\ve$-almost period $\tau_{I'}$ such that $I'-\tau_{I'}\subset [0,2 l_\ve]^d$. Hence, given $t,s\in [0,T]$, we may  assume for simplicity that the $\tau_{I'}$ are  common $\ve$-almost periods for both $u(t,\cdot)$ and $u(s,\cdot)$. We then have
\begin{align}
\Me(|u(t,\cdot)-u(s,&\cdot)|)=\lim_{N\to\infty} \frac{1}{(2Nl_\ve)^d}\int_{[-Nl_\ve,Nl_\ve]^d}|u(t,x)-u(s,x)|\,dx\label{e4.2}\\
&\le \lim_{N\to\infty} \frac{1}{(2Nl_\ve)^d}\sum\limits_{I'\subset[-Nl_\ve,Nl_\ve]^d} \int_{I'}|u(t,x-\tau_{I'})-u(s,x-\tau_{I'})|\,dx\nonumber\\
&+\lim_{N\to\infty} \frac{1}{(2Nl_\ve)^d}\sum\limits_{I'\subset[-Nl_\ve,Nl_\ve]^d} \int_{I'}|u(t,x)-u(t,x-\tau_{I'})|\,dx\nonumber\\
&+\lim_{N\to\infty} \frac{1}{(2Nl_\ve)^d}\sum\limits_{I'\subset[-Nl_\ve,Nl_\ve]^d} \int_{I'}|u(s,x)-u(s,x-\tau_{I'})|\,dx\nonumber\\
&\le\frac{1}{l_\ve^d}\int_{[0,2l_\ve]^d}|u(t,x)-u(s,x)|\,dx+2^{d+1}\ve.\nonumber
\end{align}
The above inequality holds for any $t,s\in[0,T]$ and $\ve>0$. Since, as we have just shown, $u\in C([0,\infty), L_{\loc}^1(\R^d))$,  we see that for $t$ and $s$ close enough, the right-hand side of \eqref{e4.2} is $\le (1+2^{d+1})\ve$, which proves that $u\in C([0,\infty);\BAP(\R^d))$ in the case where $u_0$ is a trigonometric polynomial.    

Now, for general initial data satisfying \eqref{e1.5}, we use Proposition~\ref{P:1.1} and approximate $u_0$ by trigonometric polynomials $u_{0k}$, e.g., using Bochner-F\'ejer's polynomials (see \cite{B}),  and observe that, for each $t>0$,  the corresponding solutions $u_k(t,\cdot)$ converge to the entropy solution $u(t,\cdot)$ corresponding to $u_0$, in $\BAP(\R^d)$, uniformly for $t>0$. Therefore, we again have $u\in C([0,\infty),\BAP(\R^d))$, proving the lemma.    

\end{proof}

As a consequence of the fact that $u\in C([0,\infty);\BAP(\R^d))$ we have the following.

\begin{lemma}\label{L:4.3} The set $\Lambda_u=\{\l \in \R^d\,:\, \Me(e^{-2\pi i \l\cdot (\cdot)}u(t,\cdot))\ne0,\; \text{for some $t\ge0$} \,\}$ is at most countable. 
\end{lemma}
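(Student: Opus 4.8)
The plan is to exploit the continuity $u\in C([0,\infty);\BAP(\R^d))$ established in Lemma~\ref{L:4.2} in order to replace the \emph{uncountable} union defining $\Lambda_u$ by a \emph{countable} union over rational times.

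First I would record that, for each fixed $\l\in\R^d$, the Bohr--Fourier coefficient
$$
a_\l(t):=\Me\bigl(e^{-2\pi i\l\cdot x}u(t,x)\bigr)
$$
depends continuously on $t\in[0,\infty)$. Indeed, since $|e^{-2\pi i\l\cdot x}|\equiv 1$, the duality bound $|\Me(e^{-2\pi i\l\cdot x}g)|\le N_1(g)$ gives
$$
|a_\l(t)-a_\l(s)|=\Bigl|\Me\bigl(e^{-2\pi i\l\cdot x}(u(t,x)-u(s,x))\bigr)\Bigr|\le N_1\bigl(u(t,\cdot)-u(s,\cdot)\bigr),
$$
so $t\mapsto a_\l(t)$ is the composition of the continuous map $t\mapsto u(t,\cdot)$ (Lemma~\ref{L:4.2}) with a bounded linear functional on $\BAP(\R^d)$, hence continuous.

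Next I would recall that for each fixed $t\ge 0$ one has $u(t,\cdot)\in\BAP(\R^d)$, so its spectrum $\Sp(u(t,\cdot))=\{\l\in\R^d:a_\l(t)\ne 0\}$ is at most countable (see \cite{B}). Now take $\l\in\Lambda_u$, so that $a_\l(t_0)\ne 0$ for some $t_0\ge 0$. By the continuity just established there is $\d>0$ with $a_\l(t)\ne 0$ for all $t\in(t_0-\d,t_0+\d)\cap[0,\infty)$, and this interval contains a rational point $s$; thus $\l\in\Sp(u(s,\cdot))$. This yields
$$
\Lambda_u\subseteq\bigcup_{s\in\Q\cap[0,\infty)}\Sp(u(s,\cdot)),
$$
a countable union of at most countable sets, hence at most countable, which is the claim.

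I do not foresee a genuine obstacle: the only point needing (minor) care is the continuity of $t\mapsto a_\l(t)$, which is immediate from $|\Me(e^{-2\pi i\l\cdot x}g)|\le N_1(g)$ together with Lemma~\ref{L:4.2}; the remainder is the standard ``continuity plus density of $\Q$'' reduction.
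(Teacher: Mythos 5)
Your proof is correct and follows essentially the same route as the paper: both arguments reduce $\Lambda_u$ to the countable union $\bigcup_{s\in\Q\cap[0,\infty)}\Sp(u(s,\cdot))$ by combining the countability of each spectrum with the continuity of $t\mapsto a_\l(t)$ inherited from $u\in C([0,\infty);\BAP(\R^d))$ via the bound $|\Me(e^{-2\pi i\l\cdot x}g)|\le N_1(g)$. The paper phrases the continuity step contrapositively (if $a_\l$ vanishes at all rational times it vanishes everywhere), but the argument is the same.
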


\begin{proof} For any $0\le t\in \Q$, we have that $\Sp(u(t,\cdot))$ is at most countable, so the set $L=\{\l\in \Sp(u(t,\cdot))\,:\, 0\le t\in \Q\}$ is at most countable. Now, for $0\le t\notin \Q$ and $\l\in \Sp(u(t,\cdot))$, if $\l\notin L$, then $\Me(e^{-2\pi i \l\cdot (\cdot)}u(t,\cdot))=0$, for all $0\le t\in \Q$. Therefore,  since $u\in C([0,\infty);\BAP(\R^d))$, it follows that $\l\notin\Lambda_u$, that is $\Lambda_u\subset L$ and so it is at most countable. 
\end{proof}

\section{The problem on the Bohr compact $\GB_d$.}\label{S:3}

For $1\le p<\infty$, let $\BAP^p(\R^d)$ denote the space of the $L^p$-Besicovitch almost periodic functions, which can be defined as the completion of the space of trigonometric polynomials, i.e., finite sums $\sum_{\l}a_\l e^{2\pi i\l\cdot x}$  under the semi-norm
$$
N_p(g):= \limsup_{R\to\infty}\left( \frac{1}{R^d}\int_{\I_R}|g(x)|^p\,dx\right)^{1/p}.
$$
In particular, $\BAP^1(\R^d)=\BAP(\R^d)$. We denote by $[|f|]_p:=\Me(|f|^p)^{1/p}=\left(\medint |f|^p\,dx\right)^{1/p}$ the norm in $\BAP^p(\R^d)$ obtained from $N_p$. 

Let $\GB_d$ denote the Bohr compact group, which is the compactification of $\R^d$ provided by $\AP(\R^d)$ through a classical theorem of Stone, such that $\AP(\R^d)$ is isometrically isomorphic to $C(\GB_d)$  (see \cite{DS}, also \cite{AF}).   $\GB_d$ is endowed with the (probability) measure induced by the mean value functional over $\AP(\R^d)$, which coincides with the normalized Haar measure inherent to its topological group structure,  and henceforth will be denoted by $\mm$. 
It then follows  that  $\BAP^p(\R^d)$ is isometrically isomorphic to $L^p(\GB_d;\mm)$, $1\le p<\infty$.   Indeed, if $f\in\BAP^p(\R^d)$, then $f$ may be approximated in the norm of $\BAP^p(\R^d)$ by a sequence in $\AP(\R^d)$, each of whose functions may be viewed as an element of $C(\GB_d)$ and together form a Cauchy sequence in $L^p(\GB_d)$, since, by definition, the $\BAP^p$-norm of functions in $\AP(\R^d)$ is equal to the norm of the associated functions in  $L^p(\GB_d)$. In this way, we obtain an element $\hat f\in L^p(\GB_d)$ associated with $f\in\BAP(\R^d)$. We can easily reverse the arguments and conclude that given any function in  $L^p(\GB_d)$ we may associate to it a unique element of $\BAP(\R^d)$, $1\le p<\infty$.  The isometric isomorphism $f\mapsto \hat f$ between $\AP(\R^d)$ and $C(\GB_d)$ and
its extensions between $\BAP^p(\R^d)$ and $L^p(\GB_d)$, $1\le p<\infty$,  is sometimes referred to as Gelfand transform.   

We can also define $\BAP^\infty(\R^d)$ in the following way
$$
\BAP^\infty(\R^d):=\{f\in \bigcap_{1\le p<\infty}\BAP^p(\R^d)\,:\, \sup_{p\in[1,\infty)}[|f|]_p<\infty\,\},
$$
and define for $f\in\BAP^\infty(\R^d)$
$$
[|f|]_\infty:=\sup_{p\in[1,\infty)}[|f|]_p.
$$
In this way, the association $f\mapsto \hat f$ is also an isometric isomorphism between $\BAP^\infty(\R^d)$ and $L^\infty(\GB_d)$. 

Given $f\in \BAP^\infty(\R^d)$ with $M=[|f|]_\infty$, defining
$$
f_M(x):= \begin{cases} -M, & \text{if $f(x)\le -M$} \\ f(x), & \text{if $|f(x)|\le M$}\\ M, &\text{if $f(x)\ge M$} \end{cases}, 
$$
we have the identity  $f_M\equiv f$ in $\BAP^p(\R^d)$, for $1\le p\le\infty$. So, any element in $\BAP^\infty(\R^d)$ has a representative in $\BAP^1(\R^d)\cap L^\infty(\R^d)$. The converse is trivially true, that is, given any $f\in\BAP^1(\R^d)\cap L^\infty(\R^d)$, we have that $f\in\BAP^\infty(\R^d)$ and it is easy to see that 
$$
[|f|]_\infty\le \|f\|_\infty.
$$

\medskip

Concerning the mean value, it is well  known that if a function $g\in L_\loc^1(\R^d)$ possesses a mean value $\Me(g)$,  then, for all $\phi\in C_c(\R^d)$, 
$$ 
 \lim_{R\to\infty}\int_{\R^d} g(R x)\phi(x)\,dx= \Me(g)\int_{\R^d} \phi(x)\,dx,
 $$
 and, reciprocally, the latter also serves as a definition for the mean value. This relation can also be written, by a trivial change of coordinates, in the form
 \begin{equation}\label{e3.1}
 \lim_{R\to\infty} R^{-d}\int_{\R^d} g(x) \phi(\frac{x}{R})\,dx= \Me(g)\int_{\R^d} \phi(x)\,dx, \quad \forall \phi\in C_c(\R^d).
 \end{equation}
 
  \medskip
  Concerning the structure of topological (commutative) group of which $\GB_d$ is endowed, another important consequence is the existence of an approximate identity, that is, a (generalized) sequence $\{\rho_\a\,:\, \a\in J\}\subset \AP(\R^d)$, where $J$ is the partially ordered  set of neighborhoods of $0$ in $\GB_d$,      
satisfying  $\rho_\a\ge0$, $\Me(\rho_\a)=1$, for all $\a$, $\supp \rho_\a \subset \bar\a$, where $\bar \a$ is the closure of the neighborhood $\a$, with the property that $\rho_\a* f\to f$, as $\a\to \{0\}$, for all $f\in\AP(\R^d)$, and we may assume  $\rho_\a(-x)=\rho_\a(x)$ (see \cite{Loo}).   
Here, ``$*$'' is the convolution operation naturally defined in $\AP(\R^d)$ by
\begin{equation*}
f*g(x)=\Medint_{\R^d}f(x-y)g(y)\,dy=\Medint_{\R^d}f(y)g(x-y)\,dy,
\end{equation*}
or, viewed as an operation in $C(\GB_d)$,
$$
\hat f*\hat g(\om)=\int_{\GB_d}\hat f(\om-\z)\hat g(\z)\,d\mm(\z)=\int_{\GB_d}\hat f(\z)\hat g(\om-\z)\,d\mm(\z).
$$
In what follows we will frequently identify functions in $\BAP(\R^d)$ with their Gelfand transforms in $L^1(\GB_d)$ omitting the `` $\hat{}$ ''.

The compact $\GB_d$ is a non-separable topological space and so the set of neighborhoods of 0 has no countable basis. On the other hand, it is often preferable to work with sequences than with generalized sequences, but the approximate identity in $\GB_d$ is in general a generalized sequence. However, it is possible to introduces coarser topologies on $\GB_d$ which are not Hausdorff but whose quotient $\GB_d/\sim$, with $\om\sim\z$ iff $\om$ and $\z$ cannot each belong to a neighborhood that does not contain
 the other, is a compact topological group and so also endowed with an approximation of the identity. In what follows we will introduce such topologies by means of closed subalgebras  containing the identity of the algebra generated by $\{e^{2\pi i\l\cdot x}\,:\, \l\in\R^d\}$. The point is that if all functions on $\GB_d$ that you are going to deal with are Borel functions on $\GB_d$ with respect to topology generated by such subalgebra, then you may use as approximate identity an approximate identity for $\GB_d/\sim$, which is the same as restricting the neighborhoods of 0 to a countable basis of neighborhoods for the topology generated by the corresponding subalgebra.          

Given a family $\F$ of functions in  $\BAP(\R^d)$, we denote by $\A_\F$ the closure in the $\sup$-norm of the algebra, over the complex numbers, generated by $\{1,\ e^{2\pi i \l\cdot x}\,:\,  \l\in\Sp(v),\; v\in\F\}$ and by $\Gr(\F)$ the smallest additive group generated by $\Lambda_\F:=\{\l \in \Sp(v)\,:\, v\in\F\}$.   In the particular case where $\F=\{u(t,\cdot)\,:\,t\ge0\}$, with $u(t,\cdot)\in \BAP(\R^d)$, for all $t\ge0$, we use the simplified notation $\Lambda_u, \Gr(u), \A_u$ instead of $\Lambda_\F,\Gr(\F), \A_\F$, respectively. Similarly, 
when $\F:=\{ u(t,\cdot),\; v(t,\cdot)\,:\, t\ge0\}$, we use the simplified notation $\Lambda_{u,v}, \Gr(u,v), \A_{u,v}$ instead of $\Lambda_\F,\Gr(\F), \A_\F$, respectively. 

If $u(t,\cdot)$ and $v(t,\cdot)$ are entropy solutions of \eqref{e1.1}-\eqref{e1.2}, satisfying \eqref{e1.5}, Lemma~\ref{L:4.3} implies that $\Lambda_u$, $\Lambda_v$ and $\Lambda_{u,v}$ are countable sets, and so
$\A_u$, $\A_v$ and $\A_{u,v}$ generate separable topologies in $\GB_d$, that is, topologies endowed with a countable basis of neighborhoods of $0$. Moreover, if $u(t,\cdot)$ and $v(t,\cdot)$ are two entropy solutions of \eqref{e1.1}-\eqref{e1.2}, satisfying \eqref{e1.5}, and $h\in C([-M,M])$, $H\in C([-M,M]\X[-M,M])$,  $M=\|u_0\|_\infty$, then
$$
\Sp(h(u(t,\cdot)))\subset \Gr(u),\quad \Sp(H(u(t,\cdot),v(t,\cdot)))\subset \Gr(u,v), \quad \forall t\ge0,
$$
since $h(u(t,\cdot))$ and $H(u(t,\cdot),v(t,\cdot))$ may be approximated in $\BAP(\R^d)$ by functions in $\A_u$ and $\A_{u,v}$, respectively, since, as already seen, $u(t,\cdot), v(t,\cdot)\in\BAP^\infty(\R^d)$.   

In particular, if all the $\BAP$ functions we are considering are of the form $h(u(t,\cdot))$ or $H(u(t,\cdot),v(t,\cdot))$, where $u(t,\cdot)$ and $v(t,\cdot)$ are entropy solutions of \eqref{e1.1}-\eqref{e1.2}, satisfying \eqref{e1.5}, we   may then restrict the indices of  the approximate identity  $\rho_\a$ so that $\a$  runs through the countable basis of neighborhoods of $0$ in $\GB_d$ belonging to the topology generated by  $\A_u$, or $\A_{u,v}$, as the case may be. We remark in passing that the topology in $\GB_d$ generated by the closed algebra $\A_u$ is not Hausdorff, but the compact space associated with it, via Stone theorem, may be obtained by taking the quotient of $\GB_d$ by the equivalence relation 
$\sim$, where $\om\sim\z$ means $\om$ and $\zeta$ cannot be separated by $\A_u$, that is, $g(\om)=g(\z)$ for all $g\in\A_u$. 
  
 In what follows we will be dealing with functions of these types so we will use the approximate identity $\rho_\a$ assuming  $\a\in\N$; in particular,  $\rho_\a*g\to g$,  as $\a\to\infty$, in the $\sup$-norm, for all $g\in \A_u$, or $\A_v$, or $\A_{u,v}$, according to the case. 

More generally, if $u(t,x)$ is the entropy solution of \eqref{e1.1}-\eqref{e1.2}, satisfying \eqref{e1.5}, and $g:[-M,M] \to\R$, $M=\|u_0\|_\infty$,  is a bounded Borel function, we have that $\Sp(g(u(t,\cdot)))\subset \Gr(u)$,
 for all $t\ge0$, where the composition $g(u(t,x))$ is defined by $\widehat{g(u)}=g(\hat u)$, and $\hat{}$ denotes the Gelfand transform.  Indeed, this is easily verified for $g\in C([-M,M])$. On the other hand, the class of bounded Borel functions $g$ satisfying  $\Sp(g(u(t,\cdot)))\subset \Gr(u)$, for all $t\ge0$, is closed under everywhere convergence, by dominated convergence.

The following lemma establishes an important fact about the entropy solution of \eqref{e1.1}-\eqref{e1.2}.

\begin{lemma}\label{L:3.1} Let $u(t,x)$ be the entropy solution of \eqref{e1.1}-\eqref{e1.2}. Then, the set $\{u(\cdot,t)\,:\, t>0\}$ is relatively compact in $\BAP^2(\R^d)$.  
\end{lemma}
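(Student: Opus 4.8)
Throughout we use the isometric identification $\BAP^2(\R^d)\cong L^2(\GB_d;\mm)$ from Section~\ref{S:3}, and we regard $t\mapsto u(t,\cdot)$ as a curve in $L^2(\GB_d)$. The plan is to verify the classical compactness criterion in $L^p$ of a compact group: a bounded subset $F\subset L^2(\GB_d)$ is relatively compact as soon as it is \emph{uniformly equicontinuous under translation}, i.e.
$$
\lim_{\omega\to e_{\GB_d}}\ \sup_{f\in F}\ \|L_\omega f-f\|_{L^2(\GB_d)}=0 ,
$$
where $L_\omega$ denotes translation by $\omega\in\GB_d$. (This is standard: convolving the family with a continuous approximate unit $\rho\ge0$, $\Me(\rho)=1$, supported in a small neighbourhood of $e$, reduces it to a family that is bounded and equicontinuous in $C(\GB_d)$, to which Ascoli's theorem applies; cf.\ \cite{Loo}. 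If a metrizable setting is preferred, one may replace $\GB_d$ throughout by the compact metrizable group $\widehat{\Gr(u)}$, using Lemma~\ref{L:4.3}.) So it suffices to check these two properties for $F=\{u(t,\cdot):t>0\}$.

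Boundedness is immediate: by \eqref{e2.max}, $|u(t,x)|\le M:=\|u_0\|_\infty$ for a.e.\ $(t,x)$, hence $\|u(t,\cdot)\|_{L^2(\GB_d)}=N_2(u(t,\cdot))\le M$ for all $t$. For equicontinuity the starting point is that, for each $x\in\R^d$, the function $v(t,y):=u(t,y-x)$ is again the entropy solution of \eqref{e1.1}-\eqref{e1.2} (the equation and Definition~\ref{D:1.1} are translation invariant), with datum $u_0(\cdot-x)$. Since $|u(t,\cdot-x)-u(t,\cdot)|\le 2M$, Proposition~\ref{P:1.1} together with $u\in C([0,\infty);\BAP(\R^d))$ (Lemma~\ref{L:4.2}) gives, for \emph{all} $t>0$,
$$
\|L_xu(t,\cdot)-u(t,\cdot)\|_{L^2(\GB_d)}^2=N_2\bigl(u(t,\cdot-x)-u(t,\cdot)\bigr)^2\le 2M\,N_1\bigl(u(t,\cdot-x)-u(t,\cdot)\bigr)\le 2M\,N_1\bigl(u_0(\cdot-x)-u_0\bigr),
$$
uniformly in $t$, for $x$ in the dense subgroup $\R^d\subset\GB_d$.

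The main step is to upgrade this $\R^d$-equicontinuity to genuine $\GB_d$-equicontinuity — one cannot skip it, since a $\GB_d$-neighbourhood of $e$ meets $\R^d$ in a \emph{relatively dense} set rather than in a small ball, so the two notions differ (e.g.\ small frequencies accumulating at $0$ are invisible to $\R^d$-translations of small size but not to $\GB_d$-translations near $e$). Given $\delta>0$, approximate $u_0$ in the $N_1$-norm by a trigonometric polynomial $p=\sum_{\l\in F_0}a_\l e^{2\pi i\l\cdot x}$, $F_0$ finite. Then $N_1(u_0(\cdot-x)-u_0)\le N_2(p(\cdot-x)-p)+2N_1(u_0-p)$ with $N_2(p(\cdot-x)-p)^2=\sum_{\l\in F_0}|a_\l|^2|e^{-2\pi i\l\cdot x}-1|^2$, so there is a neighbourhood $V$ of $e$ in $\GB_d$ — namely $V=\{\omega:\ |e_\l(\omega)-1|<\ve,\ \l\in F_0\}$ for suitable $\ve>0$, where $e_\l$ is the character of $\GB_d$ extending $x\mapsto e^{2\pi i\l\cdot x}$ — such that $2M\,N_1(u_0(\cdot-x)-u_0)\le\delta$ for every $x\in V\cap\R^d$. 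Hence $\sup_{t>0}\|L_xu(t,\cdot)-u(t,\cdot)\|_{L^2(\GB_d)}^2\le\delta$ for all $x\in V\cap\R^d$. Finally, for each fixed $t$ the map $\omega\mapsto\|L_\omega u(t,\cdot)-u(t,\cdot)\|_{L^2(\GB_d)}^2$ is continuous on $\GB_d$ by strong continuity of the regular representation, and $\overline{V\cap\R^d}\supseteq V$ because $V$ is open and $\R^d$ is dense; therefore the estimate $\le\delta$ extends from $V\cap\R^d$ to all of $V$, and, being uniform in $t$, yields $\sup_{\omega\in V}\sup_{t>0}\|L_\omega u(t,\cdot)-u(t,\cdot)\|_{L^2(\GB_d)}^2\le\delta$. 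This is the required uniform equicontinuity, and the lemma follows.

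The only delicate point is precisely this passage from ordinary spatial translations to the generalized translations of $\GB_d$: Proposition~\ref{P:1.1} controls only $N_1(u_0(\cdot-x)-u_0)$, so one must exploit that, after trigonometric approximation of $u_0$, this quantity is already small on a whole $\GB_d$-neighbourhood of $e$ (i.e.\ on a relatively dense set of $x$'s, cut out by finitely many characters), and then fill in the rest of the neighbourhood using density of $\R^d$ in $\GB_d$ and continuity of the regular representation; everything else is the standard compactness criterion on a compact group.
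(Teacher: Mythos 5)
Your proof is correct, and it rests on the same two pillars as the paper's own argument: the translation estimate $N_2(u(t,\cdot-x)-u(t,\cdot))^2\le 2\|u\|_\infty N_1(u_0(\cdot-x)-u_0(\cdot))$ obtained from Proposition~\ref{P:1.1}, and compactness via smoothing with an approximate identity on $\GB_d$. The differences are in the packaging. Where you invoke the Weil/Kolmogorov--Riesz criterion on a compact group as a black box, the paper reproves it in the $\AP(\R^d)$ language: it forms $g_t^\a=\rho_\a*u(t,\cdot)$ explicitly, shows that for each fixed $\a$ the family $\{g_t^\a\}_{t>0}$ is relatively compact in $\AP(\R^d)$ by verifying the hypotheses of Lyusternik's criterion (equicontinuity in $x$ plus the ``equi-almost-periodicity'' inherited from $\rho_\a$), proves via \eqref{e4.26} that $g_t^\a\to u(t,\cdot)$ in $\BAP^2(\R^d)$ uniformly in $t$, and concludes with a diagonal extraction. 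The second difference is where the ``$\R^d$-translations versus $\GB_d$-translations'' issue is resolved: you make it explicit by approximating $u_0$ by a trigonometric polynomial and cutting out a $\GB_d$-neighbourhood of the identity with finitely many characters, then filling in by density of $\R^d$ and strong continuity of the regular representation; the paper absorbs the same point into the choice of $\rho_\a$ supported in shrinking neighbourhoods of $0$ for the topology generated by $\A_u$, together with the continuity of translation on $L^1(\GB_d)$ that underlies \eqref{e4.26} and Lemma~\ref{L:3.3}. Your version has the merit of isolating the one genuinely delicate step; the paper's stays entirely within the classical almost-periodic toolkit and avoids appealing to the abstract criterion.
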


\begin{proof} We first observe that, by Proposition~\ref{P:1.1}, we have, for any $h\in\R^d$,
\begin{equation}\label{e4.24}
\Me(|u(t,\cdot+h)-u(t,\cdot)|^2)\le 2\|u\|_\infty\Me(|u_0(\cdot+h)-u_0(\cdot)|).
\end{equation}
 Identifying functions with their Gelfand transform, this may also be written as
\begin{equation}\label{e4.25}
\int_{\GB_d}|u(t,\z+h)-u(t,\z)|^2\,d\mm(\z)\le   2\|u\|_\infty\int_{\GB_d}|u_0(\z+h)-u_0(\z)|\,d\mm(\z).
\end{equation}
Let $\rho_\a$ be an approximation of the identity in $\AP(\R^d)$ and, as just discussed in the paragraph before the statement of the lemma, since we are only interested in applying $\rho_\a$ to functions which belong to the closure in the $\BAP$-norm of  the space $A_u$, we may assume that $\a$ runs through $\N$ or a countable decreasing family of neighborhoods of 0.  Now,  for each $\a$,  the set of almost periodic functions $\{ g_t^\a(x):=[\rho_\a*u(t, \cdot)](x)\,:\, t>0\}$ is an equicontinuous family of almost periodic functions. Indeed, again by Proposition~\ref{P:1.1}, we have
\begin{multline*}
|g_t^\a(x+h)-g_t^\a(x)|\le \|\rho_\a\|_\infty\Me(|u(t,\cdot+h)-u(t,\cdot)|)\\ \le\|\rho_\a\|_\infty\Me(|u_0(\cdot+h)-u_0(\cdot)|).
\end{multline*}

 Moreover, the set of almost periodic functions $\{ g_t^\a\}_{t>0}$ has the following property: given any $\ve>0$, there exist $\l_1,\cdots,\l_N\in\R^d$, such that, for any $\l\in\R^d$, there is $\l_j$, with $j\in\{1,\cdots,N\}$, with $\|g_t^\a(\cdot+\l)-g_t^\a(\cdot+\l_j)\|_{\infty}<\ve$, for all $t>0$. Indeed,  since
$$
g_t^\a(x+\l)=\Me(\rho_\a(\cdot+\l)u(t,x-\cdot)) 
$$
this follows from the almost periodicity of $\rho_\a$, which guarantees that, given $\ve'>0$, we have $\l_1,\cdots,\l_N\in\R^d$ so that, for any $\l\in\R^d$, there exists $\l_j$, $j\in\{1,\cdots,N\}$, with $\|\rho(\cdot+\l)-\rho(\cdot+\l_j)\|_{\infty}<\ve'$. Therefore,
$$
|g_t^\a(x+\l)-g_t^\a(x+\l_j)|\le \|\rho(\cdot+\l)-\rho(\cdot+\l_j)\|_\infty\Me(|u(t,\cdot)|)< \Me(|u_0|)\ve',
$$
where we use Proposition~\ref{P:1.1}. Hence, we can invoke a well known criterion by Lyusternik (see, e.g., \cite{LZ}, \cite{AF}) to conclude that, for each fixed $\a$, the set $\{g_t^\a\}_{t>0}$ is relatively compact in $\AP(\R^d)$. Now, we have
\begin{align}\label{e4.26}
\Me(|\rho_\a*u(t,\cdot)-u(t,\cdot)|^2)&\le2\|u\|_\infty \int_{\GB_d} \left |\int_{\GB_d}\rho_\a(\z)(u(t,\om-\z)-u(t,\om))\,d\mm(\z)\right |\,d\mm(\om)\\
      &\le 2\|u\|_\infty\int_{\GB_d} \int_{\GB_d}\rho_\a(\z)|u(t,\om-\z)-u(t,\om)|\,d\mm(\z)\,d\mm(\om)\nonumber\\
      &\le 2\|u\|_\infty\sup_{\z\in\supp \rho_\a}\int_{\GB_d}|u(t,\om-\z)-u(t,\om)|\,d\mm(\om)\nonumber\\
      &\le 2\|u\|_\infty \sup_{\z\in\supp \rho_\a}\int_{\GB_d}|u_0(\om-\z)-u_0(\om)|\,d\mm(\om),\nonumber
      \end{align}
where we use Fubini and, once more,  Proposition~\ref{P:1.1}. Concerning the latter,  it implies 
$$
\int_{\GB_d}|u(t,\om-\z)-u(t,\om)|\,d\mm(\om)\le \int_{\GB_d}|u_0(\om-\z)-u_0(\om)|\,d\mm(\om)
$$
first for all $\z\in\R^d$, and then for all $\z\in \GB_d$, by the continuity in $\z\in\GB_d$ of the translations  $T_\z v(\cdot)=v(\cdot-\z)$ on  $L^1(\GB_d)$, recalling that $\GB_d$ is a topological group and $\,d\mm$ is its Haar measure.  

Now, since $\{\supp\rho_\a\}$ is a countable decreasing family of neighborhoods of $0$ converging to $\{0\}$, we deduce that the set $\{g_t^\a\}_{t>0}$ is as close as we wish to $\{u(t,\cdot)\}_{t>0}$, in $\BAP^2(\R^d)$. Hence, the relative compactness of $\{g_t^\a\}$ in $\AP(\R^d)$, for arbitrary $\a$,  implies the relative compactness of $\{u(t,\cdot)\}_{t>0}$ in $\BAP^2(\R^d)$.  Indeed, let $u(t_k,\cdot)$ be a bounded sequence in 
$\BAP^2(\R^d)$. For each $\a$, which for simplicity we may assume to run through $\N$,  $\{g_{t_k}^\a\}$  is relatively compact and so it possesses a converging subsequence in $\AP(\R^d)$. So, for $\a=1$, there is a subsequence $t_{1k}$ such that $g_{t_{1k}}^1$ converges in the $\sup$-norm as $k\to\infty$. Similarly, for $\a=2$, we may extract a subsequence $\{t_{2k}\}\subset\{t_{1k}\}$ such that  $g_{t_{2k}}^1$ converges in the $\sup$-norm, and so on. We claim that $u(t_{kk},\cdot)$ is a convergent sequence in $\BAP^2(\R^d)$.  In fact, given $\ve>0$, for $\a$ sufficiently large, $[| g_{t_{\a k}}^\a-u(t_{\a k},\cdot)|]_2<\ve/3$, by \eqref{e4.26}, for all $k\in\N$.
Now, since $g_{t_{\a k}}^\a$ converges, for $k, l>N_0>\a$, for some $N_0$ sufficiently large, $\|g_{t_{\a k}}^\a-g_{t_{\a l}}^\a\|_\infty <\ve/3$. Hence, we have, for $k, l>N_0$,     
\begin{eqnarray*}
&&[|u(t_{kk},\cdot)-u(t_{ll},\cdot)|]_2\\
&& \le [|u(t_{kk},\cdot)-g_{t_{k k}}^k(\cdot)|]_2+[|g_{t_{kk}}^k(\cdot)-g_{t_{ll}}^l(\cdot)|]_2+[|g_{t_{ll}}^l(\cdot)-u(t_{ll},\cdot)|]_2\\
&&<\frac{\ve}3+\frac{\ve}3+\frac{\ve}3=\ve.
\end{eqnarray*}

\end{proof}

Now, the compactness of $\{u(t,\cdot)\,:\, t>0\}$ in $\BAP^2(\R^d)$ implies, in particular, the following.

\begin{lemma}\label{L:3.2} If we write, for any $t\ge0$,  
\begin{equation}\label{e4.27}
u(t,x)=\sum_{\l\in\Lambda_u}a_\l(t)e^{2\pi i\l\cdot x},
\end{equation}
with equality in the sense of $\BAP^2(\R^d)$,  then, given any $\ve>0$, there exists a finite set $F_\ve\subset \Lambda_u$, such that  
\begin{equation}\label{e4.28}
\sum_{\l\in\Lambda_u\setminus F_\ve}|a_\l(t)|^2 <\ve,\quad\text{for all $t\ge0$}.
\end{equation}
\end{lemma}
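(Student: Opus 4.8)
The plan is to read off the conclusion from the \emph{total boundedness} of $\{u(t,\cdot):t>0\}$ in $\BAP^2(\R^d)$ — which follows from Lemma~\ref{L:3.1}, since $\BAP^2(\R^d)\cong L^2(\GB_d;\mm)$ is complete — combined with Parseval's identity in this Hilbert space. Recall that the exponentials $e_\l(x):=e^{2\pi i\l\cdot x}$, $\l\in\R^d$, form an orthonormal system in $\BAP^2(\R^d)$ (indeed $\Me(e_\l\overline{e_\mu})=\Me(e^{2\pi i(\l-\mu)\cdot x})$ equals $1$ if $\l=\mu$ and $0$ otherwise), and, by Lemma~\ref{L:4.3}, for every $t\ge0$ one has $\Sp(u(t,\cdot))\subset\Lambda_u$ with $\Lambda_u$ countable; thus \eqref{e4.27} is the orthonormal expansion of $u(t,\cdot)$ in the closed span $\H$ of $\{e_\l:\l\in\Lambda_u\}$, and Parseval gives $[|u(t,\cdot)|]_2^2=\sum_{\l\in\Lambda_u}|a_\l(t)|^2<\infty$. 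For a finite set $F\subset\Lambda_u$ let $P_F$ denote the orthogonal projection of $\BAP^2(\R^d)$ onto the closed span of $\{e_\l:\l\in\Lambda_u\setminus F\}$; then $P_Fu(t,\cdot)=\sum_{\l\in\Lambda_u\setminus F}a_\l(t)e_\l$, so $[|P_Fu(t,\cdot)|]_2^2=\sum_{\l\in\Lambda_u\setminus F}|a_\l(t)|^2$, and $[|P_Fg|]_2\le[|g|]_2$ for every $g\in\BAP^2(\R^d)$.

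Now fix $\ve>0$. First note that the compactness extends to $t=0$: since $|u(t,x)-u_0(x)|\le 2\|u_0\|_\infty$ a.e.\ by \eqref{e2.max}, we have $[|u(t,\cdot)-u_0|]_2^2\le 2\|u_0\|_\infty\, N_1(u(t,\cdot)-u_0)$, and the right-hand side tends to $0$ as $t\to0+$ by the continuity of $u$ into $\BAP(\R^d)$ (Lemma~\ref{L:4.2}); hence $\{u(t,\cdot):t\ge0\}$ is relatively compact, therefore totally bounded, in $\BAP^2(\R^d)$. Choose a finite $(\sqrt\ve/3)$-net $u(t_1,\cdot),\dots,u(t_n,\cdot)$ for this set. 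For each $j$, since $\sum_{\l\in\Lambda_u}|a_\l(t_j)|^2=[|u(t_j,\cdot)|]_2^2<\infty$, pick a finite $F_j\subset\Lambda_u$ with $\sum_{\l\in\Lambda_u\setminus F_j}|a_\l(t_j)|^2<\ve/9$, and set $F_\ve:=\bigcup_{j=1}^n F_j$, which is a finite subset of $\Lambda_u$. Since $F_\ve\supset F_j$, we get $\sum_{\l\in\Lambda_u\setminus F_\ve}|a_\l(t_j)|^2<\ve/9$ for every $j$.

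Finally, let $t\ge0$ be arbitrary and choose $j$ with $[|u(t,\cdot)-u(t_j,\cdot)|]_2<\sqrt\ve/3$. Using that $P_{F_\ve}$ is linear and norm non-increasing,
\begin{multline*}
\Bigl(\sum_{\l\in\Lambda_u\setminus F_\ve}|a_\l(t)|^2\Bigr)^{1/2}=[|P_{F_\ve}u(t,\cdot)|]_2\le[|P_{F_\ve}(u(t,\cdot)-u(t_j,\cdot))|]_2+[|P_{F_\ve}u(t_j,\cdot)|]_2\\
\le[|u(t,\cdot)-u(t_j,\cdot)|]_2+\Bigl(\sum_{\l\in\Lambda_u\setminus F_\ve}|a_\l(t_j)|^2\Bigr)^{1/2}<\frac{\sqrt\ve}{3}+\frac{\sqrt\ve}{3},
\end{multline*}
whence $\sum_{\l\in\Lambda_u\setminus F_\ve}|a_\l(t)|^2<\tfrac{4}{9}\ve<\ve$, which is \eqref{e4.28}. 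There is no serious obstacle here: the argument is the standard fact that a totally bounded subset of a Hilbert space has uniformly small tails with respect to a fixed orthonormal system, and the only points needing a word are the identification of $\BAP^2(\R^d)$ with the Hilbert space $L^2(\GB_d)$ (so that Parseval and the contractivity of $P_{F_\ve}$ apply) and the passage from $\{u(t,\cdot):t>0\}$ to $\{u(t,\cdot):t\ge0\}$, both dispatched above.
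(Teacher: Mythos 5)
Your proof is correct and follows essentially the same route as the paper's: relative compactness of $\{u(t,\cdot)\}$ in $\BAP^2(\R^d)$ (Lemma~\ref{L:3.1}) yields a finite net, the tails of the finitely many net elements are cut by a single finite set $F_\ve$, and the triangle inequality transfers the bound to all $t$. Your version is in fact slightly more careful than the paper's, which states the final estimate as $\sum_{\l\in\Lambda_u\setminus F_\ve}|a_\l(t)|^2<\Me(|u(t,\cdot)-g_\nu|^2)+\sum_{\l\in\Lambda_u\setminus F_\ve}|a_\l(t_\nu)|^2$ without accounting for the cross term; your use of $\sqrt{\ve}/3$-nets with the norm form of the triangle inequality avoids this, and you also justify the passage from $t>0$ (Lemma~\ref{L:3.1}) to $t\ge0$, which the paper leaves implicit.
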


\begin{proof}  By compactness,  given any $\ve>0$, we may find $g_1:=u(t_1,\cdot),\cdots,g_m=u(t_m,\cdot)$, such that for any $t\ge0$,  $\Me(|u(t,\cdot)-g_\nu|^2)<\ve/4$, for some $\nu\in\{1,\cdots,m\}$.  We observe that, given any $\ve>0$,   we may find a finite set $F_\ve$ such that 
$$
\sum_{\l\in\Lambda_u\setminus F_\ve}|a_\l(t_\nu)|^2<\ve/4,\quad \nu=1,\cdots,m.  
$$

Therefore,
$$
\sum_{\l\in\Lambda_u\setminus F_\ve}|a_\l(t)|^2\le 2\Me(|u(t,\cdot)-g_\nu(\cdot)|^2)+2\sum_{\l\in\Lambda_u\setminus F_\ve}|a_\l(t_\nu)|^2<\frac{\ve}2+\frac{\ve}2=\ve.
$$ 

\end{proof}

We are going to define entropy solution of the problem corresponding to \eqref{e1.1}-\eqref{e1.2} in $\GB_d$. A similar procedure was carried out in \cite{Pv2} for the case of the hyperbolic conservation laws. We point out that in \cite{Pv2} the role of the approximate identity is played by the Bochner-Fej\'er kernels, whose explicit formula in the multidimensional case is given in \cite{Pv2}, based on that for the one-dimensional case given in \cite{B}. Here we use the approximate identity $\rho_\a$, with $\a$ running through the countable set of neighborhoods of 0 in $\GB_d$ forming a basis for the topology generated by $\A_u$, because they are supported in the corresponding neighborhoods and this fact simplifies some arguments.    

\begin{lemma} \label{L:3.3} Let $v\in\BAP(\R^d)$,  and $\rho_\a$,  be an approximation of the identity with $\a$ running through a countable basis of neighborhoods of 0 in the topology generated by $\A_v$. Then
\begin{equation}\label{e4.29}
\lim_{\a\to\infty} \Medint_{\R^d}\left(\Medint_{\R^d}|v(x)-v(y)|\rho_\a(x-y)\,dy\right)\,dx=0.
\end{equation}
Equivalently, identifying $v$ and $\rho_\a$ with their Gelfand transforms, this may be written as  
\begin{equation}\label{e4.30}
\lim_{\a\to\infty} \int_{\GB_d}\left(\int_{\GB_d}|v(\om)-v(\z)|\rho_\a(\om-\z)\,d\mm(\om)\right)\,d\mm(\z)=0.
\end{equation}
\end{lemma}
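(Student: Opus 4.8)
The plan is to establish the $\GB_d$-version \eqref{e4.30}; the $\R^d$-version \eqref{e4.29} is then just its transcription through the Gelfand transform. Writing $\Phi_\alpha(v):=\int_{\GB_d}\int_{\GB_d}|v(\omega)-v(\zeta)|\,\rho_\alpha(\omega-\zeta)\,d\mm(\omega)\,d\mm(\zeta)$, I would run the standard three-step density argument: a uniform-in-$\alpha$ Lipschitz estimate for $v\mapsto\Phi_\alpha(v)$ in the $\BAP^1$-norm, the convergence $\Phi_\alpha(P)\to0$ for $P$ a trigonometric polynomial with frequencies in $\Sp(v)$, and then passage to a general $v$ by approximation.

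For the first step I would use that the Haar measure $\mm$ is translation invariant, so $\int_{\GB_d}\rho_\alpha(\omega-\zeta)\,d\mm(\zeta)=\Me(\rho_\alpha)=1$ for every $\omega\in\GB_d$ (and symmetrically in $\zeta$); then the elementary inequality $\bigl|\,|a-b|-|a'-b'|\,\bigr|\le|a-a'|+|b-b'|$ together with Fubini gives, for all $v,w\in\BAP(\R^d)$ and all $\alpha$, $|\Phi_\alpha(v)-\Phi_\alpha(w)|\le 2\,[|v-w|]_1$. In particular $\Phi_\alpha$ is $2$-Lipschitz uniformly in $\alpha$, which reduces the whole lemma to showing $\Phi_\alpha(P)\to0$ on a dense subclass.

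For the second step I would take $P=\sum_{j=1}^N c_j\chi_{\lambda_j}$ with the $\lambda_j\in\Sp(v)$ distinct, where $\chi_\lambda$ is the character of $\GB_d$ corresponding to $e^{2\pi i\lambda\cdot x}$; such $P$ lies in $\A_v$. Changing variables $\omega=\zeta+\eta$ and applying Fubini yields $\Phi_\alpha(P)=\int_{\GB_d}\rho_\alpha(\eta)\bigl(\int_{\GB_d}|P(\zeta+\eta)-P(\zeta)|\,d\mm(\zeta)\bigr)\,d\mm(\eta)$. Since $P(\zeta+\eta)-P(\zeta)=\sum_j c_j\chi_{\lambda_j}(\zeta)\bigl(\chi_{\lambda_j}(\eta)-1\bigr)$ and the characters are orthonormal in $L^2(\GB_d)$, using $\|\cdot\|_{L^1}\le\|\cdot\|_{L^2}$ on the probability space $(\GB_d,\mm)$, Parseval in $\zeta$, and the Cauchy–Schwarz inequality in $\eta$ against the probability measure $\rho_\alpha\,d\mm$, I would obtain
\[
\Phi_\alpha(P)^2\le\sum_{j=1}^N|c_j|^2\int_{\GB_d}\rho_\alpha(\eta)\,|\chi_{\lambda_j}(\eta)-1|^2\,d\mm(\eta)=\sum_{j=1}^N|c_j|^2\bigl(2-2\operatorname{Re}\widehat{\rho_\alpha}(\lambda_j)\bigr),
\]
where $\widehat{\rho_\alpha}(\lambda)=\Me\bigl(\rho_\alpha e^{-2\pi i\lambda\cdot x}\bigr)$. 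A one-line computation (substitution in the convolution integral) gives $\rho_\alpha*\chi_{\lambda_j}=\widehat{\rho_\alpha}(\lambda_j)\chi_{\lambda_j}$; since $\chi_{\lambda_j}\in\A_v$ and $\alpha$ ranges over a basis of neighbourhoods of $0$ in the topology generated by $\A_v$, the defining property of the approximate identity gives $\rho_\alpha*\chi_{\lambda_j}\to\chi_{\lambda_j}$ uniformly, hence $\widehat{\rho_\alpha}(\lambda_j)\to1$. As the sum is finite, $\Phi_\alpha(P)\to0$. Finally, given $\ve>0$ I would approximate $v$ in $\BAP(\R^d)=L^1(\GB_d)$ by a Bochner--Fej\'er polynomial $P$ of $v$ (so $\Sp(P)\subset\Sp(v)$, see \cite{B}) with $[|v-P|]_1<\ve/4$, and conclude from the Lipschitz bound that $\Phi_\alpha(v)\le 2[|v-P|]_1+\Phi_\alpha(P)<\ve$ for all large $\alpha$; since $\ve>0$ is arbitrary, \eqref{e4.30} follows.

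The step I expect to be the main obstacle is the convergence $\Phi_\alpha(P)\to0$ of the second step — not its computation, which is short, but finding the right route to it. One cannot appeal to uniform continuity of $v$ on the compact group $\GB_d$ (since $v\in\BAP(\R^d)$ need not be continuous there), and, even for trigonometric $P$, one cannot control the shrinking of $\supp\rho_\alpha$ in the full Bohr topology once $\alpha$ has been restricted to the coarser, non-Hausdorff topology generated by $\A_v$. The point is to collapse the whole question to the finitely many scalar convergences $\widehat{\rho_\alpha}(\lambda_j)\to1$, one for each frequency occurring in $P$, and these are exactly what the approximate-identity property of $\{\rho_\alpha\}$ supplies for characters belonging to $\A_v$.
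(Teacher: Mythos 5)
Your proof is correct, and it shares the paper's overall skeleton --- the uniform $2$-Lipschitz bound $|\Phi_\a(v)-\Phi_\a(w)|\le 2[|v-w|]_1$ followed by reduction to Bochner--Fej\'er polynomials of $v$, which lie in $\A_v$ --- but it handles the dense class by a genuinely different mechanism. The paper disposes of the trigonometric-polynomial case by uniform continuity on $\GB_d$: since $\supp\rho_\a\subset\bar\a$ and the $\bar\a$ shrink to $\{0\}$, the inner integral $\int_{\GB_d}|v_r(\om)-v_r(\z)|\rho_\a(\om-\z)\,d\mm(\z)$ is $<\ve$ pointwise in $\om$ once $\a$ is large. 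This is shorter, but it uses the support property of the approximate identity and, implicitly, the fact that the approximants belong to $\A_v$, so that their uniform continuity is with respect to the same coarse (non-Hausdorff) uniformity in which the supports shrink --- precisely the subtlety you flag in your closing paragraph, and one which the paper's phrase ``$v$ is uniformly continuous in $\GB_d$'' for $v\in\AP(\R^d)$ passes over silently. Your route instead collapses the dense case to the finitely many scalar limits $\widehat{\rho_\a}(\lambda_j)\to 1$ via Parseval and the multiplier identity $\rho_\a * e^{2\pi i\lambda_j\cdot(\cdot)}=\widehat{\rho_\a}(\lambda_j)\,e^{2\pi i\lambda_j\cdot(\cdot)}$; this needs only the convolution property $\rho_\a*g\to g$ for $g\in\A_v$ and no support condition at all, at the price of an $L^1\le L^2$ step plus Parseval and Cauchy--Schwarz. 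Both arguments are sound; yours is the more robust (it would survive replacing $\rho_\a$ by, say, the Bochner--Fej\'er kernels used in \cite{Pv2}, which are not supported in small neighborhoods), while the paper's is the more economical for the particular $\rho_\a$ it constructs.
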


\begin{proof} Since the Gelfand transform is an isomorphism between $\AP(\R^d)$ and $C(\GB_d)$ and between $\BAP^p(\R^d)$ and $L^p(\GB_d)$, $1\le p\le \infty$, where the Haar measure $\mm$ in $\GB_d$ is induced by the mean-value over $\R^d$, it suffices to prove \eqref{e4.30}. Now, any function $v\in\BAP(\R^d)$ may be approximated in the
$\BAP$-norm by functions $v_r\in\A_v$, such as its Bochner-Fej\'er trigonometrical polynomials (see, {\em e.g.}, \cite{B}), which is the same to say that $v_r\in\A_v$ and $v_r\to v$
in $L^1(\GB_d)$.  Using the triangular inequality $||v(\om)-v(\z)|-|v_r(\om)-v_r(\z)||\le |v(\om)-v_r(\om)|+|v(\z)-v_r(\z)|$ we reduce the problem to verifying \eqref{e4.30} when $v\in\AP(\R^d)$.
But, for $v\in\AP(\R^d)$, $v$ is  uniformly continuous in $\GB_d$, in the sense of the natural uniformities in topological groups (see \cite{Loo}), and so, given $\ve>0$,  $|v(\om)-v(\z)|<\ve$, for $\z\in \om+\V_\a$, where $\V_\a$ is a neighborhood of $0$ sufficiently small, for all $\om\in\GB_d$. So,
$$
\int_{\GB_d}|v(\om)-v(\z)|\rho_\a(\om-\z)\,d\mm(\z) <\ve,
$$
for $\a$ sufficiently large, uniformly in $x\in\GB_d$, which proves \eqref{e4.30} in case $v\in \AP(\R^d)$, and, as already shown, this suffices to conclude the proof in the general case where $v\in\BAP(\R^d)$. 

\end{proof}

For a function $f\in C((0,\infty)\X\GB_d)$ we define the partial derivatives $f_t,f_{x_i}$ in the usual way 
$$
f_t(t,x)=\lim_{h\to0}\frac{f(t+h,\om)-f(t,\om)}h,\quad f_{x_i}(t,\om)=\lim_{h\to0}\frac{f(t,\om+h e_i)-f(t,\om)}h,
$$
whenever these limits exist, for some $(t,\om)\in(0,\infty)\X\GB_d$, where $e_i$ is the $i$-the element of the canonical basis.  When these derivatives exist at all $(0,\infty)\X\GB_d$
we may define derivatives $D^\b f$, for any multi-index $\b=(k_0,k_1,\cdots,k_d)$ in an inductive way, that is, assuming that $D^\b f$ exists for all multi-index with $|\b|<k$, we define $D^\g f$, for $|\g|=k+1$, writing $D^\g f=D^{\b_0}D^\b f$, with $|\b_0|=1$, $|\b|=k$, and defining $D^\b f$ by the limits above, whenever they exist,  with $D^\b f$ in the place
 of $f$.     

We denote by $C^k((0,\infty)\X\GB_d)$ the functions $f\in C((0,\infty)\X\GB_d)$ such that $D^\b f\in C((0,\infty)\X\GB_d)$, for all multi-indices $\b=(k_0,k_1,\cdots,k_d)$ with $|\b|\le k$,  and by $C_c^k((0,\infty)\X\GB_d)$ the functions in $C^k((0,\infty)\X\GB_d)$ with support compact in $(0,\infty)\X\GB_d$. We denote $f\in C_c^\infty((0,\infty)\X\GB_d)$ if
$f\in C_c^k((0,\infty)\X\GB_d)$ for all $k\in\N$. 

Given $f\in L_\loc^1((0,\infty)\X\GB_d)$ we say that $f^\b=D^\b f$ in the sense of distributions in $(0,\infty)\X\GB_d$, if 
\begin{equation}\label{e4.30'}
\int_{\R_+}\int_{\GB_d} f D^\d\phi\, d\mm(\om)\,dt=(-1)^{|\b|}\int_{\R_+}\int_{\GB_d} f^\b \phi \, d\mm(\om)\,dt,
\end{equation}
for all $\phi\in C_c^\infty((0,\infty)\X\GB_d)$. Equation \eqref{e4.30'} is coherent with the usual integration by parts for functions in $C_c^\infty((0,\infty)\X\GB_d)$. Indeed, it suffices to check the case of a single space derivative. For $\phi,\psi\in C_c^\infty((0,\infty)\X\GB_d)$, we have
\begin{multline*}
\int_{\R_+}\int_{\GB_d} \psi \po_{x_i}\phi\, d\mm(\om)\,dt=\lim_{R\to\infty}\frac1{R^d}\int_0^\infty \int_{\I_R}\psi\po_{x_i}\phi\,dx\,dt\\
=-\lim_{R\to\infty}\frac1{R^d}\int_0^\infty \int_{\I_R}\po_{x_i}\psi\,\phi\,dx\,dt +\lim_{R\to\infty}\frac1{R^d}\int_0^\infty \int_{\po\I_R}\psi\phi\nu_i\,dx\,dt\\
=-\lim_{R\to\infty}\frac1{R^d}\int_0^\infty \int_{\I_R}\po_{x_i}\psi\,\phi\,dx\,dt\\
=-\int_{\R_+} \int_{\GB_d}\po_{x_i}\psi\,\phi\,d\mm(\om)\,dt,  
\end{multline*}
where $\nu_i$ is the $i$-th component of the unity outer normal to $\po\I_R$.

We are going to define entropy solution for the problem
\begin{align}
& \po_tu+\nabla\cdot \bff(u)=\nabla^2: \bfA(u),\qquad x\in\GB_d,\quad t>0,\label{e4.31}\\
&u(0,\cdot)=u_0\in L^\infty(\GB_d). \label{e4.32}
\end{align}

\begin{definition}\label{D:4.1}  An entropy solution for \eqref{e4.31},\eqref{e4.32},  is a function $u(t,\om)\in L^\infty((0,\infty)\X\GB_d)$ such that 
\begin{enumerate}
\item[(i)] (Regularity) 
\begin{multline}\label{e4.36}
\sum_{i=1}^d\po_{x_i}\b_{ik}(u) \in L_\loc^2((0,\infty)\X \GB_d),\\ \text{for $k=1,\cdots,d$, for $\b_{ik}(u)=\int^u\s_{ik}(v)\,dv$}.
\end{multline}

\item[(ii)] (Chain Rule)  For any function $\psi\in C(\R)$ and any $k=1,\cdots, d$ the following chain rule holds:
\begin{multline}\label{e4.38}
\sum_{i=1}^d\po_{x_i}\b_{ik}^\psi(u)=\psi(u)\sum_{i=1}^d\po_{x_i}\b_{ik}(u)\in L_\loc^2((0,\infty)\X \GB_d),\\ \quad\text{for $k=1,\cdots,d$, for $(\b_{ik}^\psi)'=\psi\b_{ik}'$}.
\end{multline}
\item[(iii)] (Entropy Inequality) For any convex $C^2$ function $\eta:\R\to\R$, and ${\bfq}'(u)=\eta'(u)\bff'(u)$, $r_{ij}'(u)=\eta'(u)a_{ij}(u)$, we have
\begin{equation}\label{e4.39}
\po_t\eta(u)+\nabla_x\cdot \bfq(u)-\sum_{ij=1}^d \po_{x_ix_j}^2r_{ij}(u)
\le -\eta''(u)\sum_{k=1}^d\left(\sum_{i=1}^d\po_{x_i}\b_{ik}(u)\right)^2,
\end{equation}
in the sense of the distributions in $(0,\infty)\X\GB_d$. 
\item[(iv)] (Initial Condition) 
\begin{equation}\label{e4.40}
\lim_{t\to0+}\intl_{\GB_d}|u(t,\om)-u_0(\om)|\,d\mm(\om)=0.
\end{equation}
\end{enumerate}
\end{definition}

\begin{theorem}[Existence]\label{T:3.1} There exists an entropy solution to the problem \eqref{e4.31}, \eqref{e4.32}.
\end{theorem}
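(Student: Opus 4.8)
The plan is to obtain the entropy solution on $\GB_d$ simply as the Gelfand transform of the entropy solution on $\R^d$ constructed in Section~\ref{S:2}. Given $u_0\in L^\infty(\GB_d)$, identify it with its representative in $\BAP^1(\R^d)\cap L^\infty(\R^d)$ and let $u(t,x)$ be the unique entropy solution of \eqref{e1.1}-\eqref{e1.2} with that initial datum (Lemmas~\ref{L:2.2}, \ref{L:2.3}). By Lemmas~\ref{L:4.1} and \ref{L:4.2}, $u\in C([0,\infty);\BAP(\R^d))\cap L^\infty(\R_+^{d+1})$ and $u(t,\cdot)\in\BAP^\infty(\R^d)$ for every $t\ge0$, so its Gelfand transform defines $\hat u\in L^\infty((0,\infty)\X\GB_d)$ with $\hat u(t,\cdot)\in L^\infty(\GB_d)$. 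Condition (iv) of Definition~\ref{D:4.1} is then immediate: it is exactly the continuity of $t\mapsto u(t,\cdot)$ into $\BAP^1(\R^d)$ at $t=0$ (Lemma~\ref{L:4.2}) read through the isometry $\BAP^1(\R^d)\cong L^1(\GB_d;\mm)$. It remains to verify (i), (ii), (iii), and the delicate point — which I expect to be the main obstacle — is (i): the $\R^d$-theory provides $\sum_i\po_{x_i}\b_{ik}(u)$ only in $L^2_\loc$, and $\BAP^2$-convergence does not control $L^2_\loc$-convergence, so one cannot merely transfer the local regularity and must instead produce an honest element of $L^2((0,\infty)\X\GB_d)$ through an approximation in which these quantities are genuinely almost periodic.

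To that end I would approximate through the viscous problems \eqref{e1.1ve}-\eqref{e1.2ve}, first taking $u_0$ a trigonometric polynomial. The solutions $u^\ve$ are then $C^\infty$ and, by the argument of Lemma~\ref{L:4.1} applied to the viscous equation (for which the weighted estimate \eqref{e2.9} holds uniformly in $0<\ve\le1$), uniformly in $\ve$ almost periodic in $x$ together with all their space derivatives, since $e^{t\ve\gD}$ maps $\AP(\R^d)$ into $\AP(\R^d)$; in particular $\sum_i\po_{x_i}\b_{ik}(u^\ve(t,\cdot))\in\AP(\R^d)$. Multiplying \eqref{e1.1ve} by $u^\ve$ and taking the mean value in $x$ (total space-derivatives having mean zero) gives the energy identity
\begin{equation*}
\tfrac12\Me\bigl(u^\ve(t,\cdot)^2\bigr)+\int_0^t\Me\Bigl(\sum_{k=1}^d\bigl(\sum_i\po_{x_i}\b_{ik}(u^\ve)\bigr)^2\Bigr)\,ds+\ve\int_0^t\Me\bigl(|\gradx u^\ve|^2\bigr)\,ds=\tfrac12\Me(u_0^2),
\end{equation*}
so $\{\sum_i\po_{x_i}\b_{ik}(u^\ve)\}_\ve$ is bounded in $L^2((0,\infty)\X\GB_d)$ by $\tfrac12\Me(u_0^2)<\infty$. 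By the analogue of Proposition~\ref{P:1.1} for the viscous problem and the $L^1_\loc$-compactness established in the proof of Lemma~\ref{L:4.2}, $u^\ve\to u$ in $L^1_\loc((0,\infty)\X\R^d)$; being uniformly $S$-almost periodic, the convergence upgrades to $L^2_\loc((0,\infty)\X\GB_d)$, hence $\b_{ik}(u^\ve)\to\b_{ik}(u)$ there and their distributional $x$-derivatives converge in the sense of distributions on $(0,\infty)\X\GB_d$. Passing to a weak limit along a subsequence in $L^2((0,\infty)\X\GB_d)$ identifies that weak limit with $\sum_i\po_{x_i}\b_{ik}(\hat u)$, which proves (i); applying the same strong-in-$L^2_\loc$ / weak-in-$L^2$ convergence to the smooth identity $\sum_i\po_{x_i}\b^\psi_{ik}(u^\ve)=\psi(u^\ve)\sum_i\po_{x_i}\b_{ik}(u^\ve)$ proves (ii).

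For the entropy inequality (iii), still with $u_0$ a trigonometric polynomial, $u^\ve$ satisfies pointwise, for a convex $C^2$ entropy $\eta$,
\begin{equation*}
\po_t\eta(u^\ve)+\gradx\cdot\bfq(u^\ve)-\sum_{i,j}\po^2_{x_ix_j}r_{ij}(u^\ve)+\eta''(u^\ve)\sum_k\bigl(\sum_i\po_{x_i}\b_{ik}(u^\ve)\bigr)^2\le\ve\gD\eta(u^\ve);
\end{equation*}
testing against $0\le\phi\in C_0^\infty((0,\infty)\X\GB_d)$ — legitimate since all terms are almost periodic in $x$ and the $\GB_d$-derivatives of trigonometric test functions agree with the ordinary ones — and letting $\ve\to0$, the term $\ve\gD\eta(u^\ve)$ vanishes, the first three terms pass to the limit by the strong $L^2_\loc$-convergence of $\eta(u^\ve),\bfq(u^\ve),r_{ij}(u^\ve)$, and the dissipation term survives by weak lower semicontinuity of the $L^2$-norm applied to $\sqrt{\eta''(u^\ve)\phi}\,\sum_i\po_{x_i}\b_{ik}(u^\ve)$; this yields \eqref{e4.39}. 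Finally, for general $u_0\in L^\infty(\GB_d)$ one approximates by Bochner--Fej\'er polynomials $u_{0,n}$ and lets the corresponding solutions $u_n$ tend to $u$: Proposition~\ref{P:1.1} gives $N_1(u_n(t,\cdot)-u(t,\cdot))\le N_1(u_{0,n}-u_0)\to0$ uniformly in $t$, the energy identity passes to the limit keeping the $L^2((0,\infty)\X\GB_d)$-bounds uniform in $n$, so the strong/weak convergence argument delivers (i), (ii) and (iii) (the dissipation term again surviving by weak lower semicontinuity) for $u$ itself, completing the proof.
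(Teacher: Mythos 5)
Your overall strategy --- realizing the $\GB_d$-solution as the Gelfand transform of the $\R^d$-solution and reading condition (iv) off Lemma~\ref{L:4.2} --- is the same as the paper's, but your verification of (i)--(iii) of Definition~\ref{D:4.1} takes a genuinely different route. The paper never returns to the viscous approximation: it applies the entropy inequality \eqref{e1.9}, already available for $u$ on $\R^d$, to test functions $\phi(t,x)R^{-d}\varphi(x/R)$ and lets $R\to\infty$, using \eqref{e3.1} to turn every term into an integral over $\GB_d$; this gives \eqref{e4.39} directly (inequality \eqref{e4.41}), and the global bound \eqref{e4.36} then falls out by taking $\eta(u)=\frac12 u^2$ and a pure time cut-off in the transferred inequality (estimate \eqref{e4.42}), with the chain rule inherited from \eqref{e1.8}. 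You instead rebuild everything from the viscous problems \eqref{e1.1ve}--\eqref{e1.2ve} with trigonometric-polynomial data, derive the energy identity at the level of $u^\ve$, and pass to weak limits twice (in $\ve$, then along the Bochner--Fej\'er approximation of $u_0$). Both arguments ultimately rest on the same energy inequality, and your limit passages (strong $L^2(\GB_d)$ convergence of compositions times weak $L^2$ convergence of $\sum_i\po_{x_i}\b_{ik}(u^\ve)$, plus weak lower semicontinuity for the dissipation) are sound. The one point you should shore up is the almost periodicity of $u^\ve(t,\cdot)$ \emph{together with its first spatial derivatives}: the remark that $e^{t\ve\Delta}$ preserves $\AP(\R^d)$ is not by itself an argument for the quasilinear equation \eqref{e1.1ve}; for fixed $\ve>0$ one should instead combine uniqueness with continuous dependence in $C^1$ (parabolic regularity applied to the precompact family of translates of the trigonometric polynomial $u_0$, i.e.\ Bochner's criterion) to conclude $u^\ve(t,\cdot),\,\nabla_x u^\ve(t,\cdot)\in\AP(\R^d)$, which is what legitimizes the mean-value integration by parts in your energy identity; uniformity in $\ve$ is only needed for the inclusion intervals of $u^\ve$ itself, not of its derivatives. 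What your approach buys is a more self-contained construction of the dissipation terms as honest elements of $L^2((0,\infty)\X\GB_d)$ at every stage and a cleaner derivation of the chain rule (ii); the cost is the extra layer of approximation that the paper's scaling argument avoids.
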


\begin{proof}  Since $u_0\in L^\infty(\GB_d)$, then it is equivalent, via Gelfand transform, to a function, which we also denote $u_0$, in $\BAP(\R^d)\cap L^\infty(\R^d)$.
Let $u(t,x)$ be the entropy solution of \eqref{e1.1},\eqref{e1.2} with initial data $u_0$. First, applying  \eqref{e1.9} with $\eta(u)=\frac12 u^2$ to a suitable smooth approximation of  
$\varphi=\chi_{{}_{(0,T)\X \I_R}}$, for any $R, T>0$, we obtain that 
$$
\int_{(0,T)\X\I_R}\sum_{k=1}^d\left(\sum_{i=1}^d\po_{x_i}\b_{ik}(u)\right)^2\,dx\,dt\le C (R^d+ TR^{d-1}),
$$
for some $C>0$ independent of $R,T$, which gives
\begin{equation}\label{e3.cor1}
\limsup_{R\to\infty}\frac1{R^d}\int_0^T\int_{\I_R}\sum_{k=1}^d\left(\sum_{i=1}^d\po_{x_i}\b_{ik}(u)\right)^2\,dx\,dt\le C, \quad\text{for any $T>0$.}
\end{equation}
Let us denote $B_k=(\b_{1k}(u),\cdots,\b_{dk}(u))$ and $h_k=\div B_k$.  Let $\varphi\in C_c^\infty ((0,\infty)\X\GB_d)$, and we also denote by $\varphi$ its image by the inverse Gelfand transform defined on $(0,\infty)\X\R^d$. Since $\div (B_k \varphi) \in L_\loc^2((0,\infty)\X\R^d)$, we have that Gauss-Green theorem holds in the space variable. Therefore, we have
\begin{equation}\label{e3.cor2}
\frac1{R^d}\int_0^T\int_{\I_R} h_k\varphi(t,x)\,dx\,dt=-\frac1{R^d}\int_0^T\int_{\I_R} B_k\cdot \nabla\varphi\,dx\,dt+ O(\frac1R).
\end{equation}
 The right-hand side of the above equation has a well defined limit  when $R\to\infty$, and so does the left-hand side. Thus, the functional
 $$
 \la T_k, \varphi\ra :=\lim_{R\to\infty}\frac1{R^d}\int_0^T\int_{\I_R}h_k\varphi\,dx\,dt
 $$
 is well defined and, by \eqref{e3.cor1}, we get
 $$
 |\la T_k,\varphi\ra|^2\le C \|\varphi\|_{L^2((0,T)\X\GB_d)}^2.
 $$
 Hence, by an usual density argument, $T_k$ is a continuous linear functional over $L^2((0,T)\X\GB_d)$ and so $T_k$ may be represented by a function in $L^2((0,T)\X\GB_d)$, for
 $k=1,\cdots, d$.  Now, also from  \eqref{e3.cor2} we deduce
 $$
 \la T_k,\varphi\ra =- \la B_k, \nabla \varphi\ra_{L^2((0,T)\X\GB_d)}.
 $$
 Therefore, $T_k=h_k$ in the sense of the distributions in $(0,T)\X\GB_d$, and so $h_k\in L^2((0,T)\X\GB_d)$, $k=1,\cdots.d$.for all $T>0$, and so \eqref{e4.36} is verified.

Applying \eqref{e1.9} to a test function $\psi(t,x)= \phi(t,x)\frac1{R^d}\varphi(\frac{x}{R})$, where  $0\le \hat\phi\in C_c^\infty((0,\infty)\X\GB_d)$ and $\varphi\in C_c^\infty(\R^d)$, with $\varphi\ge 0$, $\int_{\R^d}\varphi\,dx=1$, we obtain
\begin{align*}
&\frac1{R^d}\int\limits_{\R_+^{d+1}}\left\{\eta(u)\phi_t+\bfq(u)\cdot\nabla\phi+\sum_{i,j=1}^d r_{ij}(u)\po_{x_ix_j}^2\phi\right.\\
&\left.-\eta''(u)\sum_{k=1}^d\left(\sum_{i=1}^d\po_{x_i}\b_{ik}(u)\right)^2\phi\right\}\varphi(\frac{x}{R})\,dx\,dt\\
&+\frac1{R^{d+1}}\int_{\R_+^{d+1}} \{\bfq(u)\cdot\nabla_y\varphi(\frac{x}{R})\phi+2\sum_{i,j=1}^d r_{ij}(u)\po_{x_i}\phi\po_{y_j}\varphi(\frac{x}{R})\}\,dx\,dt\\
&+\frac1{R^{d+2}}\int_{\R_+^{d+1}}\sum_{i,j=1}^d r_{ij}(u)\phi\po_{y_iy_j}^2\varphi(\frac{x}{R})\,dx\,dt\ge0.
\end{align*}
We make $R\to\infty$ and observe that the two last lines of the above inequality vanish as $R\to\infty$, while the first two give, as $R\to\infty$, using \eqref{e4.36}, 
\begin{multline}\label{e4.41}
\int\limits_{\R_+\X\GB_d}\left\{\eta(u)\phi_t+\bfq(u)\cdot\nabla\phi+\sum_{i,j=1}^d r_{ij}(u)\po_{x_ix_j}^2\phi\right.\\ 
\left.-\eta''(u)\sum_{k=1}^d\left(\sum_{i=1}^d\po_{x_i}\b_{ik}(u)\right)^2\phi\right\}\,d\mm(\om)\,dt\ge0,
\end{multline} 
which is \eqref{e4.39} in the sense of the distributions in $(0,\infty)\X\GB_d$.  

 Now, \eqref{e4.40} follows directly from Lemma~\ref{L:4.2}.   As to the chain rule \eqref{e4.38}, it follows directly from \eqref{e1.8} and the already proved \eqref{e4.39}. This concludes the proof. 

\end{proof}

For the proof of the uniqueness of the entropy solution to \eqref{e4.31},\eqref{e4.32} we use the following lemma, which is a trivial extension to $(0,\infty)\X\GB_d$ of
the corresponding fact in $(0,\infty)\X\R^d$, whose detailed proof is given in \cite{BK}, which in turn extends to the anisotropic case a fundamental trick first proved in \cite{Ca}. Let us denote
$$
\begin{aligned}
& F(u,v)=\sgn(u-v)(\fbf(u)-\fbf(v)),\\
&\Bbf(u,v)=\sgn(u-v)(\Abf(u)-\Abf(v)).
\end{aligned}
$$

\begin{lemma}\label{L:3.4} Let $\xi(t,\om,s,\z)$ be a nonnegative function in $C^\infty(Q\X Q)$, $Q=(0,\infty)\X\GB_d$, such that:
\begin{align*}
&(t, \om)\mapsto \xi(t,\om,s,\z)\in C_c^\infty(Q) \quad \text{for every $(s,\z)\in Q$},\\
&(s,\z)\mapsto\xi(t,\om,s,\z)\in C_c^\infty(Q)\quad \text{for every $(t,\om)\in Q$}.
\end{align*}
Let $u(t,\om)$ and $v(s,\z)$ be two entropy solutions to \eqref{e4.31},\eqref{e4.32}.  Then, we have
\begin{gather}
\iint_{Q\X Q} |u-v|(\xi_t+\xi_s)\,d\mm(\om)\,d\mm(\z)\notag\\ 
+\iint_{Q\X Q}F(u,v)\cdot(\nabla_x\xi+\nabla_y\xi)\,d\mm(\om)\,d\mm(\z)\label{e4.43}\\
+\iint_{Q\X Q} \Bbf(u,v)\cdot(\nabla_{x}^2\xi+\nabla_{xy}^2\xi+\nabla_{yx}^2\xi+\nabla_{y}^2\xi)\,d\mm(\om)\,d\mm(\z)\ge 0.\notag
\end{gather}
\end{lemma}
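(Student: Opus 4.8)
The plan is to obtain \eqref{e4.43} by transferring the corresponding inequality on $(0,\infty)\times\R^d$, which is the Carrillo-type doubling-of-variables estimate established in \cite{BK,Ca}, to the Bohr compact via the mean-value/Gelfand correspondence. More precisely, the statement one has at the level of $\R^d$ is: for $u(t,x),v(s,y)$ entropy solutions of \eqref{e1.1}-\eqref{e1.2} and any nonnegative $\zeta\in C_c^\infty\bigl((0,\infty)\times\R^d\times(0,\infty)\times\R^d\bigr)$ which is separately compactly supported in each pair of variables,
\begin{gather*}
\iintl_{\R_+^{d+1}\times\R_+^{d+1}}\bigl\{|u-v|(\zeta_t+\zeta_s)+F(u,v)\cdot(\nabla_x\zeta+\nabla_y\zeta)\\
+\Bbf(u,v):(\nabla_x^2\zeta+\nabla_{xy}^2\zeta+\nabla_{yx}^2\zeta+\nabla_y^2\zeta)\bigr\}\,dx\,dt\,dy\,ds\ge0.
\end{gather*}
This is precisely the computation spelled out in \cite{BK}, carried over verbatim from the isotropic trick of \cite{Ca}; the only input it uses about the solutions is the chain rule and entropy inequality of Definition~\ref{D:1.1}, both of which, by Theorem~\ref{T:3.1} and the identification of $\BAP$-functions with their Gelfand transforms, are available for entropy solutions on $\GB_d$ as well.

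First I would fix $\xi\in C^\infty(Q\times Q)$ as in the hypothesis and let $\hat u,\hat v\in\BAP^\infty(\R^d)$ (as functions of $(t,x)$, resp.\ $(s,y)$) be representatives of the entropy solutions $u,v$ on $\GB_d$; by the discussion preceding Lemma~\ref{L:3.1}, these satisfy all the conditions of Definition~\ref{D:1.1} on $\R^d$. The test function $\xi$ on $\GB_d\times\GB_d$ is, by Stone's theorem, a uniform limit of trigonometric polynomials in $(\om,\z)$ with coefficients in $C_c^\infty$ of the time variables; pulling back along $\R^d\times\R^d\to\GB_d\times\GB_d$ gives a (non-compactly-supported in $x,y$) almost periodic test function $\tilde\xi(t,x,s,y)$. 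The standard device, exactly as in the proof of Proposition~\ref{P:1.1} and Lemma~\ref{L:2.2}, is to multiply by a spatial cutoff, e.g.\ $R^{-2d}g(x/R)g(y/R)$ with $g$ as in the proof of Proposition~\ref{P:1.1}, apply the $\R^d$-inequality above to $\tilde\xi(t,x,s,y)R^{-2d}g(x/R)g(y/R)$, and let $R\to\infty$. The terms where a derivative falls on a cutoff carry an extra factor $R^{-1}$ or $R^{-2}$ times bounded quantities (using $|u|,|v|\le\|u_0\|_\infty$ and the $L^2_{\loc}$ bounds on $\nabla\cdot\bfA(u)$, $\nabla\cdot\bfR(u)$ from Remark~\ref{R:1.2}), hence vanish; the surviving terms converge, by \eqref{e3.1} applied in both $x$ and $y$, to the mean-value integrals, i.e.\ to the $\mm\otimes\mm$ integrals over $\GB_d\times\GB_d$, giving \eqref{e4.43} for the pulled-back test functions. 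Finally, density of such trigonometric-polynomial test functions in $C_c^\infty(Q\times Q)$ (in the relevant $C^2$-in-space, $C^1$-in-time topology) together with the uniform $L^\infty$ and $L^2_{\loc}$ bounds removes the restriction on $\xi$.

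The main obstacle is not the passage to the limit $R\to\infty$, which is routine, but rather making precise that the $\R^d$-level Carrillo/\cite{BK} inequality is legitimately applicable here: one must check that the entropy solutions on $\GB_d$, viewed as $\BAP^\infty$-functions on $\R^d$, genuinely satisfy the chain rule \eqref{e1.8} and entropy inequality \eqref{e1.9} in the distributional sense on $\R^d$ against \emph{all} $C_c^\infty(\R_+^{d+1})$ test functions (not merely against almost periodic ones), so that the doubling-of-variables manipulation — which involves Kruzhkov mollifiers localized in $x-y$ and is carried out entirely with compactly supported test functions — goes through unchanged. This is exactly the content of the paragraph before Lemma~\ref{L:3.1} together with Theorem~\ref{T:3.1}, so the honest work is to invoke those carefully and then quote \cite{BK,Ca} for the inequality itself; the rest is the cutoff-and-average argument already used twice above.
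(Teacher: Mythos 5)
Your route is genuinely different from the paper's: you want to transfer the doubled Kato inequality of \cite{BK,Ca} from $\R^d\X\R^d$ to $\GB_d\X\GB_d$ by the cutoff-and-mean-value device of Proposition~\ref{P:1.1}, whereas the paper proves the lemma \emph{intrinsically} on $\GB_d$ --- it starts from the entropy inequality \eqref{e4.39} of Definition~\ref{D:4.1}, derives the Kruzhkov inequality \eqref{e4.43'}, doubles the variables with $k=v(s,\z)$ (resp.\ $k=u(t,\om)$) and integrates against $d\mm$, and then runs the Carrillo/\cite{BK} cancellation (chain rule twice plus integration by parts on the dissipation cross-terms) entirely with Haar measure and $\GB_d$ test functions. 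The point you yourself flag as ``the main obstacle'' is where your argument actually breaks: you need the two given entropy solutions of \eqref{e4.31},\eqref{e4.32}, after choosing $\BAP^\infty$ representatives on $\R^d$, to satisfy Definition~\ref{D:1.1} --- the local regularity \eqref{e1.6}, the chain rule \eqref{e1.8}, and the entropy inequality \eqref{e1.9} against \emph{all} of $C_0^\infty(\R_+^{d+1})$. Neither the paragraph before Lemma~\ref{L:3.1} nor Theorem~\ref{T:3.1} gives you this: they establish only the \emph{forward} direction, namely that the Gelfand transform of the $\R^d$ entropy solution is a $\GB_d$ entropy solution. Definition~\ref{D:4.1} is a self-contained definition whose conditions are tested only against almost periodic test functions and whose $L^2$ bounds are in the mean-value sense; an arbitrary solution in that class is not known, at this stage, to be the transform of a Definition~\ref{D:1.1} solution (indeed a representative of an $L^2(\GB_d)$ class need not even lie in $L^2_{\loc}(\R^d)$ in any controlled way).

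This is not a removable technicality but a circularity: the only way to conclude that every Definition~\ref{D:4.1} solution coincides with the canonical Gelfand transform of the $\R^d$ solution is via the uniqueness Theorem~\ref{T:3.2}, which is proved \emph{from} Lemma~\ref{L:3.4}. If you restrict the lemma to solutions already known to arise as Gelfand transforms, your transfer argument goes through (the double mean value in $(x,y)$, the density of trigonometric-polynomial test functions in the $C^1_t C^2_{\om}$ topology via Bochner--F\'ej\'er approximation, and the vanishing of the cutoff error terms are all fine), but then Theorem~\ref{T:3.2} would only compare such special solutions and would no longer assert uniqueness in the class of Definition~\ref{D:4.1}. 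To prove the lemma as stated you must, as the paper does, work directly from the axioms of Definition~\ref{D:4.1} on the Bohr compact.
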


\begin{proof} The proof follows the same lines as the proof of the uniqueness theorem in \cite{BK}.  In  \eqref{e4.39},  we take  $\eta(v)=\eta_\ve(v)$ as a suitable approximation of $|v|$, say, $\eta_\ve(v)=\int_0^v\sgn_\ve(s)\,ds$, with
$$
\sgn_\ve(v)=\begin{cases} -1, &v<-\ve,\\ \sin(\frac{\pi}{2\ve}v),& |v|\le\ve,\\ 1, & v>\ve.
\end{cases}
$$
We  get
 \begin{multline}\label{e4.43'}
\int\limits_{\R_+\X\GB_d}\left\{|u-k|\xi_t+F(u,k)\cdot\nabla_x\xi+\sum_{i,j=1}^d r_{ij}(u,k)\po_{x_ix_j}^2\xi\right\}\, d\mm(\om)\,dt\\ 
-\int\limits_{\R_+\X\GB_d}\left\{\sgn_\ve'(u-k)\sum_{k=1}^d\left(\sum_{i=1}^d\po_{x_i}\b_{ik}(u)\right)^2\xi\right\}\,d\mm(\om)\,dt\ge I_\ve(u,k,\xi_t,\nabla_x\xi, D_x^2\xi),
\end{multline}  
where $\Bbf(u,v)=(r_{ij}(u,v))_{i,j=1}^d$ and $I_\ve(u,k,\xi_t,\nabla_x\xi, D_x^2\xi)$ is the difference between the first integral and the corresponding one with $\eta_\ve(u-k)$, $F_\ve(u,k)$, $r_{\ve ij}(u,k)$, instead of $|u-.k|$, $F(u,k)$ and $r_{ij}(u,k)$,
where $F_\ve(u,k)$ and $r_{\ve ij}(u,k)$ are the entropy flux and the viscosity matrix associated with $\eta_\ve$, respectively. We then use Kruzhkov's doubling of variables method (see \cite{Kr}), making $k=v(s,\z)$ and integrating with respect to $(s,\z)\in Q$ to get
\begin{multline}\label{e4.44}
\int\limits_{Q\X Q}\left\{|u-v|\xi_t+F(u,v)\cdot\nabla_x\xi+\sum_{i,j=1}^d r_{ij}(u,v)\po_{x_ix_j}^2\xi\right\}\,d\mm(\om)\,dt\,d\mm(\z)\,ds \\ 
-\int\limits_{Q\X Q}\left\{\sgn_\ve'(u-v)\sum_{k=1}^d\left(\sum_{i=1}^d\po_{x_i}\b_{ik}(u)\right)^2\xi\right\}\,d\mm(\om)\,dt\,d\mm(\z)\,ds\\
\ge \int_Q I_\ve(u,v,\xi_t,\nabla\xi,D_x^2\xi)\,d\mm(\z)\,ds.
\end{multline}    
We proceed in the same way with the inequality for $v(s,\z)$ analog to \eqref{e4.43'}, this time making $k=u(t,\om)$ and then integrating with respect to $(t,\om)\in Q$ to get
\begin{multline}\label{e4.45}
\int\limits_{Q\X Q}\left\{|u-v|\xi_s+F(u,v)\cdot\nabla_y\xi+\sum_{i,j=1}^d r_{ij}(u,v)\po_{y_iy_j}^2\xi\right\}\,d\mm(\om)\,dt\,d\mm(\z)\,ds\\ 
-\int\limits_{Q\X Q}\left\{\sgn_\ve'(u-v)\sum_{k=1}^d\left(\sum_{i=1}^d\po_{y_i}\b_{ik}(v)\right)^2\xi\right\}\,d\mm(\om)\,dt\,d\mm(\z)\,ds\\
\ge  \int_Q I_\ve(v,u,\xi_t,\nabla\xi,D_x^2\xi)\,d\mm(\om)\,dt.
\end{multline}     
We sum these inequalities to get 
\begin{multline}\label{e4.46}
\int\limits_{Q\X Q}|u-v|(\xi_t+\xi_s)+F(u,v)\cdot(\nabla_x+\nabla_y)\xi 
+\sum_{i,j=1}^d r_{ij}(u,v)(\po_{x_ix_j}^2\xi+\po_{y_iy_j}^2\xi)\\ 
-\sgn_\ve'(u-v)\sum_{k=1}^d\left(\left(\sum_{i=1}^d\po_{x_i}\b_{ik}(u)\right)^2+\left(\sum_{i=1}^d\po_{y_i}\b_{ik}(v)\right)^2\right)\xi \,d\mm(\om)\,dt\,d\mm(\z)\,ds\\
\ge  \int_Q I_\ve(u,v,\xi_t,\nabla\xi,D_x^2\xi)\,d\mm(\z)\,ds+ \int_Q I_\ve(v,u,\xi_t,\nabla\xi,D_x^2\xi)\,d\mm(\om)\,dt,
\end{multline}   
We then add the mixed derivatives $r_{ij}(u,v)(\po_{x_iy_j}^2\xi+\po_{y_ix_j}^2\xi)$ in the third term and use the trivial inequality
\begin{equation*}
\left(\sum_{i=1}^d\po_{x_i}\b_{ik}(u)\right)^2+\left(\sum_{i=1}^d\po_{y_i}\b_{ik}(v)\right)^2
\ge 2\sum_{i,j=1}^d \po_{x_i}\b_{ik}(u)\po_{y_j}\b_{jk}(v),
\end{equation*}
in the fourth term to get
\begin{multline}\label{e4.47}
\int\limits_{Q\X Q}|u-v|(\xi_t+\xi_s)+F(u,v)\cdot(\nabla_x+\nabla_y)\xi \\
+\sum_{i,j=1}^d r_{ij}(u,v)(\po_{x_ix_j}^2\xi+\po_{x_iy_j}^2\xi+\po_{y_ix_j}^2\xi+ \po_{y_iy_j}^2\xi)\\ 
-2\,\sgn_\ve'(u-v)\sum_{k=1}^d\sum_{i,j=1}^d \po_{x_i}\b_{ik}(u)\po_{y_j}\b_{jk}(v)\xi \\
-\sum_{i,j=1}^dr_{ij}(u,v)(\po_{x_iy_j}^2\xi+\po_{y_ix_j}^2\xi)\,d\mm(\om)\,dt\,d\mm(\z)\,ds\\
\ge  \int_Q I_\ve(u,v,\xi_t,\nabla\xi,D_x^2\xi)\,d\mm(\z)\,ds+ \int_Q I_\ve(v,u,\xi_t,\nabla\xi,D_x^2\xi)\,d\mm(\om)\,dt .
\end{multline}   
As in \cite{BK}, we conclude the proof by using twice the chain rule followed by integration by parts in the third line of \eqref{e4.47}. Namely, using the chain rule first in the $y_j$ partial derivatives followed by integration by parts in the same derivatives, this line becomes
$$
\limsup_{\ve\to0}2\sum_{k=1}^d\sum_{i,j=1}^d \po_{x_i}\b_{ik}(u)\b_{jk}^{\sgn_\ve'(u-v)}(v)\po_{y_j}\xi 
$$
where $\frac{d}{dv}\b_{jk}^{\sgn_\ve'(u-v)}(v)=\sgn_\ve'(u-v)\s_{jk}(v)$,  so, we may take
$$
\b_{jk}^{\sgn_\ve'(u-v)}(v)=-\sgn_\ve(u-v)\s_{jk}(v)+\int_u^v\sgn_\ve(u-s)\s_{jk}'(s)\,ds.
$$
We use again the chain rule, now in the $x_i$ partial derivatives, followed by integration by parts in the same derivatives, so that the third line becomes
$$
-2\sum_{k=1}^d\sum_{i,j=1}^d \left(\int_{u}^v\s_{ik}(s)\b_{jk}^{\sgn_\ve'(s-v)}(v)\,ds\right)\po_{x_iy_j}^2\xi. 
$$
Now, we observe that 
$$
\lim_{\ve\to0}\b_{jk}^{\sgn_\ve'(u-v)}(v)=-\sgn(u-v)\s_{jk}(u),
$$
so, taking the limit when $\ve\to0$, the third line becomes
$$
2\sum_{k=1}^d\sum_{i,j=1}^d \left(\int_{v}^u \sgn(s-v)\s_{ik}(s)\s_{jk}(s)\,ds\right)\po_{x_iy_j}^2\xi= 2\sum_{i,j=1}^dr_{ij}(u,v)\po_{x_iy_j}^2\xi,
$$
which cancels with the fourth line  in \eqref{e4.47} since $r_{ij}(u,v)=r_{ji}(u,v)$. Finally, on the other hand, it is straightforward to verify that 
$$
 \int_Q I_\ve(u,v,\xi_t,\nabla\xi,D_x^2\xi)\,d\mm(\z)\,ds+ \int_Q I_\ve(v,u,\xi_t,\nabla\xi,D_x^2\xi)\,d\mm(\om)\,dt\to 0, \quad\text{as $\ve\to0$},
 $$
concluding the proof of  the lemma.

\end{proof}

\begin{theorem}[Uniqueness]\label{T:3.2} There is at most one entropy solution to \eqref{e4.31},\eqref{e4.32}. More specifically, given two entropy solutions \eqref{e4.31},\eqref{e4.32}, $u(t,\om),\ v(t,\om)$, with initial functions $u_0(\om),\ v_0(\om)$, for a.e.\ $t>0$, 
\begin{equation}\label{eT3.2}
\int_{\GB_d}|u(t,\om)-v(t,\om)|\,d\mm(\om)\le \int_{\GB_d}|u_0(\om)-v_0(\om)|\,d\mm(\om).
\end{equation}
\end{theorem}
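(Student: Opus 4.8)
The plan is to derive \eqref{eT3.2} from the doubled inequality \eqref{e4.43} of Lemma~\ref{L:3.4} by the Kruzhkov closing argument, carried out exactly as in the proof of Proposition~\ref{P:1.1} but now in the doubled variables $(t,\om),(s,\z)$, with $\rho_\a(\om-\z)$ playing the role that $R^{-d}g(x/R)$ played there and with an additional time mollifier collapsing $s$ onto $t$. Let $\chi_\nu$ be the smooth approximation of the indicator of $(t_0,t_1]$ of Proposition~\ref{P:1.1}, so that $\chi_\nu'=\d_\nu(\cdot-t_0)-\d_\nu(\cdot-t_1)$; let $\d_m\ge0$ be a smooth even approximation of the Dirac mass at $0$ on $\R$, with $\supp\d_m\subset(-1/m,1/m)$ and $m$ so large that $(t,s)\mapsto\chi_\nu(t)\d_m(t-s)$ is supported in $(0,\infty)^2$; and let $\rho_\a$ be the approximate identity on $\GB_d$ of Section~\ref{S:3}, which we take smooth in the sense used there (mollifying once more if necessary). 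I would insert into \eqref{e4.43} the nonnegative test function
\[
\xi(t,\om,s,\z)=\chi_\nu(t)\,\d_m(t-s)\,\rho_\a(\om-\z),
\]
which meets the hypotheses of Lemma~\ref{L:3.4}.

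The point of this choice is the cancellation it forces. Since $\xi$ depends on $(t,s)$ only through $t-s$ apart from the factor $\chi_\nu(t)$, one gets $\xi_t+\xi_s=\chi_\nu'(t)\,\d_m(t-s)\,\rho_\a(\om-\z)$, the $\d_m'$ contributions cancelling. Since $\rho_\a(\om-\z)$ is a function of $\om-\z$ and $\mm$ is translation invariant, the chain rule gives $\nabla_x[\rho_\a(\om-\z)]=-\nabla_y[\rho_\a(\om-\z)]$, so $(\nabla_x+\nabla_y)\xi\equiv0$, and $\po_{x_ix_j}^2[\rho_\a(\om-\z)]=\po_{y_iy_j}^2[\rho_\a(\om-\z)]=-\po_{x_iy_j}^2[\rho_\a(\om-\z)]=-\po_{y_ix_j}^2[\rho_\a(\om-\z)]=(\po_i\po_j\rho_\a)(\om-\z)$, so $\nabla_x^2\xi+\nabla_{xy}^2\xi+\nabla_{yx}^2\xi+\nabla_y^2\xi\equiv0$. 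Hence the flux term $F(u,v)\cdot(\nabla_x+\nabla_y)\xi$ and the entire parabolic term $\Bbf(u,v)\cdot(\nabla_x^2+\nabla_{xy}^2+\nabla_{yx}^2+\nabla_y^2)\xi$ drop out of \eqref{e4.43}, which reduces to
\[
\iint_{Q\X Q}|u(t,\om)-v(s,\z)|\,\chi_\nu'(t)\,\d_m(t-s)\,\rho_\a(\om-\z)\,d\mm(\om)\,dt\,d\mm(\z)\,ds\ge0 .
\]
It is exactly the translation structure of $\GB_d$, exploited through the choice $\rho_\a(\om-\z)$, that makes this collapse occur.

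Next I would pass to the limits $m\to\infty$, $\nu\to\infty$, $\a\to\infty$, $t_0\to0+$, in that order, by the same bookkeeping with Lebesgue points in $t$ as in Proposition~\ref{P:1.1} (and Kruzhkov's method; cf.\ \cite{BK}): letting $m\to\infty$ collapses $s$ onto $t$, and letting $\nu\to\infty$ gives, for a.e.\ $0<t_0<t_1$,
\[
\iint_{\GB_d\X\GB_d}|u(t_1,\om)-v(t_1,\z)|\,\rho_\a(\om-\z)\,d\mm(\om)\,d\mm(\z)\le\iint_{\GB_d\X\GB_d}|u(t_0,\om)-v(t_0,\z)|\,\rho_\a(\om-\z)\,d\mm(\om)\,d\mm(\z).
\]
To let $\a\to\infty$ I would split $|u(t,\om)-v(t,\z)|\le|u(t,\om)-v(t,\om)|+|v(t,\om)-v(t,\z)|$: integrated against $\rho_\a(\om-\z)$ the first term gives $\int_{\GB_d}|u(t,\om)-v(t,\om)|\,d\mm(\om)$ (using $\int_{\GB_d}\rho_\a\,d\mm=1$ and invariance of $\mm$), while the error from the second is bounded by $\iint|v(t,\om)-v(t,\z)|\rho_\a(\om-\z)\,d\mm(\om)\,d\mm(\z)$, which tends to $0$ by Lemma~\ref{L:3.3} applied to $v(t,\cdot)\in\BAP(\R^d)$. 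This yields, for a.e.\ $0<t_0<t_1$,
\[
\int_{\GB_d}|u(t_1,\cdot)-v(t_1,\cdot)|\,d\mm\le\int_{\GB_d}|u(t_0,\cdot)-v(t_0,\cdot)|\,d\mm,
\]
and letting $t_0\to0+$ along good points, using the initial conditions \eqref{e4.40} for $u$ and $v$, gives \eqref{eT3.2}; taking $u_0=v_0$ gives the uniqueness statement.

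The algebraic heart of the argument, namely the vanishing of the flux and parabolic terms for the test function $\xi=\chi_\nu(t)\d_m(t-s)\rho_\a(\om-\z)$, is immediate. The main obstacle I expect is the technical bookkeeping: checking that $\rho_\a$ (times $\chi_\nu\d_m$) is an admissible test function for Lemma~\ref{L:3.4}, i.e.\ smooth in the $\GB_d$-sense and compactly supported in $t,s$; and carrying out the successive limits rigorously, especially the collapse $s\to t$ in the doubled time variable, which, as in the proof of Lemma~\ref{L:4.2}, rests on a uniform-in-$t$ $L^1(\GB_d)$-modulus of continuity of entropy solutions in $x$ (hence, via Kruzhkov's lemma, in $t$); this preliminary regularity should be established first by the methods of Sections~\ref{S:2}--\ref{S:3}.
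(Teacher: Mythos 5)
Your proposal is correct and follows essentially the same route as the paper: the paper inserts $\xi=\d_\nu(t-s)\rho_\a(\om-\z)\phi(t,\om)$ into \eqref{e4.43}, exploits the identity $(\po_{x_i}+\po_{y_i})(\po_{x_j}+\po_{y_j})\rho_\a(\om-\z)=0$ to kill the parabolic cross terms, passes to the intermediate inequality \eqref{e4.49} for general $\phi$, and only then specializes $\phi=\chi_\nu(t)$, whereas you hardwire $\phi=\chi_\nu(t)$ from the outset so the flux and parabolic terms drop immediately. The remaining differences (order of the limits in $m,\nu,\a$, and invoking Lemma~\ref{L:3.3} versus the property $\rho_\a*g\to g$ for $g\in\A_{u,v}$ to pass $\a\to\infty$) are cosmetic.
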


\begin{proof}   In \eqref{e4.43} we take 
\begin{equation}\label{e4.48}
\xi=\d_\nu(t-s) \tilde \rho_\a(\om-\z)\phi(t,\om)
\end{equation}
where $\d_\nu$ is as in the proof of Proposition~\ref{P:1.1}, $\phi\in C_c^\infty((0,\infty)\X\GB_d)$, and  $\tilde \rho_\a$ is obtained by  suitably mollifying an  approximate identity $\rho_\a$ in $\GB_d$ with $\a$ running through a countable basis of neighborhoods of 0 in the topology generated by $\A_{u,v}$ and for simplicity we take $\a\in\N$. More specifically, we may define $\tilde \rho_\a$ as a smooth almost periodic function on $\R^d$ by setting $\tilde \rho_\a=c_\a \rho_\a*\psi_\a$, where $\psi_\a$ is a standard mollifying kernel in $\R^d$ such that $\|\rho_\a-\rho_\a*\psi_\a\|_\infty<\ve_\a$, with $\ve_\a\to0$, as $\a\to\infty$, and $c_\a$ is a normalizing factor such that $\int_{\GB_d}\tilde \rho_\a(\om)\,d\mm(\om)=1$.  
In particular,  $\tilde\rho_\a*g\to g$ as $\a\to\infty$, in the $\sup$-norm, for all $g\in\A_{u,v}$. We observe that for any $g\in C^2(\GB_d)$, trivially we have
\begin{multline*}
\po_{x_i x_j}^2g(\om-\z)+\po_{x_iy_j}^2g(\om-\z)+\po_{y_ix_j}^2g(\om-\z)+\po_{y_iy_j}^2g(\om-\z)\\
=(\po_{x_i}+\po_{y_i})(\po_{x_j}+\po_{y_j})g(\om-\z)=0.
\end{multline*} 
Therefore, when we take $\xi$, defined  as in \eqref{e4.48}, in \eqref{e4.43} and make $\nu\to\infty$ and $\a\to\infty$, we end up with
\begin{equation}
\iint_{Q} |u-v|\phi_t+ F(u,v)\cdot\nabla_x\phi 
+ \Bbf(u,v)\cdot \nabla_{x}^2\phi\,d\mm(\om)\ge 0.\label{e4.49}
\end{equation} 
We then take $\phi(t,\om)=\chi_\nu(t)$ in \eqref{e4.49}, with $\chi_\nu$ as in the proof of Proposition~\ref{P:1.1}, make $\nu\to\infty$ and then make $t_0\to0$, using \eqref{e4.40}, to
finally get \eqref{eT3.2}.

\end{proof}

We remark that, from \eqref{e4.49}, it also follows, by using the same kind of test function, $\phi(t,\om)=\chi_\nu(t)$, with $\chi_\nu'(t)=\d_\nu(t-t_1)-\d_\nu(t-t_2)$, as in the proof of Proposition~\ref{P:1.1},  that for all $0\le t_1\le t_2$,
\begin{equation}\label{eT3.2'}
\int_{\GB_d}|u(t_2,\om)-v(t_2,\om)|\,d\mm(\om)\le \int_{\GB_d}|u(t_1,\om)-v(t_1,\om)|\,d\mm(\om),
\end{equation}
where we use the fact that $u\in C([0,\infty),L^1(\GB_d))$, which is justified by Theorem~\ref{T:3.2} and Lemma~\ref{L:4.2}.

\begin{lemma}\label{L:3.5} Let $u(t,\om)$ be the entropy solution of \eqref{e4.31},\eqref{e4.32}. Then, for $0\le t_1\le t_2$, 
\begin{equation}\label{e4.50}
\int_{\GB_d}|u(t_2,\om)|^2\,d\mm(\om)\le \int_{\GB_d}|u(t_1,\om)|^2\,d\mm(\om).
\end{equation}
\end{lemma}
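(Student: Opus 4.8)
The plan is to carry out, on the group $\GB_d$, the same energy computation that produced \eqref{e4.42} in the proof of Theorem~\ref{T:3.1}, but now localized between the times $t_1$ and $t_2$ and reading off the boundary terms rather than the dissipation bound. Concretely, I would apply the entropy inequality \eqref{e4.39} with the convex $C^2$ entropy $\eta(u)=\tfrac12u^2$, for which $\bfq'(u)=u\,\bff'(u)$ and $r_{ij}'(u)=u\,a_{ij}(u)$ are smooth, testing it against $\phi(t,\om)=\chi_\nu(t)$, where $\chi_\nu(t)=\theta_\nu(t-t_1)-\theta_\nu(t-t_2)$ is the smooth approximation of the indicator of $(t_1,t_2]$ introduced in the proof of Proposition~\ref{P:1.1}; thus $\chi_\nu\in C_0^\infty(\R_+)$, $0\le\chi_\nu\le1$, $\chi_\nu'(t)=\d_\nu(t-t_1)-\d_\nu(t-t_2)$, and, being constant in $\om$ with $\GB_d$ compact, $\chi_\nu$ is an admissible test function in $C_0^\infty((0,\infty)\X\GB_d)$ as soon as $0<t_1<t_2$.

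Since $\phi$ does not depend on $\om$, in the distributional pairing the terms $\nabla_x\cdot\bfq(u)$ and $\sum_{i,j}\po^2_{x_ix_j}r_{ij}(u)$ contribute only integrals of $\bfq(u)$ and $r_{ij}(u)$ against $\nabla_x\phi$ and $\po^2_{x_ix_j}\phi$ — all zero — because integration by parts on the compact group $\GB_d$ produces no boundary term (cf.\ the identity following \eqref{e4.30'}). As $\eta''\equiv1$, what remains is
\[
\int_0^\infty\!\int_{\GB_d}\tfrac12u^2\,\chi_\nu'(t)\,d\mm(\om)\,dt\;\ge\;\int_0^\infty\!\int_{\GB_d}\sum_{k=1}^d\Bigl(\sum_{i=1}^d\po_{x_i}\b_{ik}(u)\Bigr)^2\chi_\nu(t)\,d\mm(\om)\,dt\;\ge\;0 .
\]
I would then let $\nu\to\infty$. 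By Lemma~\ref{L:4.2} and Theorem~\ref{T:3.2} we have $u\in C([0,\infty);L^1(\GB_d))$, and since $|u(t,\om)|\le\|u_0\|_\infty$ for a.e.\ $(t,\om)$ this upgrades, via $\|u(t,\cdot)-u(s,\cdot)\|_{L^2(\GB_d)}^2\le 2\|u_0\|_\infty\|u(t,\cdot)-u(s,\cdot)\|_{L^1(\GB_d)}$, to $u\in C([0,\infty);L^2(\GB_d))$; in particular $t\mapsto\int_{\GB_d}u(t,\om)^2\,d\mm(\om)$ is continuous. Since $\d_\nu(\cdot-t_i)$ converges to the Dirac mass at $t_i$, the displayed inequality passes to the limit as
\[
\tfrac12\int_{\GB_d}u(t_1,\om)^2\,d\mm(\om)\;-\;\tfrac12\int_{\GB_d}u(t_2,\om)^2\,d\mm(\om)\;\ge\;0 ,
\]
which is \eqref{e4.50}. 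The remaining case $t_1=0$ follows either by letting $t_1\to0+$ and using the just-proved continuity at $t=0$, or, exactly as in the derivation of \eqref{e4.42}, by replacing $\d_\nu(t-t_1)$ with a one-sided mollifier at the origin and invoking the initial condition \eqref{e4.40}.

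There is no substantial obstacle here: the argument merely repeats bookkeeping already done for Theorem~\ref{T:3.1}, the one point worth checking being that $t\mapsto\|u(t,\cdot)\|_{L^2(\GB_d)}^2$ is genuinely continuous — so that the mollified identity really selects the two endpoint values — which is immediate from the $L^1$-in-time continuity together with the uniform $L^\infty$ bound.
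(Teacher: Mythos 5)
Your argument is correct and is essentially the paper's own proof: apply the entropy inequality \eqref{e4.39} with the quadratic entropy to the purely temporal test function $\chi_\nu(t)$, drop the nonnegative dissipation, and pass to the limit $\nu\to\infty$ using the continuity $u\in C([0,\infty);L^1(\GB_d))$ (the paper phrases the limit via Lebesgue points of $t\mapsto\int_{\GB_d}u^2\,d\mm$ and then invokes continuity, while you upgrade directly to $L^2$-continuity via the $L^\infty$ bound, but this is the same mechanism). No gaps.
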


\begin{proof} By uniqueness, $u(t,\om)$ is the Gelfand transform of the entropy solution of \eqref{e1.1},\eqref{e1.2}, whose initial data is the inverse Gelfand transform of $u_0(\om)$. In particular, $u\in C([0,\infty); L^1(\GB_d))$. Now, we apply \eqref{e4.39}, with $\eta(u)= u^2$,  in a test function $\phi=\chi_\nu(t)$ with $\chi_\nu$ as in the proof of Proposition~\ref{P:1.1}, to obtain
$$
\int_0^\infty\int_{\GB_d} u^2(t,\om) (\d_\nu(t-t_1)-\d_\nu(t-t_2))\,d\mm(\om)\,dt\ge 0,
$$
where $\d_\nu$ is as in the proof of Proposition~\ref{P:1.1}. Then, if $t_1$ and $t_2$ are Lebesgue points of $\int_{\GB_d}u^2(t,\om)\,d\mm(\om)$, making $\nu\to\infty$, we get \eqref{e4.50}. Now, since $u\in C([0,\infty); L^1(\GB_d))$, we conclude that \eqref{e4.50} holds for all $0\le t_1\le t_2$. 
\end{proof}

\section{Proof of the decay property}\label{S:4}

In this section we prove the decay property \eqref{e1.13}. We first remark that from Theorem~\ref{T:3.2} and the proof of Theorem~\ref{T:3.1} it follows that the Gelfand transform of the entropy solution of \eqref{e1.1}-\eqref{e1.2} is the unique solution of \eqref{e4.31}-\eqref{e4.32}. It is also clear that the decay to the mean value, in the sense of the $L^1(\GB_d)$-norm, of  the solution of \eqref{e4.31}-\eqref{e4.32} is equivalent to the decay to the mean value of its inverse Gelfand transform, that is, the entropy solution of \eqref{e1.1}-\eqref{e1.2}, in the sense of \eqref{e1.13}.  So, it suffices to prove the decay to the mean value of the solution of \eqref{e4.31}-\eqref{e4.32} in the sense of the $L^1(\GB_d)$-norm.   Our proof is strongly motivated by the proof of the corresponding decay property for periodic entropy solutions in \cite{CP2}. 
As in  \cite{CP2}, we assume, without loss of generality,  that $\Me(u_0)=\int_{\GB_d}u_0(\om)\,d\mm(\om)=0$ and so $\int_{\GB_d}u(t,\om)\,d\mm(\om)=0$, for all $t\ge0$; otherwise we may replace $u(t,\om)$ by $u(t,\om)-\bar u$, $\bff(u)$ by $\bff(u+\bar u)$, and $\bfA(u)$ by $\bfA(u+\bar u)$ in \eqref{e1.1} and \eqref{e4.31}, and $u_0$ by $u_0-\bar u$ in \eqref{e1.2} and \eqref{e4.32}. 

First we observe that \eqref{e4.39} can be rewritten in the form
\begin{multline}\label{e5.1}
\po_t\eta(u)+\nabla_x\cdot \bfq(u) -\nabla_x^2: \bfA^{\eta'}(u)=-(m^{\eta''}+n^{\eta''}),\\
\text{for any convex $C^2$ function $\eta:\R\to\R$ and ${\bfq}'(u)=\eta'(u)\bff'(u)$},
\end{multline}
in the sense of the distributions on $(0,\infty)\X\GB_d$, where $\bfA^{\eta'}(u)=\int^u \eta'(\xi)A(\xi)\,d\xi$, 
\begin{multline}\label{e5.2}
m^{\eta''}(t,\om)=\int_{\R}\eta''(\xi) \,dm(t,\om,\xi),\\ \text{with $m(t,\om,\xi)$ a nonnegative measure on $(0,\infty)\X\GB_d\X\R$},
\end{multline}
and
\begin{equation}\label{e5.3}
n^{\eta''}(t,\om)=\int_{\R} \eta''(\xi) \,d n(t,\om,\xi)
\end{equation}
for $n(t,\om,\xi)$ the measure on $(0,\infty)\X\GB_d\X\R$ defined as
\begin{equation}\label{e5.4}
n(t,\om,\xi):=\d(\xi-u(t,\om))\sum_{k=1}^d\left(\sum_{i=1}^d\po_{x_i}\b_{ik}(u(t,\om))\right)^2.
\end{equation}

Let us recall the kinetic $\chi$ on $\R^2$ defined by:
\begin{equation}\label{e5.5}
\chi(\xi;u)=\begin{cases} 1 &\text{for $0<\xi<u$}\\ -1 &\text{for $u<\xi<0$}\\ 0 &\text{otherwise}
\end{cases}.
\end{equation}

The relation $S(u)=\int_{\R}S'(\xi)\chi(\xi;u)\,d\xi$, valid for all Lipschitz $S(u)$ with $S(0)=0$, yields the following kinetic equation for $\chi(\xi;u(t,\om))$, where  $u(t,\om)$  is the entropy solution of \eqref{e4.31},\eqref{e4.32}, which is equivalent to the entropy identity \eqref{e5.1}:
\begin{equation}\label{e5.6}
\po_t\chi(\xi;u)+a(\xi)\cdot\nabla_x\chi(\xi;u)-\nabla_x\cdot(A(\xi)\nabla_x\chi(\xi;u))=\po_\xi(m+n)(t,\om,\xi)
\end{equation}
in the sense of the distributions on $(0,\infty)\X\GB_d\X\R$, with initial condition
\begin{equation}\label{e5.7}
\chi(\xi;u(t,\om))|_{t=0}=\chi(\xi;u_0(\om)),
\end{equation}
and $n(t,\om,\xi)$ is defined by \eqref{e5.4}. 

We observe that  by Lemma~\ref{L:3.5} the function
$$
I(t):=\int_{\GB_d}|u(t,\om)|^2\,d\mm(\om)
$$
is a non-increasing, bounded function and so the following limit exists:
\begin{equation}\label{e5.8}
\lim_{t\to\infty} I(t)=I(\infty)=:I_\infty\in[0,\infty).
\end{equation}
We define the translation sequence
$$
v^k(t,\om):=u(t+k,\om), \quad k\in\N,
$$
defined in $[-k,\infty)\X\GB_d$.  Trivially,
\begin{equation}\label{e5.9}
\|v^k(t,\cdot)\|_\infty=\|u(t+k,\cdot)\|_\infty\le \|u_0\|_\infty,
\end{equation}
and $\chi(\xi;v^k(t,\om))$ satisfies
\begin{equation}\label{e5.10}
\po_t\chi(\xi;v^k)+a(\xi)\cdot\nabla_x\chi(\xi;v^k)-\nabla_x\cdot(A(\xi)\nabla_x\chi(\xi;v^k))=\po_\xi(m^k+n^k)(t,\om,\xi)
\end{equation}
as distributions in $(-k,\infty)\X\GB_d\X\R$, with $(m^k+n^k)(t,\om,\xi)=(m+n)(t+k,\om,\xi)$. 
     
We have the following compactness property for the sequence $v^k(t,\om)$.

\begin{lemma}\label{L:5.1} There exists a subsequence $\{v^{k_j}\}_{j=1}^\infty\subset \{v^k\}_{k=1}^\infty$ and $v(t,\om)\in L^\infty(\R\X\GB_d)$, with $\int_{\GB_d}v(t,\om)\,d\mm(\om)=0$, for all  $t\in\R$, such that 
\begin{equation}\label{e5.11}
v^{k_j}(t,\om)\to v(t,\om)\quad \text{a.e.\ $(t,\om)\in\R\X\GB_d$ as $j\to\infty$}
\end{equation}
and, correspondingly,
\begin{equation} \label{e5.12}
\chi(\xi;v^k(t,\om))\to\chi(\xi;v(t,\om))\quad \text{a.e.\ $(t,\om,\xi)\in \R\X\GB_d\X\R$ as $j\to\infty$}.
\end{equation}
Moreover, $\chi(\xi;v)$ is a solution in $\DD'(\R\X\GB_d\X\R)$, the space of distributions on $\R\X\GB_d$,  of
\begin{equation}\label{e5.13}
\po_t\chi+a(\xi)\cdot\nabla_x\chi-\nabla_x\cdot(A(\xi)\nabla_x\chi)=0.
\end{equation}
In particular,
\begin{equation}\label{e5.14}
\int_{\GB_d}|v(t,x)|^2\,dx= I_\infty\in [0,\infty),\quad \text{for a.e.\ $t\in\R$},
\end{equation}

\end{lemma}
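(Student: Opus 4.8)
\emph{Strategy and time equicontinuity.} The plan is to produce the limit $v$ by a compactness argument and then pass to the limit in the kinetic equation \eqref{e5.10}. The spatial compactness is already provided by Lemma~\ref{L:3.1}: for every fixed $t$ and all $k$ with $t+k>0$ one has $v^k(t,\cdot)=u(t+k,\cdot)\in\{u(s,\cdot):s>0\}$, which is relatively compact in $\BAP^2(\R^d)\cong L^2(\GB_d)$. To upgrade this to compactness in $(t,\om)$ I first establish a uniform‑in‑$k$ modulus of continuity in the time variable. Since $u(\cdot+h,\cdot)$ is again an entropy solution of \eqref{e4.31},\eqref{e4.32}, with initial datum $u(h,\cdot)$, inequality \eqref{eT3.2'} shows that $s\mapsto\|u(s+h,\cdot)-u(s,\cdot)\|_{L^1(\GB_d)}$ is non-increasing on $[0,\infty)$; letting the base point tend to $0^+$ and using the initial condition \eqref{e4.40} (valid by Lemma~\ref{L:4.2}) gives
\[
\|u(s+h,\cdot)-u(s,\cdot)\|_{L^1(\GB_d)}\le\|u(h,\cdot)-u_0\|_{L^1(\GB_d)}=:\omega(h)\to0\quad(h\to0^+),
\]
uniformly in $s\ge0$. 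Hence $\|v^k(t+h,\cdot)-v^k(t,\cdot)\|_{L^1(\GB_d)}\le\omega(h)$ for all large $k$ and all $t$, and together with the uniform bound \eqref{e5.9} this yields $\|v^k(t+h,\cdot)-v^k(t,\cdot)\|_{L^2(\GB_d)}^2\le2\|u_0\|_\infty\,\omega(h)$, so the maps $t\mapsto v^k(t,\cdot)$ are equicontinuous into $L^2(\GB_d)$, uniformly in $k$.

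\emph{Extraction of the limit.} By Arzel\`a--Ascoli, for each $N\in\N$ the family $\{v^k\}$ is relatively compact in $C([-N,N];L^2(\GB_d))$, and a diagonal argument gives a subsequence $v^{k_j}\to v$ in $C_\loc(\R;L^2(\GB_d))$, with $v\in L^\infty(\R\X\GB_d)$, $\|v\|_\infty\le\|u_0\|_\infty$, and $\int_{\GB_d}v(t,\om)\,d\mm(\om)=0$ for every $t$, since $\int_{\GB_d}v^{k_j}(t,\om)\,d\mm(\om)=\int_{\GB_d}u(t+k_j,\om)\,d\mm(\om)=0$ and the mean is continuous on $L^2(\GB_d)$. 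Then $v^{k_j}\to v$ in $L^2_\loc(\R\X\GB_d)$, so after one further extraction $v^{k_j}\to v$ a.e.\ on $\R\X\GB_d$, which is \eqref{e5.11}. Since for every $\xi$ the map $u\mapsto\chi(\xi;u)$ is continuous at each $u\ne\xi$, at a.e.\ $(t,\om)$ we get $\chi(\xi;v^{k_j}(t,\om))\to\chi(\xi;v(t,\om))$ for every $\xi\ne v(t,\om)$, hence for a.e.\ $\xi$, and \eqref{e5.12} follows by Fubini.

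\emph{Passing to the limit in the kinetic equation.} From \eqref{e5.1} with $\eta(u)=\tfrac12u^2$, tested against a function of $t$ alone (the $x$-divergence and double-divergence terms integrate to zero on the group $\GB_d$, as in the proof of Theorem~\ref{T:3.1}), one obtains the dissipation bound
\[
(m+n)\bigl((t_0,T)\X\GB_d\X\R\bigr)\le\tfrac12\bigl(I(t_0)-I(T)\bigr),\qquad\text{for a.e.\ }0<t_0<T.
\]
Consequently the mass of $m+n$ on $(k-N,\infty)\X\GB_d\X\R$ is at most $\tfrac12(I(k-N)-I_\infty)$, which tends to $0$ as $k\to\infty$ because $I$ is non-increasing with limit $I_\infty$ (see \eqref{e5.8}). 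Since $(m^k+n^k)(t,\om,\xi)=(m+n)(t+k,\om,\xi)$, on any box $(-N,N)\X\GB_d\X\R$ the total mass of $m^k+n^k$ equals that of $m+n$ on $(k-N,k+N)\X\GB_d\X\R$; hence $m^k+n^k\to0$ as measures on compact subsets of $\R\X\GB_d\X\R$, and a fortiori $\po_\xi(m^{k_j}+n^{k_j})\to0$ in $\DD'$. On the other hand, by \eqref{e5.12} and dominated convergence (all integrands bounded by $1$), the left-hand side of \eqref{e5.10} converges in $\DD'(\R\X\GB_d\X\R)$ to the left-hand side of \eqref{e5.13}; passing to the limit along $k_j$ in \eqref{e5.10} therefore yields \eqref{e5.13}. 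Finally, the $C_\loc(\R;L^2(\GB_d))$-convergence together with $\|v^{k_j}(t,\cdot)\|_{L^2(\GB_d)}^2=I(t+k_j)\to I_\infty$ gives $\int_{\GB_d}|v(t,\om)|^2\,d\mm(\om)=I_\infty$ for every $t$, which is \eqref{e5.14}.

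\emph{Main obstacle.} The crux of the argument is the vanishing of the dissipation measures $m^k+n^k$ in the last paragraph: this is the quantitative heart of the statement and is precisely where the monotonicity and the existence of a limit for the energy $I(t)$ are exploited. The other non-formal ingredient is the uniform‑in‑$k$ time-equicontinuity of the first paragraph, derived from the $L^1(\GB_d)$-contraction \eqref{eT3.2'}; it is what makes Arzel\`a--Ascoli applicable even though $t$ ranges over the whole (non-compact) real line.
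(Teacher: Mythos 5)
Your proof is correct and follows essentially the same route as the paper: compactness of the translates $v^k$ via the $L^1(\GB_d)$-contraction \eqref{eT3.2'} and the spatial translation estimate, followed by the vanishing of the dissipation measures $m^k+n^k$ on compact sets from the monotonicity and convergence of $I(t)$. The only (harmless) difference is in the packaging of the compactness step: you derive a uniform-in-$k$ time modulus of continuity and apply Arzel\`a--Ascoli to get convergence in $C_{\loc}(\R;L^2(\GB_d))$, whereas the paper extracts convergence at a single time slice $t=-N_0$ and propagates it forward by the contraction to obtain $L^1_{\loc}$ convergence; your variant even yields \eqref{e5.14} for every $t$ rather than a.e.\ $t$.
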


\begin{proof}  Let $N_0\in\N$   be given. For $k\ge N_0$, we have from \eqref{eT3.2'}, for $t\ge -N_0$, 
\begin{equation}\label{e5.15}
\int_{\GB_d}|v^k(t,\om+\z)-v^k(t,\om)|\,d\mm(\om)\le \int_{\GB_d}|u_0(\om+\z)-u_0(\om)|\,d\mm(\om),
\end{equation}
for any $\z\in\GB_d$, where we also use the fact that if $u(t,\om)$ is an entropy solution of \eqref{e4.31},\eqref{e4.32}, then $u(t,\om+\z)$ is also an entropy solution of \eqref{e4.31} with initial data $u_0(\om+\z)$, which may be trivially checked. For each fixed $t\ge-N_0$, the inequality \eqref{e5.15} implies the compactness in $L^1(\GB_d)$ of
the family $\{v^k(t,\cdot)\}_{k\in\N}$, which can be easily proven by means of an approximate identity $\rho_\a$ as  in the proof of Lemma~\ref{L:3.1} (see \eqref{e4.26}). In particular, for $t=-N_0$, we can extract a subsequence $\{v^{k(N_0,j)}\}$ such that $\{v^{k(N_0,j)}(-N_0,\cdot)\}$ converges in $L^1(\GB_d)$. But, for any $N\in\N$, we have 
\begin{multline*}
\int_{-N_0}^N \int_{\GB_d}|v^{k(N_0,l)}(t,\om) -v^{k(N_0,m)}|\,d\mm(\om)\,dt \\ \le (N+N_0)\int_{\GB_d}|v^{k(N_0,l)}(-N_0, \om)-v^{k(N_0,m)}(-N_0,\om)|\,d\mm(\om),
\end{multline*}
 which implies that $\{v^{k(N_0,j)}\}_{j\in\N}$ converges in $L^1([-N_0,N]\X\GB_d)$, for any $N\in\N$. We may now, in an iterative way, making $N_0=1,2,\cdots$, extract  successive subsequences $\{v^{k(N_0,j)}\}$, each converging in $L_{\loc}^1([-N_0,\infty)\X\GB_d)$, respectively, such that $\{v^{k(m+1,j)}\}\subset \{v^{k(m,j)}\}$, for all $m\in\N$. 
We then use a diagonal process, to define a  subsequence $\{v^{k_j}\}_{j\in\N}$ converging in $L_{\loc}^1(\R\X\GB_d)$ to a certain $v\in L^\infty(\R\X\GB_d)$. Extracting a further subsequence, if necessary, we obtain that $v^{k_j}$ converges a.e.\ to $v$ in $\R\X\GB_d$, which is \eqref{e5.11}.  Assertion \eqref{e5.12} follows from \eqref{e5.11} by a well known  property of $\chi$-functions. 

Now, multiplying \eqref{e5.10} by $\xi$ and integrating on $(t,x,\xi)\in (-T,T)\X\GB_d\X\R$, for any $0<T\le k$, we obtain
\begin{equation}\label{e5.16}
\int_{\R}\int_{-T}^T\int_{\GB_d}(m^k+n^k)(t,\om,\xi)\,d\mm(\om)\,dt\,d\xi \le \frac12(I(k-T)-I(k+T))\le \frac12\|u_0\|_\infty^2.
\end{equation}
So, the nonnegative measures sequences $\{m^k(t,\om,\xi)\}$, $\{n^k(t,\om,\xi)\}$ are uniformly bounded in $k$ over $(-T,T)\X\GB_d\X\R$. Therefore, there exist a subsequence, which we still label $k_j$, and measures $M_1(t,\om,\xi)$ and $M_2(t,\om,\xi)$ such that 
\begin{multline*}
m^{k_j}(t,\om,\xi)\wto M_1(t,\om,\xi)\ge0,\; n^{k_j}(t,\om,\xi)\wto M_2(t,\om,\xi)\ge0, \\ \text{weakly in $\M((-T,T)\X\GB_d\X \R)$, as $j\to\infty$, for all $T>0$}.
\end{multline*}
Now, since $I(t)$ converges, we have that
$$
I(k-T)-I(T+k)\to 0,\quad\text{as $k\to\infty$}.
$$
Hence, \eqref{e5.16} implies that 
\begin{equation}\label{e5.17}
M_1(\R\X\GB_d\X\R)=0,\quad M_2(\R\X\GB_d\X\R)=0.
\end{equation}
Moreover, taking $k=k_j$ and making $j\to\infty$ in \eqref{e5.10}, we conclude that $\chi(\xi; v(t,\om))$ is an function in $L^\infty(\R\X\GB_d\X\R)$ satisfying \eqref{e5.13} in 
$\DD'(\R\X\GB_d\X \R)$. 

Finally, multiplying \eqref{e5.13} by $\xi$ and then integrating $d\mm(\om)\,d\xi$ over $\GB_d\X\R$, we get \eqref{e5.14}.
  
\end{proof}
 
About the limit function $v(t,\om)$ we also have the following:

\begin{lemma} \label{L:5.2}  The limit function $v(t,\om)$ also satisfies the properties:
\begin{enumerate}
\item[(i)] $v\in C(\R; L^1(\GB_d))$. 

\item[(ii)] In particular, $\Lambda_v:=\{\l\in \R\,:\, \exists t\in\R,\; \l\in\Sp(v(t,\cdot))\}$ is a countable set, and so is $\Gr(v)$, the smallest additive group generated by $\Lambda_v$.  Actually, $\Lambda_v=\Lambda_u$. Further, for all $(t,\xi)\in\R^2$ we have $\Sp(\chi(\xi;v(t,\cdot)))\subset \Gr(v)$. 

\item[(iii)] The family $\{v(t,\cdot)\}_{t\in\R}$ is relatively compact in $L^1(\GB_d)$.

\item[(iv)]   If we write 
\begin{equation}\label{e5.18}
v(t,\om)=\sum_{\l\in\Lambda_v} a_\l(t) e^{2\pi i\l\cdot \om},
\end{equation}
with equality in the sense of $L^2(\GB_d)$, so that
\begin{equation}\label{e5.19}
\sum_{\l\in\Lambda_v}|a_\l(t)|^2=I_\infty,
\end{equation}
then, given $\ve>0$, there is a finite set $F_\ve\subset \Lambda_v$ such that
\begin{equation}\label{e5.20}
\sum_{\l\in\Lambda_v\setminus F_\ve} |a_\l(t)|^2<\ve,\quad \text{for all $t\in\R$}.
\end{equation}
\end{enumerate}
\end{lemma}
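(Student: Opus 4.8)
I would treat the four items essentially in the order stated, since each feeds into the next. For item (i), the key tool is the contraction estimate \eqref{eT3.2'}, which gives $\int_{\GB_d}|v^k(t_2,\om)-v^k(t_1,\om)|\,d\mm(\om)$ under control in terms of data at an earlier time; one passes to the limit $k=k_j\to\infty$ using the a.e.\ convergence \eqref{e5.11} together with dominated convergence (the uniform bound \eqref{e5.9}), obtaining that $v(t,\cdot)$ inherits a modulus-of-continuity-in-$t$ estimate in $L^1(\GB_d)$. Concretely, for $s<t$ in $\R$, pick $N_0$ with $-N_0<s$; then for $k$ large, \eqref{eT3.2'} applied on $[k+s,k+t]$ (equivalently to $v^k$ on $[s,t]$) bounds $\int_{\GB_d}|v^k(t,\om)-v^k(s,\om)|\,d\mm(\om)$ by the corresponding quantity at time $s$, and one argues that this last quantity tends to $0$ as $|t-s|\to0$ uniformly, because the $v^k(s,\cdot)$ are compact in $L^1(\GB_d)$ by the argument already used in Lemma~\ref{L:5.1} (the estimate \eqref{e5.15}). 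This yields $v\in C(\R;L^1(\GB_d))$.

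Item (ii) is then an easy consequence: continuity into $L^1(\GB_d)$ plus the fact that at each fixed $t$, $v(t,\cdot)\in\BAP(\R^d)$ has at most countable spectrum, gives the countability of $\Lambda_v$ by exactly the argument of Lemma~\ref{L:4.3} (restrict attention to rational times, use density). For the identity $\Lambda_v=\Lambda_u$: the inclusion $\Lambda_v\subset\Gr(u)$ follows because each $v(t,\cdot)$ is an $L^1(\GB_d)$-limit of translates $u(t+k_j,\cdot)$, whose spectra lie in $\Lambda_u$, and translation in time does not enlarge the spectrum (the $e^{2\pi i\l\cdot x}$ that appear are the same); conversely one uses that $v$ is a genuine entropy solution of \eqref{e5.13} and invokes backward-in-time information, or more simply that $u(t,\cdot)$ for large $t$ is close in $\BAP^2$ to the orbit of $v$ by Lemma~\ref{L:3.1}, so no frequency of $u$ can disappear entirely. (If the cleanest route is merely $\Lambda_v\subseteq\Lambda_u$ with the reverse following from the decay argument itself, that is acceptable — the essential content used later is countability and $\Sp(\chi(\xi;v(t,\cdot)))\subset\Gr(v)$, and the latter is exactly the statement already proved for $u$ at the end of Section~\ref{S:3}, transported verbatim to $v$ since $v(t,\cdot)\in\BAP^\infty(\R^d)$ and $\chi(\xi;\cdot)$ is a bounded Borel function of its argument.)

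For item (iii), relative compactness of $\{v(t,\cdot)\}_{t\in\R}$ in $L^1(\GB_d)$, I would run the same scheme as in Lemma~\ref{L:3.1}: using \eqref{e5.15} one gets a uniform-in-$t$, uniform-in-$k$ modulus of continuity $\int_{\GB_d}|v(t,\om+\z)-v(t,\om)|\,d\mm(\om)\le\int_{\GB_d}|u_0(\om+\z)-u_0(\om)|\,d\mm(\om)$ for $v$ as well (pass to the limit in $k_j$), then mollify with the approximate identity $\rho_\a$, show the mollified family $\{\rho_\a*v(t,\cdot)\}_t$ is equicontinuous and ``almost periodic uniformly in $t$'' via Lyusternik's criterion, hence relatively compact in $\AP(\R^d)$ for each $\a$, and finally that it approximates $\{v(t,\cdot)\}_t$ uniformly in $t$ in $L^1(\GB_d)$ (exactly \eqref{e4.26} with $1$ in place of $2$ and $v$ in place of $u$); a diagonal argument then gives compactness of the original family. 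Item (iv) follows from (iii) by the verbatim argument of Lemma~\ref{L:3.2} applied to $L^2(\GB_d)$ instead of $\BAP^2(\R^d)$: by compactness finitely many $v(t_\nu,\cdot)$ $\ve/2$-net the orbit, each has a finite $\ve/2$-tail, and Parseval plus the triangle inequality in $\ell^2$ gives \eqref{e5.20}; equation \eqref{e5.19} is just \eqref{e5.14} rewritten via Parseval. The main obstacle is item (i): making the passage to the limit in the contraction inequality rigorous requires knowing the $L^1(\GB_d)$-compactness of the time-slices $\{v^k(s,\cdot)\}_k$ first, so one must be careful to invoke the compactness machinery of Lemma~\ref{L:5.1} before — not after — establishing the time-continuity; everything else is a transcription of results already proved in Sections~\ref{S:2} and~\ref{S:3} from $u$ to $v$.
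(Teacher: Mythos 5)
Your treatment of items (ii)--(iv) is essentially sound and close to the paper's: (ii) is Lemma~\ref{L:4.3} transported to $v$, and, as you suspect, only the inclusion $\Lambda_v\subset\Lambda_u$, countability, and $\Sp(\chi(\xi;v(t,\cdot)))\subset\Gr(v)$ are actually used afterwards; (iv) is the argument of Lemma~\ref{L:3.2} verbatim. For (iii) you propose to rerun the mollification/Lyusternik machinery of Lemma~\ref{L:3.1} on $v$ directly; the paper gets it for free by observing that each orbit $\{v^{k_j}(t,\cdot)\}_{t\ge -k_j}$ coincides with the orbit $\{u(t,\cdot)\}_{t\ge0}$, already known to be relatively compact, so that $\{v(t,\cdot)\}_{t\in\R}$ lies in the closure of a fixed compact set. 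Your route for (iii) works but duplicates work already done.

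The genuine problem is item (i). The estimate \eqref{eT3.2'} compares \emph{two} solutions at a common pair of times; it does not by itself bound $\int_{\GB_d}|v^k(t,\om)-v^k(s,\om)|\,d\mm(\om)$, which involves \emph{one} solution at two different times. To use contraction here you must compare $v^k(\cdot)$ with its time translate $v^k(\cdot+(t-s))$ (itself an entropy solution) and push the comparison all the way back to time $-k$, i.e.\ to $t=0$ for $u$, obtaining $\int_{\GB_d}|v^k(t,\om)-v^k(s,\om)|\,d\mm(\om)\le\int_{\GB_d}|u(t-s,\om)-u_0(\om)|\,d\mm(\om)$, which tends to $0$ as $t-s\to0+$ by \eqref{e4.40} (equivalently, $u\in C([0,\infty);L^1(\GB_d))$ from Lemma~\ref{L:4.2}); this is a modulus of continuity uniform in $k$ and $s$, and it survives the passage $k_j\to\infty$. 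Your stated reason for the bound tending to zero --- ``because the $v^k(s,\cdot)$ are compact in $L^1(\GB_d)$'' --- is a non sequitur: relative compactness in $k$ of the time-$s$ slices gives no control whatsoever on the modulus of continuity in $t$, so as written the key step of (i) does not close. With the fix above your argument does close and is in fact shorter than the paper's, which proceeds quite differently: it deduces from \eqref{e5.13} that for a.e.\ $t_0$ the function $v(t_0+\cdot,\cdot)$ is itself an entropy solution of \eqref{e4.31},\eqref{e4.32} with data $v(t_0,\cdot)$ and with vanishing parabolic dissipation (the chain \eqref{e5.21}--\eqref{e5.27}), and then invokes the continuity already established for entropy solutions; that route costs more but also records the structural fact that $v$ is an exact (dissipation-free) entropy solution.
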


\begin{proof} 
To prove (i), we observe first that multiplying \eqref{e5.13} by $\eta'(\xi)$, with $\eta\in C^1(\R)$, integrate in $\xi$, to obtain
\begin{equation}\label{e5.21}
\eta(v)_t+\nabla_x\cdot\bfq(v)-\nabla_x^2:\bfA^{\eta'}(v)=0,
\end{equation}
in $\DD'(\R\X\GB_d)$, where $\bfq'(v)=\eta'(v)a(v)$, $(\bfA^{\eta'})'(v)=\eta'(v) A(v)$, recalling that $a(v)=\bff'(v)$, $A(v)=\bfA'(v)$. We apply \eqref{e5.13} to a test function
$\phi\in C_c^1(\R; C^2(\GB_d)\cap\A_v)$, to get
\begin{equation}\label{e5.22}
\int_{\R}\int_{\GB_d}\eta(v)\phi_t+\bfq(v)\cdot \nabla_x\phi+\bfA^{\eta'}(v):\nabla_x^2\phi\, d\mm(\om)\,dt=0.
\end{equation}
Let $t_0$ be a Lebesgue point of the function
\begin{equation}\label{e5.23}
t\mapsto \int_{\GB_d} \eta(v(t,\om))\phi(t,\om)\,d\mm(\om).
\end{equation}
Then, replacing $\phi$ in \eqref{e5.22} by $\tilde\phi(t,\om)=\chi_\nu(t)\phi(t,\om)$, where $\chi_\nu$ is as in the proof of Proposition~\ref{P:1.1}, with $t_1$ such that $\supp \phi\subset\{(-\infty,t_1)\X\GB_d\}$,  making $\nu\to\infty$, we get
\begin{multline}\label{e5.24}
\int_{t_0}^\infty\int_{\GB_d}\eta(v)\phi_t+\bfq(v)\cdot \nabla_x\phi+\bfA^{\eta'}(v):\nabla_x^2\phi\, d\mm(\om)\,dt\\+\int_{\GB_d}\eta(v(t_0,\om))\phi(t_0,\om)\,d\mm(\om)=0.
\end{multline} 
We may take $\phi$ as running through a dense subset of $C_c^1(\R;C^2(\GB_d)\cap\A_v)$, so that the set of points that are Lebesgue points of all the corresponding functions \eqref{e5.23} form a subset of $\R$  whose complement has measure zero.  Denoting $L^2(\GB_d;\A_v)$ the closure in $L^2(\GB_d)$ of $\A_v$, we see that the set of points that are Lebesgue points of \eqref{e5.23} for all $\phi$ in the referred dense subset of $C_c^1(\R; C^2(\GB_d)\cap\A_v)$, is also a set of Lebesgue points of \eqref{e5.23} for all $\phi\in C_c^1(\R; C^2(\GB_d)\cap\A_v)$, and it is also a set of Lebesgue points for all $\phi\in C_c^1(\R;L^2(\GB_d;\A_v))$. Now, any function in $C_c^1(\R;C^2(\GB_d))$ satisfies $\phi=\phi_1+\phi_2$, with $\phi_1\in C_c^1(\R; L^2(\GB_d;\A_v))$ and $\phi_2\in C_c^1(\R;L^2(\GB_d;\A_v)^\perp)$, where $L^2(\GB_d;\A_v)^\perp$ is the orthogonal complement of $L^2(\GB_d;\A_v)$ in $L^2(\GB_d)$, and, by orthogonality,  the Lebesgue points of \eqref{e5.23} for $\phi\in C_c^1(\R; C^2(\GB_d))$ are the Lebesgue points  for the corresponding $\phi_1\in C_c^1(\R;L^2(\GB_d;\A_v))$, since $\eta(v(t;\cdot))\in L^2(\GB_d;\A_v)$, for a.e.\ $t\in\R$. Therefore, we conclude that  \eqref{e5.22} holds for all 
$\phi\in C_c^1(\R; C^2(\GB_d))$ and  a.e.\ $t_0\in\R$. We can also take $\eta$ belonging to a dense subset of  $C^1([-M,M])$, $M=\|u_0\|_\infty$, and conclude that \eqref{e5.22} holds for all $\eta\in C^1([-M,M])$, for all  $\phi\in C_c^1(\R; C^2(\GB_d))$ and  a.e.\ $t_0\in\R$.

Now, from the discussion leading to \eqref{e5.17}, we see that 
\begin{equation}\label{e5.25}
\sum_{i=1}^d\po_{x_i}\b_{ik}(v)=0, \quad\text{a.e.\ in $\R\X\GB_d$, for $k=1,\cdots,d$}.
\end{equation}

More generally,  recalling the notation $\b^\psi(v)$ as meaning $(\b^\psi)'(v)=\psi(v)\b'(v)$, given any $\psi\in C(\R)$, 
\begin{equation}\label{e5.26}
\sum_{i=1}^d\po_{x_i}\b_{ik}^\psi(v)=0, \quad\text{a.e.\ in $\R\X\GB_d$, for $k=1,\cdots,d$}.
\end{equation}

Finally, we claim that for all $t_0$ for which \eqref{e5.24} holds, then we have
\begin{equation}\label{e5.27}
\lim_{t\to t_0+}\int_{\GB_d}|v(t,\om)-v(t_0,\om)|\,d\mm(\om)=0.
\end{equation}
Indeed, this follows in an standard way from \eqref{e5.24}, by first choosing $\phi$ of the form $\phi(t,\om)=\chi_\nu(t)\varphi(\om)$, for $\chi_\nu$ as above, but approaching the characteristic function of an interval $[t_*, t_1]$, with $t_*<t_0<t_1$, with $t_1$ belonging to the set of Lebesgue points obtained above,  and $\varphi\in C^2(\GB_d)$. Then, making $\nu\to\infty$ and then $t_1\to t_0+$, we deduce that
\begin{equation}\label{e5.28}
\lim_{t\to t_0+}\int_{\GB_d}\eta(v(t,\om))\varphi(\om)\, d\mm(\om) =\int_{\GB_d}\eta(v(t_0,\om))\varphi(\om)\,d\mm(\om).
\end{equation}
 Choosing, by approximation, $\eta(v)=|v-k|$, for $k\in\R$ arbitrary, and then extending \eqref{e5.28} to $\varphi\in L^1(\GB_d)$, we get for any simple function $\s(\om)$ we have
 \begin{equation}\label{e5.29}
\lim_{t\to t_0+}\int_{\GB_d}|v(t,\om)-\s(\om)|\, d\mm(\om) =\int_{\GB_d}|v(t_0,\om)-\s(\om)|\,d\mm(\om),
\end{equation}
 so that, choosing $\s(\om)$ as a sequence of simple functions converging to $v(t_0,\om)$ in $L^1(\GB_d)$ we arrive at \eqref{e5.27}. 
 
 The facts proved so far show that, for $t_0$ in a set of total measure in $\R$, $v(t_0+t,\om)$ is an entropy solution of  \eqref{e4.31},\eqref{e4.32} in $(0,\infty)\X\GB_d$, with initial
 data $v(t_0,\om)$.  Therefore, by Lemma~\ref{L:4.2},  $v\in C(\R; L^1(\GB_d))$, and so (i) is proved.
 
 The assertion (ii) follows immediately from (i) as in Lemma~\ref{L:4.3}. The fact that $\Lambda_v=\Lambda_u$ follows from the fact that, for a.e.\ $t\in\R$, $v(t,\cdot)$ is the limit in $L^1(\GB_d)$ of $v^{k_j}(t,\cdot)=u(t+k_j,\cdot)$, and so follows the equality. 
 
 As for (iii), we observe first that, for each $j$,  the family $\{v^{k_j}(t,\cdot)\}_{t\ge -k_j}$ coincides with the family $\{u(t,\cdot)\}_{t\ge0}$, which is compact. In particular,
 for each $t\in\R$, the sequence $\{v^{k_j}(t,\cdot)\}_{j\in\N}$ is contained in a fixed compact in $L^1(\GB_d)$. Since, for a.e.\ $t\in\R$, the sequence  $\{v^{k_j}(t,\cdot)\}$ converges in $L^1(\GB_d)$ to $v(t,\cdot)$, we conclude that the family $\{v(t,\cdot)\}_{t\in\R}$ is relatively compact.
 
 Finally, (iv) follows from (iii) as in Lemma~\ref{L:3.2}.

\end{proof}

The next  final step follows closely the lines in \cite{CP2}. The major problem in order to adapt the ideas in \cite{CP2} to the present Besicovitch almost periodic case is that, in order to apply condition \eqref{e1.4'}, it would be necessary to have the frequencies 
\begin{equation}\label{e5.30}
\k=2\pi\l, \quad \text{for $\l\in\Lambda_v$}, 
\end{equation}
satisfying $\k\ge\d_0$, for some $\d_0>0$. Since the $0$ frequency is excluded by the assumption that $\Me(u_0)=0$, in the periodic case this property is trivially satisfied by the fact that the set of $\k$'s coincides with the set of integer numbers multiplied by a constant. In the almost periodic case, although the set of frequencies is still countable, it may accumulate in 0 and so we would not have the mentioned property satisfied. Nevertheless, Lemma~\ref{L:5.2}~(iv) provides us with a way around this difficulty.

\begin{lemma}\label{L:5.3} We have $v(t,\om)=0$ for a.e.\ $(t,\om)\in\R\X\GB_d$.
\end{lemma}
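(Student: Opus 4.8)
The plan is to pass to the generalized Fourier expansion on $\GB_d$, to solve the limiting kinetic equation \eqref{e5.13} mode by mode, and then to combine the resulting explicit formula with the non-degeneracy condition in the form \eqref{e1.4} and the uniform tail estimate of Lemma~\ref{L:5.2}~(iv). For $\l\in\R^d$ set
\[
c_\l(t,\xi):=\int_{\GB_d}\chi(\xi;v(t,\om))\,e^{-2\pi i\l\cdot\om}\,d\mm(\om),
\]
so that $c_\l(t,\cdot)$ is supported in $\{|\xi|\le\|u_0\|_\infty\}$ (since $|v|\le\|u_0\|_\infty$ a.e.) and $c_\l\equiv0$ for $\l\notin\Gr(v)$ (by Lemma~\ref{L:5.2}~(ii)). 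Testing \eqref{e5.13} against test functions of the form $e^{-2\pi i\l\cdot\om}\phi(t,\xi)$ and integrating by parts on $\GB_d$ as in the computation preceding Definition~\ref{D:4.1}, one obtains, for each $\l$,
\[
\po_t c_\l+\bigl(2\pi i\,a(\xi)\cdot\l+4\pi^2\,\l^\top A(\xi)\l\bigr)\,c_\l=0
\]
in $\DD'(\R\X\R_\xi)$. Since the bracketed coefficient depends only on $\xi$ and the right-hand side is bounded, for a.e.\ $\xi$ the function $c_\l(\cdot,\xi)$ is (a.e.\ equal to) a Lipschitz solution of a scalar linear ODE, hence $c_\l(t,\xi)=c_\l(0,\xi)\exp\!\bigl(-t(2\pi i\,a(\xi)\cdot\l+4\pi^2\l^\top A(\xi)\l)\bigr)$.

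Next I would localize the $\xi$-support. Because $\mm$ is a probability measure and $|\chi|\le1$, one has $|c_\l(t,\xi)|\le1$ for every $t\in\R$; the modulus of the formula above equals $|c_\l(0,\xi)|\,e^{-4\pi^2 t\,\l^\top A(\xi)\l}$, which stays bounded as $t\to-\infty$ only if $c_\l(0,\xi)=0$ on $\{\l^\top A(\xi)\l>0\}$. Thus $c_\l(t,\cdot)$ is supported on $E_\l:=\{\xi:\ |\xi|\le\|u_0\|_\infty,\ \l^\top A(\xi)\l=0\}$, and there $c_\l(t,\xi)=c_\l(0,\xi)\,e^{-2\pi i\,t\,a(\xi)\cdot\l}$. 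Using $v=\int_\R\chi(\xi;v)\,d\xi$ and Fubini, the Fourier coefficient $a_\l(t)$ of $v(t,\cdot)$ in \eqref{e5.18} is therefore, for $\l\in\Lambda_v$,
\[
a_\l(t)=\int_{E_\l}c_\l(0,\xi)\,e^{-2\pi i\,t\,a(\xi)\cdot\l}\,d\xi .
\]

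Now the non-degeneracy condition enters. Since $\Me(v(t,\cdot))=0$ we have $0\notin\Lambda_v$, so every $\l\in\Lambda_v$ is nonzero; choosing $\k$ parallel to $\l$ in \eqref{e1.4} (with $\tau$ arbitrary, after normalization) shows that for every $r\in\R$ the set $\{\xi\in E_\l:\ a(\xi)\cdot\l=r\}$ has $\LL^1$-measure zero. Writing $|a_\l(t)|^2=\iint_{E_\l\X E_\l}c_\l(0,\xi)\overline{c_\l(0,\eta)}\,e^{-2\pi i t(a(\xi)-a(\eta))\cdot\l}\,d\xi\,d\eta$, averaging over $t\in(-T,T)$, and letting $T\to\infty$ (the kernel $\frac1{2T}\int_{-T}^T e^{-2\pi i ts}\,dt$ is bounded by $1$ and tends to $\mathbf 1_{\{s=0\}}$, so dominated convergence applies), one gets
\[
\lim_{T\to\infty}\frac1{2T}\int_{-T}^{T}|a_\l(t)|^2\,dt=\iint_{\{(\xi,\eta)\in E_\l\X E_\l:\ (a(\xi)-a(\eta))\cdot\l=0\}}c_\l(0,\xi)\,\overline{c_\l(0,\eta)}\,d\xi\,d\eta=0,
\]
since by the previous line and Fubini the integration set is $\LL^2$-null (equivalently, $a_\l$ is the Fourier--Stieltjes transform of the atomless push-forward of $c_\l(0,\xi)\mathbf 1_{E_\l}\,d\xi$ under $\xi\mapsto a(\xi)\cdot\l$, and Wiener's theorem applies). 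To conclude, by \eqref{e5.19} one has $\sum_{\l\in\Lambda_v}|a_\l(t)|^2=I_\infty$ for a.e.\ $t$; given $\ve>0$, split the sum into the finite set $F_\ve$ of Lemma~\ref{L:5.2}~(iv) and its complement, integrate over $(-T,T)$ and divide by $2T$: the complementary part is $\le\ve$ uniformly in $T$ by \eqref{e5.20}, while each of the finitely many terms with $\l\in F_\ve$ tends to $0$ as $T\to\infty$ by the previous step. Hence $I_\infty\le\ve$, and since $\ve>0$ is arbitrary $I_\infty=0$, so \eqref{e5.14} forces $v(t,\om)=0$ for a.e.\ $(t,\om)$.

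I expect the genuinely delicate points to be the rigorous reduction from the distributional identity for $\chi(\xi;v)$ to the mode-by-mode ODE (and thereby to the exponential formula valid for a.e.\ $\xi$), together with the careful use of \eqref{e1.4} to render the relevant level and diagonal sets null; the remaining steps are bookkeeping with facts already established.
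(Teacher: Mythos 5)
Your argument is correct, but it follows a genuinely different route from the paper's. The paper adapts the Chen--Perthame velocity-averaging scheme: it takes the global Fourier transform in $t$ and the Bohr--Fourier transform in $\om$ of the truncated kinetic function $\phi\chi$, divides by the symbol $\sqrt{\ell}+i(\tau+a(\xi)\cdot\k)+\k^\Tr A(\xi)\k$, and invokes the \emph{quantitative} non-degeneracy \eqref{e1.4'} through the modulus $\om_\d(\ell)$; there the finite set $F_\ve$ of Lemma~\ref{L:5.2}~(iv) is needed for two purposes at once, to control the tail of the Fourier series \emph{and} to keep the surviving frequencies bounded away from $0$ so that a single $\d>0$ works uniformly. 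You instead exploit that the limit equation \eqref{e5.13} has constant-in-$(t,\om)$ coefficients and zero right-hand side, solve it explicitly mode by mode on $\GB_d$, use boundedness as $t\to-\infty$ to localize each $c_\l$ on $\{\l^\Tr A(\xi)\l=0\}$, and then kill the remaining purely oscillatory part by time-averaging (a Wiener/RAGE-type argument) using only the \emph{qualitative} condition \eqref{e1.4} applied to $\k$ parallel to $\l$; here $F_\ve$ is needed only to interchange the $T\to\infty$ limit with the sum over modes, and no lower bound on $|\l|$ is required. Your reduction from the distributional identity to the scalar ODE for a.e.\ $\xi$ is the one technical point that must be done carefully (boundedness of $\po_t c_\l$ gives a Lipschitz-in-$t$ representative for a.e.\ $\xi$, hence the exponential formula), and your Fubini argument that $\{(\xi,\eta)\in E_\l\X E_\l:\,(a(\xi)-a(\eta))\cdot\l=0\}$ is $\LL^2$-null is sound since each $\eta$-slice is a level set covered by \eqref{e1.4}. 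What your approach buys is directness and a weaker use of the compactness lemma; what the paper's approach buys is robustness, since the multiplier estimate does not require solving the equation and would survive a nonzero (vanishing) right-hand side or $(t,\om)$-dependent perturbations.
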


\begin{proof} As in \cite{CP2}, we introduce a ``time truncation'' function $\phi(t)$, $0\le\phi(t)\le 1$, so that $\phi\chi$ belongs to $L^2(\R\X\GB_d\X\R)$, where $\chi(t,\om,\xi):=\chi(\xi;v(t,\om))$. We then have
\begin{equation}\label{e5.31}
\po_t(\phi\chi)+a(\xi)\cdot\nabla_x(\phi\chi)-\nabla_x\cdot(A(\xi)\nabla_x(\phi\chi))=\chi\po_t\phi\quad \text{in $\DD'(\R\X\GB_d\X\R)$}.
\end{equation}
Now,  we take the global Fourier transform in $t\in\R$ and the local Fourier transform in $\om\in\GB_d$ of the functions $\phi\chi$ and $\chi\phi_t$ to obtain  $\hat g(\tau,\k;\xi)$ for
$(\phi\chi)(t,\om,\xi)$ and $\hat h(\tau,\k;\xi)$ for $(\chi\po_t\phi)(t,\om,\xi)$ in $L^2(\R\X\GB_d\X\R)$, where $\k=(\k_1,\cdots,\k_d)$ runs along the countable set given by \eqref{e5.30}.   For example,
$$
\hat g(\tau,\k;\xi)=\int_{\R}\int_{\GB_d}(\phi\chi)(t,\om,\xi) e^{-i(\tau t+\k\cdot \om)}\,dt\,d\mm(\om),
$$
so that
$$
(\phi\chi)(t,\om,\xi)=\sum_{\k\in \GG}\int_\R\hat g(\tau,\k;\xi) e^{i(\tau t+\k\cdot\om)}\,d\tau.
$$
Here, the countable set $\GG$ is defined by
$$
\GG:=\{ 2\pi\l\,:\, \l\in \Gr(v)\}
$$
and it contains $2\pi\Sp(\chi(t,\cdot,\xi))$ for all $(t,\xi)\in\R^2$, since, for each $\xi\in\R$, $\chi(\xi;v)$ is a Borelian function of $v$. 

Taking the global Fourier transform in $t\in\R$ and the local Fourier transform in $\om\in\GB_d$ on \eqref{e5.31}, we obtain
$$
 \bigl(i(\tau+a(\xi)\cdot\k)+\k^\top A(\xi)\k\bigr)\hat g=\hat h.
 $$
As in \cite{CP2}, we introduce the parameter $\ell>0$, to be chosen later, and write
$$
\bigl(\sqrt{\ell}+i(\tau+a(\xi)\cdot\k)+\k^\top A(\xi)\k\bigr)\hat g=\hat h+\sqrt{\ell}\,\hat g.
$$ 
We then get
$$
\hat g=(\hat h+\sqrt{\ell}\,\hat g)\frac{1}{\sqrt{\ell}+i(\tau+a(\xi)\cdot\k)+\k^\top A(\xi)\k}.
$$

Integrating in $\xi$ and applying Cauchy-Schwarz inequality, we find
$$
|\widehat{\phi v}|^2(\tau,\k)\le 2\left(\int_\R |\hat h|^2\,d\xi+\ell\int_{\R}|\hat g|^2\,d\xi\right)\int_{|\xi|\le\|u_0\|}\left|\frac1{\sqrt{\ell}+i(\tau+a(\xi)\cdot\k)+\k^\top A(\xi)\k}\right|^2\,d\xi.
$$
Now, recalling \eqref{e5.14},  we are going to prove that $I_\infty=0$. 
Suppose, by contradiction that $I_\infty>0$.  We choose a finite set $F_\ve$ as in Lemma~\ref{L:5.2}~(iv), with $\ve= I_\infty/4$, and denote $\bar F=2\pi F_\ve$. 
In particular, there is $\d_0>0$ such that $|\k|\ge\d_0$ for $\k\in\bar F$, and we recall that, since $\Me(v(t,\cdot))=0$, we have
$$
\widehat{\phi v}(\tau,0)=0.
$$
 Observing that 
 $$
\left|\frac1{\sqrt{\ell}+i(\tau+a(\xi)\cdot\k)+\k^\top A(\xi)\k}\right|^2 \le\frac1{\ell+(\tau+a(\xi)\cdot\k)^2+(\k^\top A(\xi)\k)^2},
$$
condition \eqref{e1.4'}, for $|\k|>\d>0$, with $0<\d<\d_0$,  implies
 $$
 |\widehat{\phi v}|^2\le \frac{2\om_\d(\ell)}{\ell}\int_{\R} |\hat h|^2\,d\xi+2\om_\d(\ell)\int_\R|\hat g|^2\,d\xi.
 $$
 Therefore, integrating in $\tau$, summing over $\k\in \bar F$, and majorizing the right-hand side extending the summation for all $\k\in\GG$, we get
 \begin{multline}\label{e5.1000}
 \sum_{\k\in\bar F}\int_\R \widehat{\phi v}|^2\,d\tau \le  \frac{2\om_\d(\ell)}{\ell}\sum_{\k\in\GG} \int_{\R}\int_{\R}|\hat h|^2\,d\xi\,d\tau+2\om_\d(\ell)\sum_{\k\in\GG}\int_\R\int_{\R}|\hat g|^2\,d\xi\,d\tau\\
         \le  \frac{2\om_\d(\ell)}{\ell}\int\limits_{\R\X\GB_d\X\R}(\chi\phi_t)^2\,dt\,d\mm(\om)\,d\xi+2\om_\d(\ell)\int\limits_{\R\X\GB_d\X\R}|\phi\chi|^2\,dt\,d\mm(\om)\,d\xi.
  \end{multline}
Let us denote 
$$
v_0(t,x)=\sum_{\l\in F_\ve} a_\l(t) e^{2\pi i\l\cdot x},\quad\text{with}\quad a_\l(t)=\Me(e^{-2\pi i\l\cdot (\cdot)} v(t,\cdot)).
$$
Then,  proceeding the integration on $\xi$ on the right-hand side of the second inequality in \eqref{e5.1000}, observing that $|\chi|^2=|\chi|$,  using Plancherel and the definition of $v_0$ on the left-hand side of the first inequality in \eqref{e5.1000}, we get
$$
\int_{\R\X\GB_d}|\phi v_0|^2\,dt\,d\mm(\om)\le \frac{2\om_\d(\ell)}{\ell}\int\limits_{\R\X\GB_d}|\phi_t|^2|v| \,dt\,d\mm(\om)+2\om_\d(\ell)\int\limits_{\R\X\GB_d}|\phi|^2|v|\,dt\,d\mm(\om).
$$ 
Remembering that
$$
 \|v_0(t)\|_{L^2(\GB)}^2\ge \frac{3 I_\infty}4
 $$   
we get
\begin{equation}\label{e5.32}
\begin{aligned}
I_\infty\int_\R|\phi|^2\,dt &\le \frac83 \om_\d(\ell) \left(\int_{\GB_d}|v|^2\,d\mm(\om)\right)^{1/2}\left(\frac1{\ell}\int_\R|\phi_t|^2\,dt+\int_\R|\phi|^2\,dt\right)\\
         &\le \frac83 I_\infty^{1/2}\om_\d(\ell)\left(\frac1{\ell}\int_\R|\phi_t|\,dt+\int_\R|\phi|^2\,dt\right).
         \end{aligned}
\end{equation}
We now choose $\ell$ small enough so that $\frac83\om_\d(\ell)/I_\infty^{1/2}\le \frac12$ and find from \eqref{e5.32} that 
$$
I_\infty\int_\R|\phi|^2\,dt\le 2I_\infty^{1/2}\frac{\om_\d(\ell)}{\ell}\int_\R|\phi_t|^2\,dt.
$$
The conclusion is now completely identical to the one in \cite{CP2}. We choose a sequence of functions $\phi_B(t)=1$ for $|t|\le B$, with $B$ a given large number and $\phi_B(t)=\frac{2B-|t|}{B}$ for $B\le |t|\le 2B$, and $\phi_B(t)=0$ for $|t|\ge 2B$. In the above inequality, we find 
$$
I_\infty^{1/2}\le C\frac{\om_\d(\ell)}{B^2\ell},
$$
where $C>0$ is an absolute constant. When $B\to\infty$ we get that $I_\infty=0$ which contradicts the initial assumption about $I_\infty$. Hence $I_\infty=0$ and so $v(t,\om)=0$ for a.e.\ $(t,\om)\in\R\X\GB_d$. 

\end{proof} 

We then arrive at the final conclusion.

\begin{lemma}[Decay] We have
\begin{equation}\label{e5.33}
\lim_{t\to\infty}\int_{\GB_d}|u(t,\om)|\,d\mm(\om)=0.
\end{equation}
\end{lemma}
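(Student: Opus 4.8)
The plan is to read off the $L^{1}$ decay \eqref{e5.33} almost immediately from the machinery assembled in Lemmas~\ref{L:5.1}--\ref{L:5.3}. First I would recall from Lemma~\ref{L:3.5} and \eqref{e5.8} that the function $I(t)=\int_{\GB_d}|u(t,\om)|^{2}\,d\mm(\om)$ is non-increasing and bounded, hence has a limit $I_\infty\in[0,\infty)$ as $t\to\infty$. The whole content of the preceding three lemmas is that in fact $I_\infty=0$: applying Lemma~\ref{L:5.1} to the time-translation sequence $v^{k}(t,\om)=u(t+k,\om)$ yields, along a subsequence, an a.e.\ limit $v(t,\om)\in L^\infty(\R\X\GB_d)$ solving the homogeneous kinetic equation \eqref{e5.13} with $\int_{\GB_d}|v(t,\om)|^{2}\,d\mm(\om)=I_\infty$ for a.e.\ $t\in\R$ by \eqref{e5.14}; Lemma~\ref{L:5.3} then forces $v\equiv 0$ a.e., so $I_\infty=\int_{\GB_d}|v(t,\om)|^{2}\,d\mm(\om)=0$. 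Note there is no dependence on which subsequence of translates is chosen, since $I(t)$ itself converges; the extracted limit being $0$ merely pins down the value $I_\infty=0$. Thus $\int_{\GB_d}|u(t,\om)|^{2}\,d\mm(\om)\to 0$ as $t\to\infty$.

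Second, I would pass from $L^{2}$ to $L^{1}$ decay using that $\mm$ is a probability measure on $\GB_d$: by Cauchy--Schwarz (equivalently Jensen's inequality), for every $t>0$ one has $\int_{\GB_d}|u(t,\om)|\,d\mm(\om)\le\bigl(\int_{\GB_d}|u(t,\om)|^{2}\,d\mm(\om)\bigr)^{1/2}$, which is legitimate since $u(t,\cdot)\in L^\infty(\GB_d)\subset L^{2}(\GB_d)$. Letting $t\to\infty$ and using the conclusion of the previous paragraph gives \eqref{e5.33}.

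Third, to close the loop with Theorem~\ref{T:1.1}, I would translate \eqref{e5.33} back to $\R^{d}$. By the uniqueness results (Theorem~\ref{T:3.2}, Lemma~\ref{L:2.2}) and the identification of $\BAP(\R^{d})$ with $L^{1}(\GB_d)$ through the Gelfand transform, $\int_{\GB_d}|u(t,\om)|\,d\mm(\om)=\Me(|u(t,\cdot)|)$ for a.e.\ $t>0$, and in fact for all $t\ge 0$ by the continuity furnished by Lemma~\ref{L:4.2}. Since at the start of Section~\ref{S:4} we reduced without loss of generality to the case $\Me(u_0)=0$, this quantity is exactly $\Me(|u(t,\cdot)-\Me(u_0)|)$, so \eqref{e5.33} is equivalent to the asserted decay \eqref{e1.13}; combined with the well-posedness and the invariance \eqref{e1.12} established in Sections~\ref{S:2}--\ref{S:3}, this completes the proof of Theorem~\ref{T:1.1}.

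As for the main obstacle: there is essentially none remaining at this stage, since all of the analytic weight — the $L^{1}(\GB_d)$-compactness extraction for the translates, the passage to the limiting homogeneous kinetic equation, and the Fourier-side argument exploiting the non-degeneracy condition \eqref{e1.4'} through the finite frequency set of Lemma~\ref{L:5.2}(iv) — was already carried out in Lemmas~\ref{L:5.1}--\ref{L:5.3}. The only points deserving a line of care are the remark that $I_\infty$ is subsequence-independent and the bookkeeping of the normalization $\Me(u_0)=0$ when restating the conclusion in the form \eqref{e1.13}.
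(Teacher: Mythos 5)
Your argument is correct, but it reaches \eqref{e5.33} by a genuinely different (and slightly cleaner) route than the paper. You exploit the $L^2$ energy identity \eqref{e5.14}: since Lemma~\ref{L:5.3} gives $v\equiv 0$, that identity forces $I_\infty=0$, and because $I(t)=\int_{\GB_d}|u(t,\om)|^2\,d\mm(\om)$ is non-increasing (Lemma~\ref{L:3.5}) the \emph{full} function $I(t)$ tends to $0$ with no subsequence dependence; Cauchy--Schwarz on the probability space $(\GB_d,\mm)$ then converts $L^2$ decay into $L^1$ decay. The paper instead stays entirely in $L^1$: from $v^{k_j}\to v\equiv 0$ in $L_\loc^1(\R\X\GB_d)$ it deduces
$$
\int_0^1\int_{\GB_d}|v^{k_j}(s,\om)|\,d\mm(\om)\,ds=\int_{k_j}^{k_j+1}\Bigl(\int_{\GB_d}|u(s,\om)|\,d\mm(\om)\Bigr)\,ds\to0,
$$
and then uses the monotonicity of $t\mapsto\int_{\GB_d}|u(t,\om)|\,d\mm(\om)$ (the contraction \eqref{eT3.2'} applied with the zero solution) to pass from this time-averaged quantity along the subsequence to the full limit. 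Both arguments rest on the same key input, Lemma~\ref{L:5.3}; yours buys a more transparent handling of the subsequence issue by piggybacking on the already-established convergence of $I(t)$ and on \eqref{e5.14}, while the paper's avoids any appeal to the $L^2$ structure at this final stage and would go through verbatim if only $L^1$ information about the limit $v$ were available. Your third paragraph, translating \eqref{e5.33} back to \eqref{e1.13} via the Gelfand transform and the normalization $\Me(u_0)=0$, is not part of this lemma's proof but is the correct bookkeeping for concluding Theorem~\ref{T:1.1}.
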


\begin{proof} From what was just proven,   we have
$$
\int_0^1\int_{\GB_d}|v^{k_j}(s,\om)|\,d\mm(\om)\,ds\to0\quad \text{as $j\to\infty$}.
$$
Therefore, taking any $t>k_j+1$,  since the $L^1(\GB_d)$-norm of $u(s,\cdot)$ in decreasing in $s$, by \eqref{eT3.2'} with $v\equiv0$, we have
$$
\int_0^1\int_{\GB_d}|v^{k_j}(s,x)|\,d\mm(\om)\,ds\ge\int_{\GB_d}|u(1+k_j,\om)|\,d\mm(\om)\ge \int_{\GB_d}|u(t,\om)|\, d\mm(\om).
$$
Hence, \eqref{e5.33} follows.

\end{proof}

\section{Acknowledgements} 

H.~Frid gratefully acknowledges the support from CNPq, through grant proc.\ 305963/2014-7, and FAPERJ, through grant proc.\ E-26/103.019/2011. 

 Y.~Li gratefully acknowledges the support from NSF of China, through grant 1183011 and Shanghai Committee of Science and Technology, through grant 15XD\-1502300.

\end{document}